\theoremstyle{plain}
\newtheorem{thm}{Theorem}[section]
\newtheorem{pro}[thm]{Proposition}
\newtheorem{lem}[thm]{Lemma}
\newtheorem{cor}[thm]{Corollary}
\theoremstyle{definition}
\newtheorem{dfn}[thm]{Definition}
\newtheorem{nt}[thm]{Notation}
\newtheorem{conv}[thm]{Convention}
\newtheorem{dfnlm}[thm]{Definition-Lemma}
\newtheorem{rem}[thm]{Remark}
\newtheorem{exa}[thm]{Example}
\newtheorem{setup}[thm]{Set up}
\theoremstyle{remark}
\newcommand{\lto}{\longrightarrow}
\newcommand{\Z}{\mathbb{Z}}
\newcommand{\N}{\mathbb{N}}
\newcommand{\C}{\mathbb{C}}
\newcommand{\R}{\mathbb{R}}
\newcommand{\Q}{\mathbb{Q}}
\newcommand{\PS}{\mathbb{P}}
\newcommand{\F}{\mathbb{F}}
\newcommand{\vphi}{\varphi}
\newcommand{\dashto}{\dashrightarrow}
\newcommand{\mcal}{\mathcal}
\DeclareMathOperator{\Ner}{Ner}
\DeclareMathOperator{\Ob}{Ob}
\DeclareMathOperator{\Mor}{Mor}
\DeclareMathOperator{\id}{Id}
\DeclareMathOperator{\inte}{int}
\DeclareMathOperator{\Bs}{Bs}
\DeclareMathOperator{\Exc}{Exc}
\DeclareMathOperator{\Supp}{Supp}
\DeclareMathOperator{\Fix}{Fix}
\DeclareMathOperator{\Div}{Div}
\DeclareMathOperator{\rest}{Res}
\DeclareMathOperator{\resi}{resi}
\DeclareMathOperator{\Cone}{Cone}
\DeclareMathOperator{\Proj}{Proj}
\DeclareMathOperator{\Aut}{Aut}
\DeclareMathOperator{\Mob}{Mob}
\DeclareMathOperator{\Pic}{Pic} 
\DeclareMathOperator{\W}{Div}
\DeclareMathOperator{\Nef}{Nef}
\DeclareMathOperator{\Amp}{Amp}
\DeclareMathOperator{\Effb}{\overline{\mathrm{Eff}}}
\DeclareMathOperator{\Eff}{\mathrm{Eff}}
\DeclareMathOperator{\B}{Big}
\DeclareMathOperator{\ddiv}{div}
\DeclareMathOperator{\sB}{\mathbf{B}}
\begin{document}
\title[Relations in the Sarkisov Program]{Relations in the Sarkisov Program}

\author{Anne-Sophie Kaloghiros}
\address{Department of Mathematics, Imperial College London, 180 
Queen's Gate, London SW7 2AZ, UK}
\email{a.kaloghiros@imperial.ac.uk}
\thanks{The author is supported by the Engineering and Physical Sciences Research Council [grant number EP/H028811/1]. Part of this research was carried out while I was visiting the University of Illinois at Chicago. I wish to thank A. Corti for suggesting this problem, P. Cascini, S. Lamy and V. Lazi\'c for valuable suggestions. While working on this article, I attended a talk by Professor Shokurov in which similar structures for Sarkisov relations were announced. It is my pleasure to acknowledge Professor Shokurov's influence on this work, however my results are slightly different and not as strong as those he announced.}
\begin{abstract}The Sarkisov Program studies birational maps between varieties that are end products of the Minimal Model Program (MMP) on nonsingular uniruled varieties. If $X$ and $Y$ are terminal $\Q$-factorial projective varieties endowed with a structure of Mori fibre space, a birational map 
$f\colon X\dashto Y$ is the composition of a finite number of \emph{elementary Sarkisov links}. This decomposition is in general not unique: two such define a \emph{relation in the Sarkisov Program}. 
I define \emph{elementary relations}, and show they generate relations in the Sarkisov Program. Roughly speaking, elementary relations are the relations among the end products of suitable relative MMPs of $Z$ over $W$ with $\rho(Z/W)=3$.  
\end{abstract}
\maketitle
 \section{Introduction}\label{intro}
 Let $Z$ be a nonsingular projective variety. Conjecturally, the \emph{Minimal Model Program (MMP) terminates} for $Z$, and there is a finite sequence of elementary birational operations \emph{directed by $K_Z$} 
\[\vphi\colon Z=X_0\dashto X_1 \dashto \dots \dashto X_n=X,\]
where $X$ is terminal and $\Q$-factorial, and one of:
\begin{enumerate}
\item[1.] \emph{$X$ is a minimal model}: $K_X$ is nef, 
\item[2.] \emph{$X$ is a Mori fibre space}: $X$ is endowed with a fibration morphism $p\colon X\to S$ such that $\rho(X/S)=1$, and ${-}K_X$ is $p$-ample.
\end{enumerate}
The map $\vphi$ is \emph{the result of a $K_Z$-MMP}, and $X$ is a \emph{distinguished representative} in the birational equivalence class of $Z$. However, neither $\vphi$ nor $X$ are unique. Since the birational classification of varieties is one of the primary goals of the MMP, it is natural to study birational maps between distinguished representatives that are results of the $K_Z$-MMP. The type of $X$ (minimal model or Mori fibre space) depends on whether $Z$ is uniruled or not, so that one only needs to consider birational maps between minimal models or between Mori fibre spaces. In fact, if $X/S$ and $Y/T$ are birational Mori fibre spaces, there is a smooth projective variety $Z$ equipped with morphisms $Z\lto X$ and $Z\lto Y$ that are the results of $K_Z$-MMPs. 
The \emph{Sarkisov Program} studies birational Mori fibre spaces, or, equivalently, birational maps between distinguished models produced by the MMP on nonsingular uniruled varieties.

More generally, one can ask whether birational maps between distinguished representatives produced by the MMP on a nonsingular variety $Z$ (or by the log-MMP on a klt pair $(Z, \Delta)$) admit any particular structure. Since any result of a $K_Z$-MMP is the composition of a finite number of elementary maps (contractions of extremal rays), any such birational map $X\dashto Y$ is the composition of a finite number of elementary steps $\vphi_k\colon X_k\dashto X_{k+1}$ for $k=0, \dots , N$, where $X_0\simeq X$, $X_N\simeq Y$ and for each $k$, $\vphi_k$ or $\vphi_k^{-1}$ is the contraction of an extremal ray.   A drawback of this tautological decomposition is that the intermediate varieties $X_k$ are not in general distinguished representatives.  The following two theorems give a decomposition where each intermediate step is a birational map between minimal models or between Mori fibre spaces. 
\begin{thm}\cite{Kaw08}\label{thm:0.1}
A birational map $f\colon X\dashto Y$ between minimal models is the composition of a finite number of flops. 
\end{thm}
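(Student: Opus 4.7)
The plan is to use a common resolution together with an MMP with scaling. Start by taking a smooth projective variety $W$ with birational morphisms $p\colon W\to X$ and $q\colon W\to Y$ resolving $f$. Because both $K_X$ and $K_Y$ are nef and $X,Y$ are terminal and $\Q$-factorial, the negativity-of-contraction lemma applied to both sides of $p^*K_X-q^*K_Y$ (which is $p$-nef and $q$-antinef, or vice versa) forces $p^*K_X=q^*K_Y$. In particular, $X$ and $Y$ are $K$-equivalent, so the birational map $f$ is an isomorphism in codimension one.

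Next, I would transport a suitable polarization from $Y$ to $X$. Choose an ample divisor $A$ on $Y$ and let $A_X=f^{-1}_*A$ be its strict transform on $X$. For $0<\epsilon\ll 1$, the pair $(X,\epsilon A_X)$ is klt (since $X$ is terminal), and $K_X+\epsilon A_X$ has the same numerical behaviour on $K_X$-trivial curves as $\epsilon A_X$ does. The idea is to run a $(K_X+\epsilon A_X)$-MMP over $\Spec k$, with scaling of an ample divisor, and check that its steps are all flops in the usual sense.

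The central observation is that because $K_X$ is nef, any $(K_X+\epsilon A_X)$-negative extremal ray $R$ must satisfy $K_X\cdot R=0$: otherwise $K_X\cdot R>0$ would give $(K_X+\epsilon A_X)\cdot R>0$ for $\epsilon$ small (using boundedness of extremal lengths in the Cone Theorem). So each step of the MMP is $K_X$-trivial; it cannot be a divisorial contraction (that would contract a $K_X$-negative curve, contradicting nefness of $K_X$) and it cannot be a Mori fibre contraction (for the same reason). Hence each step is a $K_X$-trivial small modification, i.e.\ a flop, and nefness of $K$ is preserved after each flop so the argument iterates.

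Termination and identification of the output is the last and most delicate point. Existence and termination of the $(K_X+\epsilon A_X)$-MMP with scaling for the klt pair $(X,\epsilon A_X)$ with big boundary follows from BCHM, producing a minimal model $X'$ on which $K_{X'}+\epsilon A_{X'}$ is nef. Because each step was an isomorphism in codimension one and $K_X$-trivial, $p^*K_X=q^*K_Y$ passes through $X'$ and $A_{X'}$ is the strict transform of $A$. Since $A$ is ample on $Y$ and $K_Y+\epsilon A$ is ample there, the identity on the open locus where the composition $X'\dashto Y$ is an isomorphism extends by the base-point-free theorem (or uniqueness of log canonical models for big boundaries) to a genuine isomorphism $X'\simeq Y$. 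The main obstacle is precisely this last point — checking that the $(K_X+\epsilon A_X)$-MMP, which is globally controlled by the ample class $A_X$, actually terminates \emph{at} $Y$ rather than at some other minimal model; this is where one must invoke the finite generation/uniqueness theorems of BCHM rather than just the abstract existence of a decomposition into flops.
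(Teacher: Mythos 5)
The paper does not give its own proof of this theorem; it simply cites Kawamata's 2008 paper, in which the result was established. So I am comparing your sketch with Kawamata's argument (which is what the citation delegates to), not with a proof in the paper itself.

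Your outline is essentially Kawamata's: use negativity to show $p^*K_X=q^*K_Y$, deduce that $f$ is small, transport an ample class from $Y$, run a $(K_X+\epsilon A_X)$-MMP with scaling, observe that the steps are $K$-trivial, and identify the output as $Y$ by uniqueness of ample models. That is the right strategy, and the first two paragraphs are fine. The concern is with the justification in the third paragraph, where you claim that any $(K_X+\epsilon A_X)$-negative extremal ray $R$ has $K_X\cdot R=0$ \emph{because of boundedness of extremal lengths}. That is not the right tool: the cone theorem bounds $-(K_X+\epsilon A_X)\cdot C$ for a generator $C$ of $R$, but says nothing about bounding $K_X\cdot C$. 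Combined with nefness you only get $0\le K_X\cdot C < -\epsilon A_X\cdot C$, and $-A_X\cdot C$ is not a priori bounded, so no contradiction arises directly. What actually works is the \emph{finiteness} of $(K_X+\epsilon_0 A_X)$-negative extremal rays for a fixed $\epsilon_0$ (valid because $\epsilon_0 A_X$ is big and the pair is klt), which lets you shrink $\epsilon<\epsilon_0$ until every ray with $K_X\cdot R>0$ has been switched off; since shrinking $\epsilon$ only removes negative rays, this is well-defined. That said, even with this fix there is a second, genuine issue you wave away with ``the argument iterates'': after a flop you are on a new model $X_1$, whose set of $(K_{X_1}+\epsilon A_1)$-negative extremal rays may differ, and a single value of $\epsilon$ must work for the entire MMP run. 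One needs either a uniform choice of $\epsilon$ across all models (which Kawamata arranges using the finiteness statements coming from \cite{BCHM}, cf.\ the finiteness of models/ample models that underlies the geography used throughout the present paper), or one must restart the MMP after each flop with a possibly smaller $\epsilon$ and appeal to finiteness of minimal models to guarantee termination. Either way, the step you treat as routine is exactly where the non-trivial input lies, and it should not be passed over with a single clause.

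One smaller point: once you know each step is an isomorphism in codimension one and $K$-trivial, you do not need to argue separately that divisorial contractions and Mori fibre contractions are excluded by nefness of $K$; the $K$-triviality itself, together with the relative Picard rank $1$ contraction, already forces the step to be a flop once you know it is small. Your version is not wrong, but the ``same reason'' you invoke for Mori fibrations is actually slightly different (a Mori fibre contraction is excluded because $K_X+\epsilon A_X$ is big for small $\epsilon$, not because of nefness of $K_X$ per se), so it is worth being precise there as well.
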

\begin{thm}\cite{HM09}\label{thm:0.2}
A birational map  $f\colon X/S\dashto Y/T$ between Mori fibre spaces is the composition of a finite number of elementary Sarkisov links. 
\end{thm}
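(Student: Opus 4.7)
\emph{Proof proposal.} The plan is to realize $f\colon X/S\dashto Y/T$ as the birational history of a relative minimal model program with scaling, carried out on a common resolution, and then to interpret each critical value of the scaling parameter as an elementary Sarkisov link. Concretely, I would first choose a smooth projective $W$ together with birational morphisms $p\colon W\to X$ and $q\colon W\to Y$, a very general ample $\Q$-divisor $A$ on $Y$, and set $H=q^*A$. For $\epsilon>0$ small the pair $(Y,\epsilon A)$ is terminal, and I would build a klt boundary $\Gamma$ on $W$ whose pushforward under $q$ recovers $(Y,\epsilon A)$ and with respect to which $K_W+\Gamma$ fails to be pseudo-effective over a common base $B$ dominated by both $S$ and $T$. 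Running the $(K_W+\Gamma)$-MMP over $B$ with scaling of $H$ then, by BCHM, terminates, and provided the genericity choices are made correctly it terminates at the Mori fibre space $Y/T$.

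Let $1\geq t_1>t_2>\cdots>t_N\geq 0$ be the critical values of the scaling parameter. Between consecutive critical values the current model is $\Q$-factorial and terminal with a Mori fibre space structure over some base $S_i$, producing a sequence $X=X_0/S_0,\,X_1/S_1,\,\dots,\,X_N/S_N=Y/T$. At each critical value $t_i$ one can find a base $R_i$ over which $\NEb(X_i/R_i)$ has rank two, with one of its extremal rays giving the fibration $X_i\to S_i$ and the other initiating the next MMP step; the two-ray game over $R_i$ then identifies $X_i/S_i\dashto X_{i+1}/S_{i+1}$ as one of the four types of elementary Sarkisov link, depending on whether the contractions of the two rays are divisorial or of fibre type.

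The main technical obstacle is termination, namely showing that the MMP with scaling produces only finitely many critical values and that between consecutive critical values only finitely many flips occur. This is exactly where the full strength of BCHM enters: existence of minimal models, termination of MMP with scaling for klt pairs with big boundary, and finite generation of the relevant relative canonical rings. A secondary subtlety is the genericity of the initial data $A$ and $\Gamma$, which must be chosen carefully enough that each critical value produces exactly one honest link rather than a degenerate superposition, and so that the composition of the resulting links agrees with $f$ as a birational map over $B$.
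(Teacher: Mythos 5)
Your plan runs a single $(K_W+\Gamma)$-MMP with scaling of one divisor $H = q^*A$ and tries to read off a chain of Mori fibre spaces from the critical values of the scaling parameter. The central gap is the assertion that ``between consecutive critical values the current model is $\Q$-factorial and terminal with a Mori fibre space structure.'' That is not what an MMP with scaling produces: the models appearing at the successive nef thresholds are the results of divisorial contractions and flips, and except for the final one they carry no Mori fibre space structure. In the language of the geography of ample models, the ray $t\mapsto K_W+\Gamma+tH$ starts in the big locus and pierces the boundary $\partial^+\mcal C_\mathfrak R$ of strictly pseudoeffective divisors exactly once; only that single crossing produces a Mori fibre space, namely $Y/T$, and your sequence never visits $X/S$ and never produces a chain.

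The proof of Theorem~\ref{thm:0.2} in \cite{HM09}, reformulated in Lemma~\ref{lem:4.1} of this paper, is genuinely two-parameter. After building the adjoint ring $\mathfrak R$ on a common resolution $Z$ as in Proposition~\ref{pro:3.1}, one identifies ample $\Q$-divisors $A_X$ and $A_Y$ such that $X/S$ and $Y/T$ are results of $(K_Z+\Delta)$-MMPs with scaling by $A_X$ and $A_Y$ respectively, and takes the $2$-dimensional affine slice $H= (K_Z+\Delta) + \R_+ A_X + \R_+ A_Y$. The crucial observation is that the piecewise-linear path
\[
\mcal T = \bigl\{K_Z+\Delta+\lambda\bigl(tA_X+(1-t)A_Y\bigr) : \lambda>0,\; t\in[0,1]\bigr\}\cap\partial\mcal P_{\mathfrak R, H}
\]
lies entirely in $\partial^+\mcal C_\mathfrak R$ (no $D\in\mcal T$ is big), so it connects the facet dual to $X/S$ to the facet dual to $Y/T$ through the non-big part of the boundary. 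Each time this path crosses a codimension-two face of the chamber decomposition, the $2$-ray game over the ample model of that face yields an elementary Sarkisov link (Lemma~\ref{lem:3.3}); there are only finitely many crossings because the decomposition of $\mcal C_\mathfrak R$ is finite rational polyhedral (Theorem~\ref{thm:2.1}). Your single scaling direction cannot keep the path on $\partial^+\mcal C_\mathfrak R$; you need the second ample direction in order to interpolate between the two Mori fibre space structures. On termination: the relevant finiteness is not that of a single MMP with scaling, but the finiteness of the chamber decomposition coming from finite generation of the adjoint ring, which accounts at once for ``finitely many links'' and ``finitely many contractions within each link.''
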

An \emph{elementary Sarkisov link} $L_{i,j}\colon X_i/S_i\dashto X_j/S_j$ is the birational map between two Mori fibre spaces that are end products of the MMP of $Z$ over $W$, where $Z$ is terminal, $Z\lto W$ is a fibration ($\dim W<\dim Z$) with $\rho(Z/W)=2$ (see Section~\ref{prelim}). 
Theorem~\ref{thm:0.2} is proved for $3$-folds in \cite{Co95}, where a 
decomposition of the birational map $f\colon X/S\dashto Y/T$ is achieved by \emph{untwisting} $f$. Explicitly, there is a mobile linear system $\mcal L$ on $X$ associated to $f$, and an elementary link $L_{1,2}\colon X= X_1\dashto X_2$ that partially resolves the highest multiplicity locus of $\Bs\mcal L$; replace $X$ by $X_2$, $f$ by $f\circ L_{1,2}^{-1}$ and $\mcal L$ by its image on $X_2$ and repeat the procedure. After a finite number of iterations, the image of $\mcal L$ becomes basepoint free and $f$ has been decomposed in elementary links. The proof in arbitrary dimension uses different ideas. Both Theorems~\ref{thm:0.1} and ~\ref{thm:0.2} are consequences of the study of the \emph{geography of ample models} of \emph{effective} adjoint divisors $K_Z+\Theta$, as $\Theta$ varies in a suitable region of the space of $\R$-divisors $\Div_\R(Z)$.

This paper studies \emph{relations in the Sarkisov Program}.  
Given a birational map $f\colon X\dashto Y$ between Mori fibre spaces, the decomposition of $f$ into elementary Sarkisov links in Theorem~\ref{thm:0.2} is not unique. If
\[ f\colon X_1/S_1\dashto \ldots \dashto X_r/S_r \mbox{ and } f\colon X_1/S_1\dashto \ldots \dashto X_q/S_q\] are two distinct decompositions, then 
\begin{equation}\label{eq:1.1}X_1/S_1\dashto \ldots \dashto X_r/S_r\simeq X_q/S_q\dashto X_r/S_r \dashto  X_1/S_1\end{equation}
is an element of $\Aut(X_1/S_1)$, i.e.~an automorphism of $X=X_1$ that commutes with the fibration $X_1\to S_1$.  
A \emph{relation in the Sarkisov Program} is a decomposition of an element of $\Aut(X_1/S_1)$ into elementary Sarkisov links as in \eqref{eq:1.1}. 
Relations arise naturally among Sarkisov links between Mori fibre spaces produced by the MMP on suitable varieties $Z$ with $\rho(Z)\geq 3$. 

As an example, let $S$ be the del Pezzo surface obtained by blowing up two points $P_1$ and $P_2$ on $\PS^2$. Let $E_1,E_2$ be the $(-1)$-curves on $S$ and $L$ the proper transform of the line through $P_1$ and $P_2$. Then, $\Effb(S)$, the pseudo effective cone of $S$, is the cone over a polytope $\mcal P$ as in Figure~\ref{fig:1.1}.
\begin{figure}[h]\label{fig:1.1}\vspace{-20pt}
\begin{center}
\begin{pspicture}[psscale=.5](-0.5,0)(3.6, 3)\psset{unit=0.75cm}
\psdots(0,0)(1,1.5)(2,3)(3,1.5)(4,0)(2,1)
%\rput[t](2,-0.1){point}
%\rput[r](1.6,2.9){$L$}
\rput[b](2, 3.2){$L$}
\rput[r](-0.2, 0){$E_1$}\rput[l](4.2, 0){$E_2$}
\rput[br](0.8,1.5){$L+E_1$}
\rput[bl](3.2,1.5){$L+E_2$}
%\rput[r](1.8,3){point}
\rput(2,0.5){$\mcal C_4$}
\rput(2,1.3){$\mcal C_0$}
\rput(2,1.9){$\mcal C_1$}
\rput(1.2,1){$\mcal C_2$}
\rput(2.8,1){$\mcal C_3$}
\psline(0,0)(2,3)(4,0)(0,0)
\psline[linewidth=0.4pt](0,0)(3,1.5)(1,1.5)(4,0)
\end{pspicture}
\vspace{-10pt}
\end{center}\caption{}\vspace{-10pt}
\end{figure}
There is a fan of $\Effb S= \cup \mcal C_i$, where $\mcal C_i$ are rational polyhedral cones, that determines a \emph{geography of ample models}. Mori fibre spaces produced by $K_S$-MMPs are in $1$-to-$1$ correspondence with those facets of the $\mcal C_i$ lying in the locus of non big divisors. In Figure 1, these are the facets of the $\mcal C_i$ supported on the sides of the triangle, e.g~$[L+E_1, L]$ represents $\PS_a^1\times \PS_b^1\to \PS_a^1$, $[L+E_2, L]$  represents $\PS_a^1\times \PS_b^1\to \PS_b^1$ and $[E_1, l+E_1]$ represents $\F_{1a}\to \PS^1_a$ (see Example~\ref{exa:4.1} for details). Hacon and McKernan show that if two of these line segments have non-empty intersection (here at the points $L,L+E_1, L+E_2, E_1$ and $E_2$), there is an elementary Sarkisov link between the corresponding Mori fibre spaces. Further, a decomposition into elementary links of any birational map $f\colon X/S\dashto Y/T$ can be obtained by tracing a path on $\partial \mcal P$ from a point in the interior of the line segment corresponding to $X/S$ to a point in the interior of the line segment corresponding to $Y/T$. In the same way, a relation in the Sarkisov Program \emph{dominated by S} is represented by a loop on $\partial \mcal P$, i.e.~by an element of the fundamental group $\pi_1(\mcal P)= \pi_1(\partial \Effb(S)\smallsetminus\{0\})$. Here,  $\pi_1(\mcal P)= \Z \cdot \gamma$, where $\gamma$ represents the relation:
\begin{equation}
\label{eq:1.2}
\xymatrixrowsep{0.15in}
\xymatrixcolsep{0.2in}
 \xymatrix{
S\ar@{=}[r]\ar[d]&S\ar[d]&&S\ar[d]\ar@{=}[r]& S\ar[d]&\\
\PS^1_a\times\PS^1_b \ar[d] & \F_{1b}\ar[d] \ar@{=}[r] & \F_{1b}, \F_{1a}\ar@{=}[r]\ar[d] & \F_{1a}\ar[d] & \PS^1_a\times\PS^1_b\ar[d] \ar@{=}[r]&\PS^1_a\times\PS^1_b\ar[d]  \\
 \PS^1_b \ar@{=}[r]& \PS^1_b\ar[d]& \PS^2\ar[d] & \PS^1_a\ar[d]\ar@{=}[r]& \PS^1_a\ar[d] &\PS^1_b\ar[d]\\
& \{P\}\ar@{=}[r]&\{P\}\ar@{=}[r]&\{P\}&\{P\}\ar@{=}[r]&\{P\}&} \end{equation}

I call a relation as in \eqref{eq:1.1} \emph{elementary} if there is a variety $W$ and morphisms $X_i\to S_i\lto W$ that commute with the links $L_{i, i+1}$ with $\rho(X_i/W)\leq 3$ for all $i$ (see Section~\ref{relations} for a precise definition). If $Z$ is a common resolution of all Sarkisov links $L_{i,i+1}$, an elementary relation corresponds to a simplicial loop on the boundary of the pseudoeffective cone of $Z$ and can be thought of as a \emph{relative Picard rank $3$ MMP}. It is naturally dual to 
the boundary of a suitable $2$-dimensional polytope as in figure~\ref{fig:1.1}. 

I show that relations in the Sarkisov Program correspond to simplicial loops on suitable polyhedral complexes supported on the locus of strictly pseudoeffective (effective non-big) divisors of nonsingular uniruled varieties. The following theorem (Theorem~\ref{thm:4.1}) is then a straightforward application of Van-Kampen's theorem. 
\begin{thm}\label{thm0}
A relation in the Sarkisov Program is the composition of a finite number of elementary relations. 
\end{thm}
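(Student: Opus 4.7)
The plan is to realize an arbitrary Sarkisov relation as a simplicial loop on a suitable polyhedral complex and then invoke Van Kampen's theorem to express it as a product of loops around codimension $2$ faces, which are precisely the elementary relations.

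First, starting with a relation
\[ X_1/S_1 \dashto X_2/S_2 \dashto \cdots \dashto X_r/S_r \simeq X_1/S_1 \]
of elementary Sarkisov links, I would produce a smooth projective variety $Z$ admitting morphisms $Z \to X_i$ realising all the $X_i/S_i$ as end products of $K_Z$-MMPs. This is arranged by taking a common resolution of all intermediate links $L_{i,i+1}$. By the geography of ample models, the Mori fibre spaces $X_i/S_i$ are in $1$-to-$1$ correspondence with certain codimension $1$ facets of $\partial\Effb(Z)$ lying in the strictly pseudoeffective locus, and each elementary Sarkisov link $L_{i,i+1}$ corresponds to a codimension $2$ face joining two such facets. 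Taking the intersection with a hyperplane section of $\Effb(Z)$ yields a compact polyhedral complex $\mcal P$ in which the given relation is encoded by a closed piecewise linear loop $\gamma$ on the $1$-skeleton.

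Next I would match the codimension $2$ strata of $\mcal P$ with elementary relations. A codimension $2$ face of $\mcal P$ corresponds to a relative MMP with $\rho(Z/W) = 3$ over some variety $W$, and, exactly as in the illustrative example \eqref{eq:1.2}, the small loop on $\partial \mcal P$ winding once around such a face is the boundary of a $2$-dimensional subpolytope whose vertices correspond to the Mori fibre spaces and whose edges are the elementary Sarkisov links of the relative $\rho=3$ MMP over $W$. By the definition of elementary relation from Section~\ref{relations}, these small loops are exactly the generators we need.

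Finally, Van Kampen's theorem (or equivalently the standard presentation of the fundamental group of a CW complex) yields a presentation of $\pi_1(\mcal P)$ in which generators are loops in the $1$-skeleton and relations are attaching maps of $2$-cells. Every loop on $\mcal P$ is thus homotopic to a product of conjugates of boundary loops of $2$-cells; conjugation of an elementary relation by a chain of links may itself be reinterpreted, after rechoice of basepoint, as another elementary relation, so $\gamma$ is a composition of finitely many elementary relations.

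The main obstacle I anticipate is not topological but combinatorial-geometric: one must verify that $\mcal P$ is a bona fide CW complex to which Van Kampen applies, that every codimension $2$ stratum really does arise from a relative $\rho = 3$ MMP so that its boundary loop is elementary in the precise sense of Section~\ref{relations}, and that the resulting decomposition is insensitive to the choice of common resolution $Z$. Establishing this dictionary between the algebraic geography of ample models of $Z$ and the polyhedral combinatorics of $\mcal P$ is presumably the substance of the earlier sections; once it is in place, the proof of the theorem is the formal application of Van Kampen sketched above.
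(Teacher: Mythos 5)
Your overall strategy coincides with the paper's: pass to a common resolution $Z$, use the geography of ample models to identify Mori fibre spaces and links with non-big faces of a polyhedral cone, encode the given relation as an edge loop, and identify elementary relations with loops around the non-big faces of codimension $3$ in the cone (the relative Picard rank $3$ geographies). The gap is in your final topological step. The assertion ``every loop on $\mcal P$ is thus homotopic to a product of conjugates of boundary loops of $2$-cells'' is only true for loops that are null-homotopic in the $2$-complex in question, i.e.\ it requires the complex to be simply connected. But the space that actually carries the relation is the strictly pseudoeffective locus $\partial^+\mcal C_\mathfrak R$ (equivalently the nerve $\mcal N_\mathfrak R$), and $\pi_1$ of this space is precisely the group of relations dominated by $Z$: it is not simply connected in general (in the degree $5$ del Pezzo example it is $\Z$). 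Declaring that attaching $2$-cells along the elementary loops $\gamma_\mcal A$ makes it simply connected is a restatement of the theorem, so the argument is circular. If instead you contract $\gamma$ inside the full polytope $\mcal P$ or the boundary sphere $\partial\mcal P$ (which are simply connected when $\rho(Z)\geq 4$), the $2$-cell boundaries you pick up include loops around codimension-$2$ strata lying in the big locus; the chambers around such strata have birational ample models rather than Mori fibrations, so those loops are not compositions of Sarkisov links and the resulting factorisation is not a factorisation into elementary relations. (A further, smaller, imprecision: the geography does not exist on all of $\Effb(Z)$ in general; one must work with the support $\mcal C_\mathfrak R$ of the adjoint ring constructed in Proposition~\ref{pro:3.1}, whose non-big boundary is arranged to be pure of dimension $\rho(Z)-1$ but is typically a proper, non-simply-connected subset of the boundary sphere.)

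At exactly this point the paper proceeds differently: it does not attach $2$-cells but covers the $1$-skeleton of $\mcal N_\mathfrak R$ by the residual complexes $\resi_{\mcal A}\mcal N_\mathfrak R$ attached to non-big faces $\mcal A$ of dimensions $\rho(Z)-3$ and $\rho(Z)-2$ (Definition-Lemma~\ref{dfnlm:3.1}, Lemma~\ref{lem:3.4}), shows that each piece has edge path group either trivial or infinite cyclic generated by a single elementary relation, with all contractions occurring over the base $W$ of the corresponding ample model (Corollaries~\ref{cor:4.2} and~\ref{cor:4.4}), and then applies Van Kampen to this cover to conclude that $E(\mcal N_\mathfrak R^{(1)})$, hence every relation dominated by $Z$, is generated by the loops $\gamma_\mcal A$ (Theorem~\ref{thm:4.1}, Corollary~\ref{cor:4.5}). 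Supplying this covering argument --- or some other reason why a relation loop can be contracted \emph{within the non-big locus} using only the elementary loops --- is what is missing from your proposal; once that is in place, your remarks about conjugation and basepoint change are fine.
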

There are $4$ \emph{types} of elementary Sarkisov links that correspond to possible configurations in the \emph{$2$-ray game}, or, loosely speaking, to configurations of extremal rays on a (weak) Fano variety $Z$ of Picard rank $2$. There is no such result when $\rho(Z)=3$, and  an explicit \emph{classification} of elementary relations is therefore not a reasonable goal. However, one can define two types (A and B) of elementary relations consisting of an arbitrary number of elementary links (see Definition~\ref{dfn:4.3}). For example, \eqref{eq:1.2} is of type A. In Example~\ref{exa:4.2}, I determine all elementary relations that arise among the Mori fibre spaces that are end products of the MMP on Fano $3$-folds $Z$ with $\rho(Z)=3$, and show that both types of relations occur. In fact, both types of relations are realized among the end products of the MMP on Picard rank $3$ toric Fano $3$-folds, or in the Cremona group of $\PS^3$.   

\subsection*{Outline}
I sketch the main idea underlying the results in \cite{HM09} and in this paper. For now, I do not distinguish between divisors and their numerical equivalence classes. 
Let $f\colon X\dashto Y$ be a birational map, and assume that $\vphi_X\colon Z\lto X$ and $\vphi_Y\colon Z\lto Y$ are results of the MMP on a nonsingular uniruled variety $Z$. Denote by $p_X\colon X\to S$ and $p_Y\colon Y\to T$ the Mori fibrations on $X$ and $Y$. If $D$ is an effective $\Q$-divisor on $Z$, $Z\dashto Z_D\simeq \Proj R(Z, D)$ (when it makes sense) is the \emph{ample model of $D$}--the precise definition is recalled in Section~\ref{prelim}.  
Define klt pairs $(Z,\Theta_X), (Z, \Theta_Y)$, where $\Theta_X, \Theta_Y$ are ample divisor such that $\vphi_X$ (resp.~$\vphi_Y$) is the ample model of $K_Z+(1+\varepsilon)\Theta_X$ (resp. $K_Z+(1+\varepsilon)\Theta_Y $) for $0<\varepsilon <\!<\!<1$ and $p_X\circ \vphi_X$ (resp. $p_Y\circ \vphi_Y$) that of $K_Z+\Theta_X$ (resp.~$K_Z+\Theta_Y$). Then, for suitable $\Theta_X$ and $\Theta_Y$, there is a rational polyhedral cone $\mcal{C} \subset \Effb_\R(Z)$, such that the following holds. Every divisor $D\in \mcal C$ is effective, $K_Z+\Theta_X$ and $K_Z+\Theta_Y$ lie on $\partial^+ \mcal C= \mcal C\cap (\Effb_\R(Z)\smallsetminus \B_\R(Z))$, while $K_Z+(1+\varepsilon)\Theta_X$ and $K_Z+(1+\varepsilon)\Theta_Y$ lie in the interior of $\mcal C$. 
Section~\ref{prelim} recalls general results on cones such as $\mcal C$. In particular, there is a decomposition of $\mcal C= \coprod \mcal A_i$ into relatively open rational polyhedral cones that determines a \emph{geography of ample models}. Namely, all $\Q$-divisors $D\in \mcal A_i$ have the same ample model and birational maps between these ample models are determined by incidence relations between the closed cones $\overline{\mcal A_i}$. The Sarkisov program studies incidence relations between cones $\overline{\mcal A_i}$ that intersect the locus of non-big divisors $\partial^+ \mcal C$. 

Given a relation as in \eqref{eq:1.1},  one can define a suitable cone $\mcal C$ in $\Div_{\R}(Z)$, where $Z$ is a common resolution of all Sarkisov links in \eqref{eq:1.1}, 
and a polyhedral complex $\mcal B^+(\mcal C_\mathfrak R)$ associated to the geography of ample models on $\mcal C$--this is done in Section~\ref{constructions}. 
There is a simplicial complex $\mcal N_\mathfrak R$-- the \emph{nerve of $\mcal B^+(\mcal C_\mathfrak R)$}-- that encodes the \emph{Sarkisov Program dominated by $Z$}. Explicitly, there is a $1$-to-$1$ correspondence between vertices of $\mcal N_\mathfrak R$ and Mori fibre spaces $X_i/S_i$, and between edges of $\mcal N_\mathfrak R$ and elementary Sarkisov links between the $X_i/S_i$ \emph{dominated} by $Z$. Given a face $\mcal A\subseteq \partial^+ \mcal C$ in the decomposition, one can construct a \emph{residual cone} $\resi_{\mcal A}\mcal C$ of dimension $d=\dim \mcal C- \dim \mcal A$. There is an induced geography of ample models on $\resi_\mcal A \mcal C$, and an induced complex $\resi_\mcal A \mcal N_\mathfrak R$.  If $\vphi\colon Z\lto W$ is the ample model of divisors $D\in \mcal A$, the geography encodes the birational geometry of $Z$ over $W$, and $\resi_\mcal A \mcal N_\mathfrak R$ the Sarkisov program among results of the MMP over $W$.  

In Section~\ref{relations}, I show that
relations in the Sarkisov program correspond to edge-loops on $\mcal N_\mathfrak R$, i.e.~elements of $\pi_1(\mcal N_\mathfrak R)$, or, equivalently, of $\pi_1(\partial^+ \mcal C\setminus \{0\})$. Theorem~\ref{thm0} is proved by constructing a cover of $\mcal N_\mathfrak R$ by residual complexes $\resi_\mcal A \mcal N_\mathfrak R$.

 \section{Preliminary Results}\label{prelim}
I consider varieties defined over $\C$, and denote by $\R_+$ and $\Q_+$ the set of non-negative real and rational numbers.
\subsection{Convex Geometry and polyhedral complexes}\label{cogeco}
 The topological closure of $\mcal S\subset \R^N$ is denoted by $\overline{\mcal S}$, and the boundary of a closed $\mcal C\subset \R^N$ by $\partial \mcal C$.  

A \emph{convex polyhedron} in $\R^N$ is defined by a finite number of (strict or not) linear inequalities in $\R^N$. A \emph{rational polytope} in $\R^N$ is a compact set which is the convex hull of finitely many rational points in $\R^N$. A \emph{rational polyhedral cone} in $\mathbb R^N$ is a convex cone spanned by finitely many rational vectors. The dimension of a cone in $\R^N$ is the dimension of the minimal $\R$-vector space containing it.
   
I recall some standard properties of convex sets which will be used repeatedly (see \cite{Grun} for references). 
Let $K\subset \R^N$ be a closed convex set. There is a unique subspace $L\subset \R^N$ of maximal dimension such that $K$ contains a translate of $L$. If $L^*\subset \R^N$ is a subspace complementary to $L$, then $K= L+(L^*\cap K)$ and $L^*\cap K$ contains no line.

If $K\subset \R^N$ is a closed convex subset that contains no line, there is a hyperplane $H$ such that $H\cap K$ is compact and has dimension $\dim K-1$.
In particular, if $K\subset \R^N$ is a closed cone with apex $x_0$, $K= \Cone_{x_0}(H\cap K)$.

\begin{dfn}\label{dfn:2.1}
A \emph{polyhedral complex} $\mcal C$ in $\R^N$ is a small category whose objects are convex polyhedra in $\R^N$, whose morphisms are isometric morphisms and that satisfies:
\begin{enumerate}
\item[(i)] If $c_1\in \Ob \mcal C$ and $c_2$ is a face of $c_1$, then $c_2\in \Ob \mcal C$, and the inclusion $i \colon c_2\lto c_1$ is a morphism of $\mcal C$.
\item[(ii)] If $c_1,c_2\in \Ob \mcal C$, there is at most one morphism $f\in \Mor \mcal C$ such that $f(c_1)\subset c_2$.
\end{enumerate}
The \emph{faces} of $\mcal C$ are the elements of $\Ob \mcal C$. A face $c$ of  $\mcal C$ is a \emph{facet} if $f(c)= c'$ for all $\{f\colon c\lto c'\}\in \Mor \mcal C$.
The complex $\mcal C$ is \emph{pure of dimension $n$} if all its facets have dimension $n$.
The \emph{underlying space} of $\mcal C$ is the topological space $|\mcal C|=\coprod_{c\in \Ob \mcal C} c/\sim$ where $c\sim f(c)$ for all $f\in \Mor \mcal C$ and $c\in \Ob \mcal C$. 
\end{dfn}
\begin{dfn}
Let $\mcal C$ be a pure polyhedral complex of dimension $n$. The \emph{Nerve of $\mcal C$} is the simplicial complex $\Ner(\mcal C)$ defined by:
\begin{enumerate} 
\item[(i)]
vertices of $\Ner \mcal C$ are dual to facets of $\mcal C$; if $c$ is a facet of $\mcal C$, $c^*$ denotes the corresponding vertex of $\Ner \mcal C$.   
\item[(ii)]vertices $c_0^*, \dots , c_k^*$ span a $k$-simplex if there is a face $c$ of $\mcal C$ of dimension $n-k$ and a collection of morphisms $\{f_i \colon c\lto c_i\}\in \Mor \mcal C$. The $k$-simplex $\sigma= [c^*_0, \dots, c^*_k]$ is \emph{dual} to $c$. 
\end{enumerate}
\end{dfn}
\begin{rem}\label{rem:2.1} For $k<n$, a $k$-dimensional face $c$ of $\mcal C$ needs not have a dual $k$-simplex; when it does, there may be several dual simplices. When $n=1$, $\mcal C$ is simplicial and dual to $\Ner \mcal C$.
 \end{rem}
\begin{dfn} \label{dfn:4.1}Let $K$ be a simplicial complex and $K^{(m)}$ its $m$-skeleton. 

An \emph{edge path in K} is a finite sequence $(v_0, v_1, \dots, v_n)$ of vertices of K such that $\{v_i, v_{i+1}\}$ spans a simplex for each $i$. An \emph{edge loop} is 
an edge path with $v_0=v_n$; the product of $(v_0, \dots, v_n)$ and $(v_n,\dots, v_m)$ is $(v_0, \dots, v_m)$. 

Edge paths are \emph{equivalent} if they are related by a finite sequence of \emph{elementary equivalences} given by $(\dots, v_i, v_{i+1}, v_{i+2}, \dots)\sim (\dots, v_i, v_{i+2}, \dots)$ when $\{ v_i, v_{i+1}, v_{i+2}\}$ spans a simplex of $K$. %The indices $v_i, v_{i+1}, v_{i+2}$ are not necessarily distinct, so that
%$(\dots , v_i , v_{i+1}, v_i, \dots )\sim (\dots , v_i , v_{i+1}, v_i, \dots )\sim(\dots , v_i, \dots )$ if 
%there is at least one other vertex in the sequence.

The \emph{edge path group} $E(K,v)$ is the group of equivalence classes of edge loops based at $v$; if $|K|$ is a geometric realisation of $K$, $E(K, v)\simeq \pi_1(|K|, v)$. 
\end{dfn}

\begin{lem}\label{lem:2.0} If $\mcal C$ is a finite pure polyhedral complex of dimension $n$, $|\mcal C|$ and $|\Ner \mcal C|$ are homotopy equivalent.
\end{lem}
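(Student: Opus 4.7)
My plan is to deduce the lemma from the classical Nerve Theorem (cf.\ Hatcher, Corollary~4G.3, or Borsuk) applied to an open cover of $|\mcal C|$ indexed by the facets of $\mcal C$. First I would pass to the barycentric subdivision $\mathrm{sd}(\mcal C)$, a simplicial complex satisfying $|\mathrm{sd}(\mcal C)|\cong|\mcal C|$ whose vertices are the barycenters $\hat{c}'$ of the faces $c'$ of $\mcal C$. For each facet $c$ of $\mcal C$, set
\[
U_c \;:=\; \bigcup_{c'\to c\,\in\,\Mor\mcal C}\;\mathrm{st}\!\bigl(\hat{c}';\,\mathrm{sd}(\mcal C)\bigr),
\]
the union of the open stars taken over all faces $c'$ admitting a morphism to $c$ (equivalently, over subfaces of $c$). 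Each $U_c$ is open, and purity of $\mcal C$ yields coverage: any $x\in|\mcal C|$ lies in the relative interior of a unique simplex of $\mathrm{sd}(\mcal C)$ corresponding to a chain $d_0<\cdots<d_j$ of faces of $\mcal C$, and the top element $d_j$ is a face of some facet $c$, so $x\in U_c$.

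Next I would verify that $\{U_c\}$ is a \emph{good} cover whose combinatorial nerve coincides with $\Ner\mcal C$. A direct check shows $x\in U_c$ exactly when the minimum $d_0$ of $x$'s chain is a subface of $c$; transitivity of ``face of'' then gives $\bigcap_i U_{c_i}\neq\emptyset$ if and only if the facets $c_0,\dots,c_k$ share a common subface in $\mcal C$, which is precisely the condition for $[c_0^*,\dots,c_k^*]$ to span a simplex of $\Ner\mcal C$. On a nonempty intersection, straight-line contraction toward the barycenter of a common subface of minimal dimension exhibits a deformation retraction onto a star-shaped neighbourhood, proving contractibility. The Nerve Theorem then yields $|\mcal C|\simeq|\Ner\mcal C|$, as required.

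The principal obstacle I expect is the contractibility of the intersections $U_{c_0}\cap\cdots\cap U_{c_k}$. This relies on singling out a canonical minimal common subface of an arbitrary collection of facets to anchor the retraction. Under the standard polyhedral-complex convention that pairwise intersections of faces are again faces of $\mcal C$ the minimum is simply $c_0\cap\cdots\cap c_k$, which is implicit in the isometric-morphism clause of Definition~\ref{dfn:2.1}. If one wishes to avoid this auxiliary axiom, one can instead retract pointwise using the chain minimum $d_0(x)$ attached to each $x\in\bigcap_i U_{c_i}$, obtaining a coherent flow onto $\mathrm{st}(\hat{d}_0;\mathrm{sd}(\mcal C))$ for a suitable common subface; this is a combinatorial but routine verification.
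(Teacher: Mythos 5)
The paper does not give an argument here at all: it quotes the statement from \cite{KK11}, so your proposal must stand on its own, and as written it has a genuine gap. The complex $\Ner\mcal C$ of the paper is \emph{not} the \v{C}ech nerve of your star cover $\{U_c\}$. In the paper's definition, vertices $c_0^*,\dots,c_k^*$ span a $k$-simplex only if the facets $c_0,\dots,c_k$ receive morphisms from a common face of dimension \emph{exactly} $n-k$, and by Remark~\ref{rem:2.1} a given collection of facets may have no dual simplex or several distinct dual simplices. Your cover nerve records only nonemptiness of the intersections $U_{c_0}\cap\dots\cap U_{c_k}$, so the identification in your second paragraph is false in general. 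Concretely: for three $3$-dimensional facets glued along a single common vertex, the triple intersection of the $U_{c_i}$ is nonempty, so your nerve contains a $2$-simplex, while $\Ner\mcal C$ contains no edge at all (there is no common face of dimension $n-1=2$); for two facets glued along two distinct codimension-one faces (two squares forming a cylinder), your nerve is a single edge, whereas $\Ner\mcal C$ has two dual edges and $|\mcal C|\simeq S^1$. The second example also shows that your cover need not be good: $U_{c_0}\cap U_{c_1}$ is disconnected, Borsuk's Nerve Theorem does not apply, and there is no ``canonical minimal common subface'' to retract to --- the categorical Definition~\ref{dfn:2.1} does not require intersections of faces to be faces, nor to be unique. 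So even where the Nerve Theorem does apply, it produces a homotopy equivalence of $|\mcal C|$ with the \v{C}ech nerve of the cover, which is in general a different complex from $\Ner\mcal C$ (even for subdivisions of boundaries of polytopes, where for instance four facets around a vertex give a $3$-simplex of the \v{C}ech nerve but no $3$-simplex of $\Ner\mcal C$).

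To close the gap you would have to use the dimension condition and the multiplicity of dual simplices in an essential way, e.g.\ by working with the duality between $k$-simplices of $\Ner\mcal C$ and the codimension-$k$ faces that admit them and producing an explicit deformation retraction (or a common subdivision) compatible with that duality, as in the source cited by the paper; alternatively, one can restrict to the complexes actually used later in the paper, which are subdivisions of $\partial^+\mcal C_\mathfrak R$ where facets meet along honest common faces, but then one must still reconcile the \v{C}ech nerve with $\Ner\mcal C$, which your argument does not do.
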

\begin{proof}
This is \cite[Lemma 10]{KK11}. 
\end{proof}
\subsection{Models of effective divisors and the classical MMP}\label{models}
Let $Z$ be a normal projective variety and $\mathbf R\in \{ \Z, \Q, \R\}$.  The group of $\mathbf R$-Cartier $\mathbf R$-divisors is denoted by $\Div_\mathbf R(Z)$, and $\sim_{\mathbf R}$ and $\equiv$ denote $\mathbf R$-linear and numerical equivalence of $\mathbf R$-divisors. If $Z\lto X$ is a morphism to a normal projective variety, numerical equivalence over $X$ is denoted by $\equiv_X$. Let $\Pic(Z)_\mathbf R=\Div_\mathbf R(Z)/\sim_\mathbf R$ and $N^1(Z)_\mathbf R=\Div_\mathbf R(Z)/\equiv$. Denote by $\rho(Z)= \dim N^1(Z)_{\Q}$ the \emph{Picard rank} of $Z$.
If $D\in \Div_\mathbf R(Z)$, $[D]\in N^1(Z)_\mathbf R$ is the image of $D$ under the natural map $\Div_\mathbf R(Z)\to N^1(Z)_\mathbf R$.   

The ample, big, nef, effective, and pseudo-effective cones in $N^1(Z)_\R$ are denoted by $\Amp (X)$, $\B(Z)$, $\Nef (Z)$, $\Eff(Z)$, and $\overline{\Eff}(Z)$. 

A {\em pair} $(Z,\Delta)$ consists of a normal projective variety $Z$ and an effective $\R$-divisor $\Delta$ on $Z$ such that $K_Z+\Delta$ is $\R$-Cartier; $K_Z+\Delta$ is an \emph{adjoint divisor}. I say that $(Z, \Delta)$ is $\Q$-factorial if $Z$ is; I often assume that $K_Z+\Delta$ is $\Q$-Cartier. The pair $(Z,\Delta)$ has \emph{klt} (resp.~\emph{log canonical}) singularities if for every log resolution $f\colon W\longrightarrow Z$, the coefficients of the divisor $K_W-f^*(K_Z+\Delta)$ are all $>{-}1$ (resp.~$\geq{-}1$). A klt pair $(Z, \Delta)$ is \emph{terminal} if, in addition, all $f$-exceptional divisors appear in $K_W-f^*(K_Z+\Delta)$ with $>0$ coefficients.

I recall the definitions of models of divisors introduced in \cite{BCHM}, and show that the results of the classical (log)-MMP are models of suitable effective divisors.
\begin{dfn} \label{dfn:2.3}
Let $Z$ be a normal projective variety and $D\in \W_{\Q}(Z)$. Let $f \colon Z \dashto X$ be a birational contraction such that $D'= f_{*}D$ is $\Q$-Cartier.
\begin{enumerate}
\item[1.] The map $f$ is \emph{$D$-nonpositive} if for a resolution $(p,q)\colon W \rightarrow Z\times X$, 
$$p^*D = q^*D'+ E,$$
where $E\geq 0$ is $q$-exceptional. When $\Supp E$ contains the strict transform of all $f$-exceptional divisors, $f$ is \emph{$D$-negative}.  
%\item[2.] The contraction $f$ is a \emph{nef model} of $D$ if $f$ is $D$-nonpositive, $X$ is normal and projective and $D'$ is nef. 
\item[2.] When $D$ is effective, $f$ is a \emph{semiample model} of $D$ if $f$ is $D$-nonpositive, $X$ is normal and projective and $D'$ is semiample. 
If $\vphi \colon X\to S$ is the semiample fibration defined by $D'$, the \emph{ample model} of $D$ is $\vphi\circ f\colon Z \dashto X\to S$.
\end{enumerate}
\end{dfn}

\begin{lem}\label{lem:2.1}
Let $Z$ be a $\Q$-factorial projective variety and $D_1$ and $D_2$ be numerically equivalent big $\Q$-divisors. Assume that $R(Z, D_1)$ and $R(Z, D_2)$ are finitely generated and denote by $\vphi_i$ the map $Z \dashto \Proj R(Z, D_i)$.

There is an isomorphism $\eta \colon \Proj R(Z, D_1)\to \Proj R(Z, D_2)$ such that $\vphi_2= \eta\circ \vphi_1$ and $\sB(D_1)= \sB(D_2)$. 
\end{lem}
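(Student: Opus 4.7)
The approach is to realize both $X_i := \Proj R(Z, D_i)$ as the ample model of the common numerical class $[D_1]=[D_2]$, by working on a common resolution and invoking numerical invariance of the fixed part of a big linear system. Choose a smooth projective variety $p\colon W \to Z$ resolving the indeterminacy of both birational maps $\varphi_i\colon Z \dashto X_i$, with resulting morphisms $q_i\colon W \to X_i$ so that $q_i = \varphi_i\circ p$. Since $D_i$ is big and $R(Z,D_i)$ is finitely generated, $\varphi_i$ is a birational contraction onto a normal projective variety of the same dimension as $Z$, and the pushforward $A_i := (\varphi_i)_\ast D_i$ is an ample $\Q$-Cartier $\Q$-divisor on $X_i$. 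For $m$ sufficiently divisible, one has
\[ p^\ast(m D_i) = M_i + F_i, \qquad M_i = q_i^\ast(m A_i), \quad F_i\ge 0 \text{ and } q_i\text{-exceptional}, \]
with $M_i$ base-point free and $F_i$ the fixed part of $|m p^\ast D_i|$.

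\textbf{Key step.} From $D_1 \equiv D_2$ we get $p^\ast D_1 \equiv p^\ast D_2$ on $W$. I would then invoke the numerical invariance of the Nakayama--Zariski negative part $N_\sigma$ of a big divisor: under finite generation of $R(W, p^\ast D_i) = R(Z, D_i)$, the normalized fixed part $(1/m) F_i$ stabilizes to $N_\sigma(p^\ast D_i)$ for $m$ divisible enough, and $N_\sigma$ is a numerical invariant. Hence $F_1 = F_2 =: F$, and consequently $M_1 \equiv M_2$ are both free, differing only by $p^\ast(m(D_1 - D_2))$, the pullback of a numerically trivial $\Q$-Cartier divisor. Since each $M_i$ is semi-ample, the morphism it defines contracts exactly those curves $C \subset W$ with $M_i \cdot C = 0$; this is a purely numerical condition, so the two morphisms collapse the same curves. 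Their Stein factorizations therefore have canonically isomorphic targets, yielding an isomorphism $\eta\colon X_1 \xrightarrow{\sim} X_2$ with $q_2 = \eta \circ q_1$, and hence $\varphi_2 = \eta \circ \varphi_1$.

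\textbf{Stable base loci.} Finite generation implies $\sB(D_i) = \Bs|mD_i|$ on $Z$ for $m$ sufficiently divisible, and this base locus equals the image under $p$ of the non-$p$-exceptional components of $\Supp F_i$. Since $F_1 = F_2$, one concludes $\sB(D_1) = \sB(D_2)$.

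\textbf{Main obstacle.} The hard part is the identification $F_1 = F_2$. A priori $N_\sigma$ is only an asymptotic invariant, so one must use finite generation to ensure that the normalized fixed part is attained exactly at a fixed divisible $m$, rather than only in the limit; this is where the hypothesis that both $R(Z,D_i)$ are finitely generated is essential. A secondary point is to pass from the equality $M_1 \equiv M_2$ of numerical classes to an isomorphism of the Stein factorization targets; this is a formal consequence of the fact that semi-ample contractions are determined by their kernel in $\NEb(W)$, but deserves to be spelled out.
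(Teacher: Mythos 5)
Your argument is correct, and it is essentially the standard route to this fact. The paper itself does not give a proof here: it simply cites \cite[Lemma~3.11]{KKL12}, so there is no internal argument to compare against. That said, your approach is exactly what one expects that reference to use, and it is corroborated by the way Lemma~\ref{lem:2.1} is invoked later in the paper (in the proof of Proposition~\ref{pro:2.1} the author extracts from it precisely the identity $\Fix|f^*dD|=\Fix|f^*dp(D)|$, which is your key step $F_1=F_2$).

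Two small points worth tightening, neither fatal. First, you need $W$ to be a resolution on which the mobile part of $|p^*(mD_i)|$ is basepoint free for \emph{both} $i$ and for the \emph{same} $m$; this is available by taking a common refinement of the resolutions furnished by \cite[Theorem 4.1]{ELMNP} (cited in Remark~\ref{rem:2.5}), and you should say so, since otherwise ``$F_1=F_2$'' is an equation between divisors computed at possibly different levels $m$. Second, you identify $N_\sigma(p^*D_i)$ with the stabilised normalised fixed part; the clean way to see this under finite generation is via the Fujita--Zariski decomposition $p^*D_i=P_i+N_i$ with $P_i$ semiample and $H^0(W,mP_i)=H^0(W,mp^*D_i)$, from which $\Fix|mp^*D_i|=mN_i$ for divisible $m$ and hence $N_i=N_\sigma(p^*D_i)$; the numerical invariance of $N_\sigma$ on big classes then gives $N_1=N_2$. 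Your remark about passing from $M_1\equiv M_2$ to an isomorphism of Stein targets is fine: both $q_i$ are birational (hence trivially have connected fibres) onto normal $X_i$, and two semiample morphisms with numerically equal polarisations contract the same curves and therefore the same fibres, so the induced map of targets is an isomorphism. One cosmetic slip: $\sB(D_i)=p(\Supp F_i)$ outright, not just the image of the non-$p$-exceptional components; this does not affect the conclusion since $F_1=F_2$.
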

\begin{proof}This is \cite[Lemma 3.11]{KKL12}.
\end{proof}

\begin{dfn} \label{dfn:2.4}Let $(Z, \Delta)$ be a $\Q$-factorial  klt pair with $K_Z+\Delta\in \Div_\Q(Z)$. 
A birational contraction $\vphi\colon Z \dashto X$ is \emph{the result of a $(K_Z+\Delta)$-MMP} if $X$ is projective, $\Q$-factorial and if:
\begin{enumerate}\item[(i)] $\vphi$ is a semiample model of $K_Z+\Delta$ when $K_Z+\Delta$ is pseudoeffective,
\item[(ii)] $\vphi$ is a semiample model of $K_Z+\Theta$ for some $(Z, \Theta)$ klt with 
$\Theta -\Delta$ nef and $[K_Z+\Theta]\in \partial \Effb_{\Q}(Z)$ otherwise; %when $K_Z+\Delta$ is not pseudoeffective. 
$\vphi\colon Z\dashto X$ is \emph{an MMP with scaling by $\R_+(\Theta-\Delta)$}. 
\end{enumerate}
The MMP terminates for $(Z, \Delta)$ if a contraction $\vphi\colon Z\dashto X$ that is the result of a $(K_Z+\Delta)$-MMP exists. The \emph{classical MMP} refers to the $K_Z$-MMP, where $Z$ is terminal.
\end{dfn}
\begin{rem}\label{rem:2.2}
\cite{BCHM} shows that the MMP terminates for $(Z, \Delta)$ klt unless both $\Delta$ and $K_Z+\Delta$ are strictly pseudoeffective (i.e.~not big).   
\end{rem}

The goal of the following Lemma is to verify that Definition~\ref{dfn:2.4}(ii) is consistent with the classical formulation of the MMP.
\begin{lem}\label{lem:2.2} Let $(Z, \Delta)$ be a $\Q$-factorial klt pair and assume that $K_Z+\Delta$ is not pseudoeffective.
Then, any result $\vphi \colon Z\dashto X$ of a $(K_Z+\Delta)$-MMP is $(K_Z+\Delta)$-nonpositive. 

Let $\Theta$ be a $\Q$-divisor with $\Theta -\Delta$ nef, $[K_Z+\Theta]\in \partial \Effb_{\Q}(Z)$, and such that $\vphi$ is a semiample model of $K_Z+\Theta$. If $f\colon X\to S$ is the Iitaka fibration associated to $\vphi_*(K_Z+\Theta)$, then ${-}\vphi_*(K_Z+\Delta)$ is $f$-nef. %where $S=\Proj R(X, \vphi_*(K_Z+\Theta))$. 
\end{lem}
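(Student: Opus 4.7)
The plan is to work on a common log resolution $(p,q)\colon W \to Z\times X$ of $\vphi$ and to exploit the splitting $K_Z+\Delta = (K_Z+\Theta) - (\Theta-\Delta)$ together with the negativity lemma applied to the nef divisor $\Theta-\Delta$. In both parts the core input is the effectivity of the $q$-exceptional divisor $G := q^*\vphi_*(\Theta-\Delta) - p^*(\Theta-\Delta)$.

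For Part 1, by Definition~\ref{dfn:2.4}(ii), $\vphi$ is a semiample model of $K_Z+\Theta$, hence $(K_Z+\Theta)$-nonpositive:
\[ p^*(K_Z+\Theta) = q^*\vphi_*(K_Z+\Theta) + E_\Theta, \qquad E_\Theta\ge 0 \text{ and } q\text{-exceptional}. \]
Since $\vphi$ is a birational contraction, $F := p^*(\Theta-\Delta) - q^*\vphi_*(\Theta-\Delta)$ is $q$-exceptional. For any $q$-contracted curve $C'\subset W$, $F\cdot C' = (\Theta-\Delta)\cdot p_*C' \ge 0$ by nefness of $\Theta-\Delta$ on $Z$; thus $F$ is $q$-nef and the negativity lemma, applied to $-F$, gives $G := -F \ge 0$. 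Subtracting the pulled-back identity for $\Theta-\Delta$ from that for $K_Z+\Theta$ yields
\[ p^*(K_Z+\Delta) = q^*\vphi_*(K_Z+\Delta) + E_\Theta + G, \]
so $\vphi$ is $(K_Z+\Delta)$-nonpositive with effective, $q$-exceptional defect $E_\Theta+G$.

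For Part 2, since $\vphi_*(K_Z+\Theta)$ is semiample and $f\colon X\to S$ its Iitaka fibration, $\vphi_*(K_Z+\Theta)$ is $f$-numerically trivial. Hence for every $f$-contracted curve $C\subset X$,
\[ \vphi_*(K_Z+\Delta)\cdot C = -\vphi_*(\Theta-\Delta)\cdot C, \]
and it suffices to show $\vphi_*(\Theta-\Delta)\cdot C \ge 0$. After replacing $W$ by a further blow-up so that $g := f\circ q\colon W\to S$ is a morphism, one lifts $C$ to an irreducible curve $\tilde C\subset W$ with $q_*\tilde C = mC$ for some $m \ge 1$. On $W$,
\[ m\,\vphi_*(\Theta-\Delta)\cdot C = q^*\vphi_*(\Theta-\Delta)\cdot \tilde C = p^*(\Theta-\Delta)\cdot \tilde C + G\cdot \tilde C. \]
The first summand is $(\Theta-\Delta)\cdot p_*\tilde C \ge 0$ by nefness of $\Theta-\Delta$, and when $\tilde C$ can be chosen outside $\Supp(G)$ the second summand is $\ge 0$ because $G$ is effective. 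This yields the required $f$-nefness of $-\vphi_*(K_Z+\Delta)$.

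The main technical point is the choice of $\tilde C$ avoiding $\Supp(G)$: this is straightforward when $C$ is not contained in the codimension-$\ge 2$ locus $\Exc(\vphi^{-1})\subset X$ (so the strict $q$-transform of $C$ works), but requires an extra argument, by continuity of intersection pairings and density of moving fiber curve classes, for curves $C$ that happen to lie inside $\Exc(\vphi^{-1})$. The effectivity of $G$, obtained from nefness of $\Theta-\Delta$ and the negativity lemma, is the key conceptual ingredient of both parts.
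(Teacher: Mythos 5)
Your Part 1 is essentially the paper's proof. You factor $K_Z+\Delta=(K_Z+\Theta)-(\Theta-\Delta)$, use that $\vphi$ is a semiample model of $K_Z+\Theta$ to get the defect $E_\Theta$, and apply the negativity lemma to the $q$-exceptional divisor $G=q^*\vphi_*(\Theta-\Delta)-p^*(\Theta-\Delta)$ (your $G$ equals the paper's $E_q-E_p$). The paper phrases this by comparing $p^*(\Theta-\Delta)$ and $q^*\vphi_*(\Theta-\Delta)$ against the common strict transform $p_*^{-1}(\Theta-\Delta)=q_*^{-1}\vphi_*(\Theta-\Delta)$, but the content is identical.

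Your Part 2 uses the same key identity
\[
m\,\vphi_*(\Theta-\Delta)\cdot C \;=\; q^*\vphi_*(\Theta-\Delta)\cdot\widetilde C \;=\; p^*(\Theta-\Delta)\cdot\widetilde C + G\cdot\widetilde C,
\]
and the same two sources of positivity (nefness of $\Theta-\Delta$, effectivity of $G$). Where you differ from the paper is in handling curves meeting the $q$-exceptional locus. The paper does not attempt to prove the inequality for all $f$-contracted curves: it notes that curves $C$ with $(K_X+\vphi_*\Theta)\cdot C=0$ cover $X$ (since $K_X+\vphi_*\Theta$ is nef and not big), and then simply takes $C\not\subseteq q(\Exc q)$ so that the strict transform $\widetilde C$ meets $\Supp G$ properly and $G\cdot\widetilde C\ge 0$ is immediate. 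You, by contrast, flag the case $C\subset\Exc(\vphi^{-1})$ and invoke ``continuity of intersection pairings and density of moving fiber curve classes.'' That step is the genuine gap: intersection numbers are continuous, but the classes of $f$-contracted curves lying in $\Exc(\vphi^{-1})$ need not be limits of classes of moving fiber curves in $N_1(X/S)$, so density gives you nothing for those curves. There is no general reason why $f$-nefness on a covering family should propagate to special fibers. (The paper's argument, read literally, has the same weakness; it is harmless there because the lemma is applied only where $\rho(X/S)=1$, in which case a single nonzero $f$-contracted class determines $f$-nefness. You should either add that reduction explicitly or simply check the inequality on one general fiber curve and note that this suffices when $\rho(X/S)=1$.) Dropping the ``continuity and density'' sentence and replacing it with the covering-family reduction would align your proof with the paper's and close the loose end you created for yourself.
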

\begin{rem} If $Z$ has terminal singularities, and if $\vphi\colon Z\dashto X$ is the result of a $K_Z$-MMP, then $X$ is terminal because $\vphi$ is $K_Z$-nonpositive.\end{rem}
  
\begin{proof}
Let $\vphi\colon Z\dashto X$ be the result of a $(K_Z+\Delta)$-MMP that is a semiample model for $K_Z+\Theta$, where $
\Theta$ is as in Definition~\ref{dfn:2.4}. Let $(p,q)\colon W\lto Z\times X$ be a resolution of $\vphi$. Then, $\vphi_*\Theta$ and $\vphi_*\Delta$ are $\R$-Cartier because $X$ is $\Q$-factorial, and by definition of $\vphi$:
\[ p^*(K_Z+\Theta)= q^*(K_X+ \vphi_*\Theta)+ E,\]
where $E\geq 0$ is $q$-exceptional. 
Since $\vphi$ is a contraction, $p$-exceptional divisors are $q$-exceptional, and there are $q$-exceptional divisors $E_p, E_q\geq 0$ such that:
\[ p_*^{-1}(\Theta- \Delta)= q_*^{-1}(\vphi_*(\Theta-\Delta))= p^*(\Theta-\Delta)- E_p= q^*(\vphi_*(\Theta-\Delta))-E_q.\]
The Negativity Lemma \cite[Lemma 2.19]{Kol92} shows that $E_q\geq E_p$ because $E_p-E_q\equiv_Xp^*(\Theta-\Delta)$. Denote by $E'= E+E_q-E_p\geq E$; then:
\[ p^*(K_Z+\Delta)= q^*(K_X+\vphi_*\Delta)+ E+ E_q-E_p.\]
and $\vphi$ is also $(K_Z+\Delta)$-nonpositive.

Let $C\subset X$ be an irreducible effective curve contracted by $f$, i.e.~with $(K_X+\vphi_*\Theta)\cdot C=0$. Such curves cover $X$ because $K_X+\vphi_*\Theta$ is not big, and we may assume that $C\not \subseteq q(\Exc q)$. 
Then, $(K_X+\vphi_* \Delta)\cdot C= \vphi_*(\Delta- \Theta)\cdot C$. 
Let $\widetilde{C}\subset W$ be an effective curve that maps $1$-to-$1$ to $C$. By the projection formula, $\vphi_*(\Delta- \Theta)\cdot C=q^*(\vphi_*(\Delta-\Theta))\cdot \widetilde{C}$. 
Since $q^*(\vphi_*(\Delta-\Theta))={-}(p^*(\Theta- \Delta)+(E_q-E_p))$ and $(E_q-E_p)\cdot \widetilde{C}\geq 0$ by construction of $C$,  we have $(K_X+ \vphi_* \Delta)\cdot \widetilde C\leq 0$. This shows that  ${-}\vphi_*(K_Z+\Delta)$ is $f$-nef.
\end{proof} 

The following structures arise in the MMP of uniruled varieties. 
\begin{dfn}\label{dfn:2.5}A \emph{log-Mori fibre space} is a projective $\Q$-factorial klt pair $(X, \Delta)$ equipped with a morphism $f\colon X \to S$ such that $\dim S<\dim X$, $\rho(X)= \rho(S)+1$ and ${-}(K_X+\Delta)$ is $f$-ample.

When $\Delta= 0$ and $X$ is terminal, $X/S$ is a \emph{Mori fibre space (Mfs)}. 
\end{dfn} 
By \cite{BCHM}, if $(Z, \Delta)$ is a klt pair with $[K_Z+\Delta]\not \in \Effb(Z)$, there is a contraction $\vphi\colon Z\dashto X$ that is the result of a $(K_Z+\Delta)$-MMP such that the fibration $f\colon X\to S$ has $\rho(X/S)=1$.  Then,  $(X, \vphi_*\Delta)$ is a log-Mori fibre space, because by Lemma~\ref{lem:2.2}, $-(K_X+\vphi_*\Delta)$ is $f$-nef, and $-(K_X+\vphi_*\Delta)\not \equiv_f 0$, so that $-(K_X+\vphi_*\Delta)$ $f$-ample. 

The \emph{Sarkisov Program} studies birational maps between (log-)Mori fibre spaces $(X, \Delta_X)/S$ and $(Y, \Delta_Y)/T$. There is a nonsingular uniruled variety $Z$ (resp.~a klt pair $(Z, \Delta)$ with $K_Z+\Delta$ non-pseudoeffective) such that
$Z\dashto X/S$ and $Z\dashto Y/T$ are results of $K_Z$-MMPs (resp.~$(K_Z+\Delta)$-MMPs).
The simplest examples of birational maps between Mori fibre spaces arise in the \emph{$2$-ray game}. 
\begin{exa}\label{exa0}
Let $X/S$ be a Mori fibre space and $Z\to X$ a divisorial contraction such that ${-}K_Z$ is nef and big over $S$. 
By the Cone theorem, there is another Mori fibre space $Y/T$ that is the result of a $K_Z$-MMP over $S$. Explicitly, $Z\dashto Y$ is either an isomorphism in codimension $1$ followed by a divisorial contraction and $S\simeq T$, or $Z\dashto Y$ is an isomorphism in codimension $1$ and $T\to S$ is a morphism with $\rho(T)= \rho(S)+1$. \end{exa}
\begin{dfn}\label{dfn2.6}\cite{Co95}
An \emph{elementary Sarkisov link} between log-Mori fibre spaces $(X, \Delta_X)/S$ and $(Y, \Delta_Y)/T$ is a diagram 
\begin{equation}\label{esl}
\xymatrixrowsep{0.2in}
\xymatrixcolsep{0.3in}
\xymatrix{\widetilde{X} \ar@{-->}[rr] \ar[d]_f & &\widetilde{Y}\ar[d]^g\\
X\ar[d]_{\vphi} & &Y \ar[d]^{\psi}\\
S\ar[dr]_{p} & & T\ar[dl]^q \\
 & R & } 
\end{equation}
where $\Phi \colon \widetilde{X}\dashto \widetilde{Y}$ is an isomorphism in codimension $1$, $\Phi_* \Delta_X= \Delta_Y$, $\rho(\widetilde{X}/R)= \rho(\widetilde{Y}/R)=2$, and $f,g, p$ and $q$ are either isomorphisms or $(K+ \Delta)$-nonpositive morphisms of relative Picard rank $1$. One of $f$ and $p$ and one of $g$ and $q$ are isomorphisms.
The link is of type I (resp.~III) when $p$ and $g$ (resp.~$f$ and $q$) are isomorphisms and of type II (resp.~IV) when $p$ and $q$ (resp.~$f$ and $g$) are isomorphisms.
\end{dfn}
A Sarkisov link is thus always induced by a $2$-ray configuration as in Example~\ref{exa0}: it is the birational map between two Mori fibre spaces that are end products of the $K_{\widetilde{X}}$-MMP over $W=S,T$ or $R$ with $\rho({\widetilde{X}}/W)=2$ and $\dim W<\dim \widetilde{X}$.

\subsection{Geographies of models on good regions}\label{geog}
 The Sarkisov Program is thus equivalent to the study of birational maps between models of strictly pseudoeffective adjoint divisors on nonsingular uniruled varieties. In this subsection, I recall the construction of some good regions of $\Div_\R(Z)$, for $Z$ nonsingular, where every effective divisor $D$ admits a $\Q$-factorial semiample model $\vphi_D\colon Z\dashto X_D$, and $\vphi_D$ can be decomposed into elementary maps analogous to Mori's contractions of extremal rays. The birational maps between models $X_D$ as $D$ varies in the region can then be studied easily.    
\begin{setup} \label{setup0}
Let $\Delta_1, \dots, \Delta_r$ be big $\Q$-divisors on a projective $\Q$-factorial variety $Z$, and assume that $(Z, \Delta_i)$ is klt for all $i$. The \emph{adjoint  ring} $\mathfrak R= R(Z; K_Z+\Delta_1, \dots, K_Z+\Delta_n)$ is 
\[\mathfrak R= \bigoplus_{(n_1, \dots, n_r)\in \N^r} H^0(Z, n_1(K_Z+\Delta_1)+ \dots+n_r(K_Z+\Delta_r)),\]
where, for each $D\in \Div_\R(Z)$, 
$H^0(Z, D)= \{f\in k(Z)\mid \ddiv f+ D\geq 0\}$.
By the results of \cite{BCHM}, the \emph{support of $\mathfrak R$}
\[ 
\mcal C_\mathfrak R=  \{D\in\sum\R_+(K_Z+\Delta_i)\mid H^0(Z, D)\neq\{0\}\}\subseteq \Div_\R(Z)\]
is a rational polyhedral cone (see \cite[Theorem 3.2]{CaL10}). \end{setup}

As is shown in \cite{KKL12}, the convex geometry of $\mcal C_\mathfrak R$ determines
a \emph{geography} of ample models of divisors $K+\Delta$ as they vary in $\mcal C_\mathfrak R$. 
Since $\mcal C_\mathfrak R$ is spanned by klt adjoint divisors, an additional condition on the dimension of $\mcal C_\mathfrak R$ ensures that these ample models are $\Q$-factorial, so that they are the results of $(K_Z+\Delta)$-MMPs for $K_Z+\Delta\in \mcal C_\mathfrak R$. 
The results relevant to this paper are recalled in the following Theorem and Proposition.  
\begin{thm}\label{thm:2.1}
Let $Z$, $\Delta_1, \dots, \Delta_r$ and $\mathfrak R$ be as in Setup~\ref{setup0}. Assume that there is a big divisor $D\in \mcal C_\mathfrak R$. 
Then, there is a finite decomposition
\begin{equation*}\mcal C_\mathfrak R= \coprod \mcal A_i\end{equation*}
with the following properties.
\begin{enumerate}
\item[1.] each $\overline{\mcal A_i}$ is a rational polyhedral cone,\item[2.]
there is a rational map $\vphi_i\colon X\dashto X_i$, with $X_i$ is normal and projective, such that $\vphi_i$ is the ample model of all $D\in\mathcal A_i\cap \Div_\Q(Z)$,
\item[3.] if $\mathcal A_j\subseteq\overline{\mathcal A_i}$, there is a morphism $\varphi_{ij}\colon X_i\to X_j$ with $\vphi_i\simeq \vphi_{ij}\circ \vphi_j$,
\item[4.] if $\mathcal A_i$ contains a big divisor, $\varphi_i$ is a semiample model of all $D\in\overline{\mathcal{A}_i}\cap \Div_\Q(Z)$ and $X_i$ has rational singularities.
\end{enumerate}
\end{thm}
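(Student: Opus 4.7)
The plan is to deduce the statement from finite generation of the adjoint ring $\mathfrak R$ combined with the theory of geographies of ample models developed in \cite{KKL12}. By \cite{BCHM}, $\mathfrak R$ is a finitely generated $\R$-algebra; consequently $\mcal C_\mathfrak R$, being its support, is a rational polyhedral cone spanned by the adjoint classes $[K_Z+\Delta_i]$ together with finitely many rational refinements coming from the generators of $\mathfrak R$. The existence of a big divisor in $\mcal C_\mathfrak R$ ensures that, at least on the interior of $\mcal C_\mathfrak R$, the ample model machinery of Definition~\ref{dfn:2.3} applies.

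To construct the decomposition I would declare two rational divisors in $\mcal C_\mathfrak R$ to be equivalent if they have the same ample model, extend this relation by continuity to all of $\mcal C_\mathfrak R$, and take the $\mcal A_i$ to be the resulting equivalence classes. Lemma~\ref{lem:2.1} shows that for big $\Q$-divisors the ample model depends only on the numerical class once the section ring is finitely generated, so only finitely many distinct ample models arise as $D$ ranges over $\mcal C_\mathfrak R\cap \Div_\Q(Z)$, and each $X_i$ can be built as $\Proj$ of an appropriate graded subring of $\mathfrak R$ cut out by $\overline{\mcal A_i}$. Properties (1)--(3) then follow directly: the cones $\overline{\mcal A_i}$ are rational polyhedral because they are carved out by rational linear conditions on the generators of $\mathfrak R$, and the factorization $\vphi_i \simeq \vphi_{ij}\circ\vphi_j$ when $\mcal A_j\subseteq \overline{\mcal A_i}$ is read off from the corresponding inclusion of $\Proj$'s.

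For property (4), if $\mcal A_i$ contains a big divisor $D_0$ then every $\Q$-divisor $D\in \overline{\mcal A_i}$ can be approximated within $\mcal A_i$ by big divisors $D+\varepsilon D_0$; since their ample models all factor through $X_i$ and the semiample cone on $X_i$ is closed, $(\vphi_i)_*D$ is semiample. Moreover, by \cite{BCHM}, $\vphi_i$ is the result of a $(K_Z+\Delta)$-MMP for a suitable klt pair $(Z,\Delta)$ with $[K_Z+\Delta]\in \mcal A_i$, so $X_i$ is $\Q$-factorial with klt, hence rational, singularities. The main technical obstacle is to control what happens on faces of $\mcal C_\mathfrak R$ where every divisor fails to be big: one must still produce well-defined ample models (now fibrations to lower-dimensional bases) and verify that the decomposition remains finite, which requires the full strength of finite generation of $\mathfrak R$ rather than merely of the individual section rings $R(Z,D)$. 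This is handled in \cite{KKL12} by restricting $\mathfrak R$ to extremal faces of $\mcal C_\mathfrak R$ and inducting on the dimension.
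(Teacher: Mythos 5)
The paper offers no independent argument for this statement: its proof is the single line ``See \cite[Theorems 4.2, 4.5]{KKL12}'', so your overall strategy of reducing everything to finite generation of $\mathfrak R$ and to the geography results of \cite{KKL12} is exactly the paper's route. However, the portions of your sketch that attempt to justify the statement on their own contain two steps that would fail as written. First, in your argument for (4) you assert that ``the semiample cone on $X_i$ is closed'': it is not. Limits of semiample classes are nef but in general not semiample, and the semiampleness of $(\vphi_i)_*D$ for $D$ on the boundary $\overline{\mcal A_i}$ is precisely the delicate point that finite generation is needed for; in \cite{KKL12} (reflected in Remark~\ref{rem:2.5}) it is obtained from the resolution $f\colon \widetilde Z\to Z$ on which $\Mob f^*(dD)$ is basepoint free for \emph{all} integral $D\in\mcal C_\mathfrak R$ together with linearity of $D\mapsto\Fix|f^*dD|$ on each chamber, not from any closedness of the semiample cone. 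So your approximation $D+\varepsilon D_0$ does not by itself give (4).

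Second, your justification of rational singularities via ``$X_i$ is $\Q$-factorial with klt singularities'' because $\vphi_i$ is the result of a $(K_Z+\Delta)$-MMP overstates what is true: the ample model of a big divisor in a chamber of dimension less than $\rho(Z)$ can contract small extremal faces and need not be $\Q$-factorial; $\Q$-factoriality is exactly why the extra hypotheses of Proposition~\ref{pro:2.1}(3) (that $\dim\mcal C_{\mathfrak R'}=\rho(Z)$ and $\mcal A_i$ is full-dimensional) are imposed there and are absent from Theorem~\ref{thm:2.1}. The correct, and simpler, route to rational singularities is that for a big klt adjoint divisor $K_Z+\Delta\in\mcal A_i$ the ample model $\vphi_i$ is $(K_Z+\Delta)$-nonpositive with $K_{X_i}+(\vphi_i)_*\Delta$ ample, so $(X_i,(\vphi_i)_*\Delta)$ is klt and hence $X_i$ has rational singularities. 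Your remaining points (polyhedrality and finiteness of the decomposition, uniqueness of ample models via Lemma~\ref{lem:2.1}, the factorizations in (3), and the treatment of non-big faces) are correctly delegated to \cite{KKL12}, which is all the paper itself does.
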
\begin{proof}
See \cite[Theorems 4.2,4.5]{KKL12}.
\end{proof}
\begin{rem}\label{rem:2.5} I recall the properties of the decomposition of $\mcal C_\mathfrak R$ used in what follows. Since $\mathfrak R$ is finitely generated, \cite[Theorem 4.1]{ELMNP} implies: 
\begin{enumerate}
\item[(i)] $\mcal C_\mathfrak R$ is a closed rational polyhedral cone,
\item[(ii)] there is a positive integer $d$ and a resolution $f \colon \widetilde{Z} \lto Z$ such that $\Mob f^*(dD)$ is basepoint free for every $D\in \mcal C_\mathfrak R\cap \Div_\Z(Z)$, and $\Mob f^*(kdD)=k\Mob f^*(dD)$ for every $k\in \N$.
\item[(iii)] there is a finite rational polyhedral subdivision $\mcal C_\mathfrak R=\bigcup \mcal{C}_i$ into cones with disjoint interiors such that the restriction of $D\mapsto \Fix |f^*dD|$ to each $\mcal C_i$ is linear. 
\end{enumerate}
The cones $\mcal A_i$ in Theorem~\ref{thm:2.1} are the relative interiors of all cones that are intersections of the $\mcal C_i$s and their proper faces. For each $i$, $\vphi_i$ is induced by the Iitaka fibration $\psi_i\colon \widetilde{Z}\to \Proj R(\widetilde{Z}, \Mob f^*dD)$ for any $D\in \mcal A_i\cap \Div_\Z(Z)$.%, and $\Exc \vphi_i\supseteq \Exc f \cup \mcal C_F$, where $F= \Fix |f^*dD|$.
\end{rem}

\begin{pro}\label{pro:2.1}Let $Z$ and $\mathfrak R$ be as in Theorem~\ref{thm:2.1}, and denote by $\pi\colon \Div_\R(Z) \lto N^1(Z)_\R$ the natural map.
\begin{enumerate}

\item[1.]
Let $L\subseteq \Div_\R(Z)$ be the maximal subspace with $A+L\subseteq \mcal C_\mathfrak R$ for some $\Q$-divisor $A$, 
and let $L^*\subseteq\Div_\R(Z)$ be a complementary subspace. Denote by $p\colon \Div_\R(Z) \to L^*$ the projection.
There is a finitely generated  ring ${\mathfrak R'}$ with support $\mcal C_{\mathfrak R'}= p(\mcal C_\mathfrak R)$. The cone  $\mcal C_{\mathfrak R'}$ contains no line and $\pi$ restricts to an isomorphism on $\mcal C_{\mathfrak R'}$. 
If $D\in \mcal C_\mathfrak R$ is a big $\Q$-divisor, $D$ and $p(D)$ have the same ample model.
\item[2.]

Let $\mcal C_\mathfrak R= \coprod \mcal A_i$ be the coarsest decomposition of Theorem~\ref{thm:2.1}. Then, $\mcal A_i= L+\mcal A_i'$, where $\mcal A'_i= p(\mcal A_i)$. The decomposition $\mcal C_{\mathfrak R'}= \coprod \mcal A'_i$ satisfies (1--4) in Theorem~\ref{thm:2.1}.
\end{enumerate}
Assume that $\dim \mcal C_{{\mathfrak R'}}= \rho(Z)$. Up to refining $\{\mcal A_i; i\in I\}$, we may assume that for all $i,j\in I$, every $\pi(\mcal A_i)$ is contained in one of the subspaces bounded by $H$, where $H\subset N^1(Z)_\R$ is any supporting hyperplane of $\pi(\mcal A_j)$.  
\begin{enumerate}\item[3.]
If $\dim \mcal A'_i= \rho(Z)$, $\vphi_i$ is the result of a $(K_Z+\Delta)$-MMP for all $(Z, \Delta)$ klt with $K_Z+\Delta\in \mcal A_i$. 
\item[4.]If $\mcal C_{\mathfrak R'}$ contains an ample divisor, $\vphi_i$ is the composition of a finite number of \emph{elementary contractions}, where an elementary contraction is an isomorphism in codimension $1$ or a morphism that contracts a single divisor. 
\end{enumerate}
\end{pro}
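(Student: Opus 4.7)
The proposition has four parts, which I would handle in sequence.

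\emph{Parts (1) and (2): the convex-geometric reduction.} The first part is essentially standard convex geometry: any closed convex set decomposes as the sum of its lineality space and a line-free convex set in a complementary subspace, so $\mcal C_\mathfrak R = L + (\mcal C_\mathfrak R \cap L^*)$ and $\mcal C_{\mathfrak R'} = p(\mcal C_\mathfrak R)$ is line-free. I would construct $\mathfrak R'$ as the multigraded section ring generated by rational generators of the extremal rays of $\mcal C_{\mathfrak R'}$; finite generation is inherited from $\mathfrak R$. For the injectivity of $\pi|_{\mcal C_{\mathfrak R'}}$, I would show that $(\Ker\pi) \cap \mathrm{span}(\mcal C_\mathfrak R) \subseteq L$: given $N \equiv 0$ in this span and $A$ big in $\inte(\mcal C_\mathfrak R)$, the divisor $A + tN$ is big and numerically equivalent to $A$, so by Lemma~\ref{lem:2.1} together with finite generation $A + tN \in \mcal C_\mathfrak R$ for every $t$, forcing $\R N \subseteq L$ by maximality of $L$. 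The final assertion of (1) is then immediate from Lemma~\ref{lem:2.1} since $D - p(D) \in L$. For (2), since the cells $\mcal A_i$ are characterised by having a common ample model (Theorem~\ref{thm:2.1}(2)) and this ample model is invariant under $L$-translation by Part 1, each $\mcal A_i$ is a union of $L$-cosets, giving $\mcal A_i = L + \mcal A'_i$; properties (1)--(4) of Theorem~\ref{thm:2.1} pass to the projected decomposition without difficulty.

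\emph{Part (3): MMP interpretation.} The hypothesis $\dim \mcal A'_i = \rho(Z)$ together with $\pi|_{L^*}$ being an isomorphism implies that $\pi(\mcal A'_i)$ is open in $N^1(Z)_\R$. Since $\pi(\mcal C_\mathfrak R) \subseteq \Effb(Z)$ and any nonempty open subset of the pseudoeffective cone meets the big cone (as $\B(Z) = \inte \Effb(Z)$ and $\mcal C_\mathfrak R$ contains a big divisor by assumption), $\mcal A_i$ contains a big divisor, so Theorem~\ref{thm:2.1}(4) applies and $\vphi_i$ is a semiample model of every $D \in \overline{\mcal A_i} \cap \Div_\Q(Z)$. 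For any $(Z, \Delta)$ klt with $K_Z + \Delta \in \mcal A_i$, the divisor $K_Z + \Delta$ is effective and hence pseudoeffective, so Definition~\ref{dfn:2.4}(i) is directly verified and $\vphi_i$ is the result of a $(K_Z+\Delta)$-MMP.

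\emph{Part (4): elementary contractions.} Let $A \in \mcal C_{\mathfrak R'}$ be ample, so its ample model is $\id_Z$. For $D \in \mcal A_i$, I would trace the line segment $[A, D]$ in $\mcal C_{\mathfrak R'}$. By the refinement hypothesis (every $\pi(\mcal A_j)$ lies in one halfspace bounded by any supporting hyperplane of any $\pi(\mcal A_k)$), this segment crosses codimension-$1$ walls one at a time, traversing a finite sequence of cells $\mcal A_0, \mcal A_1, \dots, \mcal A_k = \mcal A_i$. Each wall-crossing induces, via the fixed $f\colon \widetilde{Z}\to Z$ and the functions $D\mapsto \Mob f^*dD$ and $D\mapsto \Fix|f^*dD|$ of Remark~\ref{rem:2.5}, a birational map $X_{l-1}\dashto X_l$ between consecutive ample models; I would argue that each such map either contracts a single divisor (when $\Fix|f^*dD|$ acquires a new irreducible component across the wall) or is an isomorphism in codimension 1 (when only $\Mob f^*dD$ changes direction, i.e.~a flip/flop). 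Composing these steps realises $\vphi_i$ as a finite composition of elementary contractions.

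\emph{Main obstacle.} The principal technical difficulty lies in Part (4): establishing that crossing a single codimension-$1$ wall produces exactly one elementary birational operation. The refinement hypothesis prevents simultaneous wall-crossings, but one still needs to analyse carefully how the piecewise-linear behaviour of $\Mob f^*dD$ and $\Fix|f^*dD|$ encodes a single divisorial contraction or a single small modification on the ample models. This is where the most delicate bookkeeping will occur, and where I would lean on Remark~\ref{rem:2.5}(iii) together with the projection formula on $\widetilde{Z}$ to identify the single component of $\Fix$ (or the single flipping locus) produced by each wall.
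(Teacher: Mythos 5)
Your treatment of parts (1)--(2) follows the same route as the paper (project along the lineality space $L$, observe that divisors in $L$ are numerically trivial, generate $\mathfrak R'$ from rational generators of the projected cone, and use Lemma~\ref{lem:2.1} to compare $D$ with $p(D)$), but two of your steps do not hold as written. First, your injectivity argument for $\pi|_{\mcal C_{\mathfrak R'}}$ claims that $A+tN\in\mcal C_\mathfrak R$ for \emph{every} $t$ because $A+tN$ is big; membership in $\mcal C_\mathfrak R$ also requires lying in the polyhedral cone $\sum\R_+(K_Z+\Delta_i)$, which $A+tN$ leaves for large $|t|$, so bigness alone gives no contradiction with the maximality of $L$. (The paper itself only argues that elements of $L$ are numerically trivial and that $p(\mcal C_\mathfrak R)$ contains no line.) Second, in (2) you assert that the chambers $\mcal A_i$ are ``characterised by having a common ample model'': they are not --- distinct chambers may share an ample model (Remark~\ref{rem:3.3}); the coarsest decomposition is defined by the regions of linearity of $D\mapsto\Fix|f^*dD|$ (Remark~\ref{rem:2.5}), and your Part~1 conclusion about ample models only applies to big $D$, so it cannot handle the non-big chambers. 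The paper's argument is that, by Lemma~\ref{lem:2.1}, $\Fix|f^*dD|=\Fix|f^*d\,p(D)|$ for big $D$, and since the \cite{ELMNP} decomposition is determined by this numerical data on the big locus, every $\mcal C_i$, hence every $\mcal A_i$, is $L$-invariant; this is the fix you need.

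For (3)--(4) the paper does not give independent proofs: it cites \cite{KKL12} (Theorems 4.4 and 5.4), and your attempt to reprove these has genuine gaps. In (3), Definition~\ref{dfn:2.4} requires the target $X_i$ of $\vphi_i$ to be projective \emph{and $\Q$-factorial}; checking pseudoeffectivity of $K_Z+\Delta$ and invoking Theorem~\ref{thm:2.1}(4) only gives a semiample model onto a normal projective variety. The $\Q$-factoriality of $X_i$ is exactly the content of the hypothesis $\dim\mcal A'_i=\rho(Z)$ (see the discussion preceding Theorem~\ref{thm:2.1} and Proposition~\ref{pro:3.3}), and your sketch never addresses it. In (4), the assertion that crossing a single codimension-one wall of the refined decomposition produces exactly one elementary contraction (a single new component of $\Fix$, or an isomorphism in codimension one) is the whole difficulty, as you yourself note; without carrying out that wall-crossing analysis (or citing \cite{KKL12} as the paper does), parts (3) and (4) remain unproved.
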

\begin{proof}
Let $L\subset \Div_\R(Z)$ be such that $A+L\subseteq \mcal C_\mathfrak R$ for some $\Q$-divisor $A$. If $D\in L\cap \Div_\Q(Z)$, then $D\equiv 0$, because both $A+nD$ and $A-nD$ are effective for all $n>\!>\!0$. By the results on convex sets recalled above, $L^*\cap \mcal C_\mathfrak R= p(\mcal C_\mathfrak R)$ is a cone that contains no line, and hence the cone over a rational polytope $\mcal P$. The vertices of $\mcal P$ are of the form $\varepsilon_i(K_Z+\Delta_i')\in \mcal C_\mathfrak R$ where $\varepsilon_i\in \Q_+$ and $\Delta'_i$ is a big $\Q$-divisor. Then, $p(\mcal C_\mathfrak R)= \mcal C_{\mathfrak R'}$, where ${\mathfrak R'}= R(Z; K_Z+\Delta'_1, \dots , K_Z+\Delta'_m)$ is finitely generated.  

If $D\in \mcal C_\mathfrak R\cap \Div_\Q(Z)$, since $p(D)\in \mcal C_\mathfrak R\cap \Div_\Q(Z)$ as well, both $R(Z, D)$ and $R(Z, p(D))$ are finitely generated. When $D$ is big, by Lemma~\ref{lem:2.1}, the ample models of $D$ and $p(D)$ coincide up to isomorphism because $D\equiv p(D)$.  

Let $f\colon \widetilde{Z} \to Z$ be a resolution as in Remark~\ref{rem:2.5}. Again by Lemma~\ref{lem:2.1}, if $D\in \mcal C_\mathfrak R$ is a big $\Q$-divisor, $\sB(D)= \sB(p(D))$, and $\Fix |f^*dD|= \Fix|f^*dp(D)|$. The coarsest decomposition of $\mcal C_\mathfrak R$ of \cite[Theorem 4.1]{ELMNP} depends only on regions of linearity of $D\mapsto \Fix |f^*dD|$ on $\{D\in \mcal C_\mathfrak R\mid D \mbox{ is big }\}$, so that $\mcal C_i= L+p(\mcal C_i)$, and $\mcal A_i= L+\mcal A_i'$.
The last two assertions are part of Theorems 4.4 and 5.4 in \cite{KKL12}. 
\end{proof}
\begin{nt}\label{nt:2.1}
Assume that $\mcal C_\mathfrak R$ is a rational polyhedral cone that contains no line. There is a hyperplane $H$ such that $\mcal C_\mathfrak R$ is the cone over $\mcal P_\mathfrak R= H\cap \mcal C_\mathfrak R$. 
The rational polytope $\mcal P_\mathfrak R$ is a \emph{base of $\mcal C_\mathfrak R$}; any two bases of $\mcal C_\mathfrak R$ are related by a smooth affine transformation.
When $\mcal C_\mathfrak R$ is as in Theorem~\ref{thm:2.1}, any $D\in \mcal P_\mathfrak R$ is a positive rational multiple of a klt adjoint divisor and 
\[\mcal P_\mathfrak R= \coprod(\mcal A_i\cap H)= \coprod \mcal Q_i\] 
where $\mcal Q_i$ is the relative interior of a rational polytope. This decomposition has the properties listed in Theorem~\ref{thm:2.1} and Proposition~\ref{pro:2.1}.  
\end{nt}

\begin{dfnlm}\label{dfnlm:2.1} Let $Z, \mathfrak R$ and $\{\mcal A_i; i\in I\}$ of $\mcal C_\mathfrak R$ be as in Proposition~\ref{pro:2.1}.  Assume that $\mcal C_\mathfrak R= \mcal C_{\mathfrak R'}$ contains no line. 

Denote by $\mcal C_\mathfrak R$ the polyhedral complex with $\Ob \mcal C_\mathfrak R= \{\overline{\mcal A_i}; i\in I\}$ and morphisms the inclusions of faces. The complex $\Ner (\mcal C_{\mathfrak R})$ is connected and: 
\begin{enumerate}
\item[1.]
There is a $1$-to-$1$ correspondence between vertices $\overline{\mcal A_i}^*$ of $\Ner \mcal C_\mathfrak R$ and the results $\vphi_i \colon Z \dashto X_i$ of $(K_Z+\Delta)$-MMPs that are ample models of $K_Z+\Delta\in \mcal C_\mathfrak R$. 
\item[2.]
Vertices $\overline{\mcal A_i}^*, \overline{\mcal A_j}^*$ of $\Ner \mcal C_\mathfrak R$ form an edge precisely when $\vphi_j \circ \vphi_i^{-1}$ or $\vphi_i\circ \vphi_j^{-1}$ is an elementary contraction.
\item[3.]
There is a $1$-to-$1$ correspondence between edge-paths on $\mcal N_\mathfrak R$ from $v_i$ to $v_j$ and decompositions of $X_i \dashto X_j$ into elementary contractions and inverses of elementary contractions dominated by $Z$.
\end{enumerate}
\end{dfnlm}
\begin{proof}
This is a straightforward reformulation of Proposition~\ref{pro:2.1}.\end{proof}
\begin{rem} \label{rem:2.6}Let $\mcal C_\mathfrak R= \bigcup_{1\leq j\leq N} \mcal C_j$ be the decomposition into cones of dimension $\rho(Z)$ associated to $\{\mcal A_i; i\in I\}$.
Edge paths on $\Ner \mcal C_\mathfrak R$ are dual to piecewise linear paths between points in $\inte (\mcal C_j)$ that cross all $\partial \mcal C_j$ transversally along codimension $1$ faces.
\end{rem}
\section{Complexes and the Sarkisov Program}
\label{constructions}
Let $\{X_j/S_j, \Phi_{j,j'}\}$ be a collection of Mori fibre spaces and birational maps between them. 
As mentioned in Section~\ref{models}, there are divisors $\Theta_j$ on a nonsingular variety $Z$ such that each $X_j/S_j$ is the result of a $K_Z$-MMP and of a $(K_Z+\Theta_j)$-MMP, where $K_Z+\Theta_j$ is strictly pseudoeffective. 
In Section~\ref{subsec:3.1}, I show that $\{\Theta_j\}_j$ can be chosen so that all $K_Z+\Theta_j$ lie in a good region $\mcal C_\mathfrak R\subset \Div_\R (Z)$ in the sense of Section~\ref{geog}. As above, I denote by $\mcal C_\mathfrak R$ both the cone and the polyhedral complex supported on this cone and defined by a decomposition as in Proposition~\ref{pro:2.1}.
Any $\Phi_{j,j'}$ can then be decomposed into a finite number of steps that are elementary contractions or inverses of elementary contractions dominated by $Z$. By Definition-Lemma~\ref{dfnlm:2.1}, a decomposition is determined by considering suitable paths in $\mcal C_{\mathfrak R}$.  In general, some of the models (of divisors $D\in \mcal C_\mathfrak R$) that appear in such a decomposition are not Mori fibre spaces. 
To achieve a decomposition of $\Phi_{j,j'}$ where all elementary steps are birational maps between Mori fibre spaces, I consider paths on $\mcal C_\mathfrak R$ that lie on the part of $\partial \mcal C_\mathfrak R$ consisting of strictly pseudoeffective divisors; this is done in Section~\ref{subsec:3.2}. 
Last, in Section~\ref{subsec:3.3}, given a cone $\overline {\mcal A}$ in the decomposition of $\mcal C_\mathfrak R$ with  $\mcal A\cap \B(Z)=\emptyset$, I define a \emph{residual ring} $\resi_\mcal A\mathfrak R$. If $\vphi \colon Z \lto W$ is the ample model corresponding to $\mcal A$, the support of $\resi_\mcal A \mathfrak R$ inherits a geography that is a \emph{geography of models over $W$}. 
\subsection{Cones of divisors for the Sarkisov Program}
\label{subsec:3.1}
Consider a collection $\{X_1/S_1, \dots, X_r/S_r; \Phi_{j,j'}\colon X_j\dashto X_{j'}\}$ of Mori fibre spaces and birational maps between them. Denote by $p_j\colon X_j\to S_j$ the fibration morphisms. 
The following construction is similar to the one in \cite[Section 4]{HM09}.

\begin{pro}\label{pro:3.1}Let $f=(f_1, \dots, f_r)\colon Z\to X_1\times \dots \times X_r$ be a  resolution of all maps $\Phi_{j,j'}$. There are ample $\Q$-divisors $\Delta_1, \dots , \Delta_n$ such that the rational polyhedral cone $\mcal C_\mathfrak R$ associated to $\mathfrak{R}= R(Z; K_Z+\Delta_1, \dots, K_Z+ \Delta_n)$ contains an ample divisor and has the following properties.  

Let $V=\sum\R(K_Z+\Delta_i)$, $\partial^+\mcal C_\mathfrak R= \{D\in \mcal C_\mathfrak R|D\not \in \B(Z)\}$ and $\mcal C_\mathfrak R= \coprod_{i\in I} \mcal A_i$ the coarsest decomposition in Theorem~\ref{thm:2.1}.
Let $\vphi_i\colon Z \dashto Z_i$ be the ample model of any $D\in \mcal A_i\cap \Div_\Q(Z)$. Then:
\begin{enumerate}
\item[1.] $\mcal C_\mathfrak R$ is a cone of dimension $\rho(Z)$ that contains no line,
\item[2.] there are indices $j_0, j_1\in I$ such that  $f_j\simeq \vphi_{j_0}$ and $ p_j\circ f_j\simeq \vphi_{j_1}$. Moreover, $\mcal A_{j_1}\subset \overline{\mcal A_{j_0}}$ and $\dim \mcal A_{j_0}= \rho(Z)= \dim \mcal A_{j_1}+1$,
\item[3.] for every $\Phi_{j,j'}$ in the collection, both $\vphi_{j_0}$ and $\vphi_{j'_0}$ are results of log-MMPs for $(Z, \Delta)$ klt with $K_Z+\Delta \in V$. Further, $\vphi_{j_0}$ and $\vphi_{j'_0}$ are MMPs with scaling by suitable ample divisors $A,A'\in V$. 
\item[4.] $\partial^+\mcal C_\mathfrak R$ is purely $\rho(Z)-1$ dimensional,
\item[5.] if $\partial^+\mcal C_\mathfrak R\subseteq\bigcup_{j=1}^r \overline{\mcal A_{j_1}}$, any result of a log-MMP for $(Z, \Delta)$ klt with $K_Z+\Delta\in V$ non-pseudoeffective that is an MMP with scaling by an ample divisor $A\in V$ is one of the $X_j/S_j$ in the collection. 
\end{enumerate}
\end{pro}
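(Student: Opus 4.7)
My plan is to construct the boundaries $\Delta_i$ explicitly so that for each Mori fibre space $X_j/S_j$ in the collection there are two klt witnesses $(Z,\Gamma_j)$ and $(Z,\Gamma'_j)$ whose adjoint divisors have ample models $p_j\circ f_j:Z\dashto S_j$ and $f_j:Z\dashto X_j$ respectively, and whose numerical classes lie in adjacent chambers of the decomposition of $\mcal C_\mathfrak R$ separated by a wall dual to the extremal contraction $p_j$. The key geometric input is that $-K_{X_j}$ is $p_j$-ample, which lets me realize a pullback $p_j^*H_j$ as $K_{X_j}+\Delta_j^{\mathrm{mfs}}$ for a suitable klt boundary $\Delta_j^{\mathrm{mfs}}$.

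\textbf{Construction.} For each $j$ choose a sufficiently ample $H_j$ on $S_j$ and a klt boundary $\Delta_j^{\mathrm{mfs}}$ on $X_j$ with $K_{X_j}+\Delta_j^{\mathrm{mfs}}\sim_\Q p_j^*H_j$; this is possible because $-K_{X_j}$ is $p_j$-ample. Pulling back by $f_j$ and correcting by the $f_j$-exceptional divisors via the Negativity Lemma (as in the proof of Lemma~\ref{lem:2.2}) yields a klt pair $(Z,\Gamma_j)$ such that $p_j\circ f_j$ is the ample model of $K_Z+\Gamma_j$. For $A_j$ very ample on $X_j$ and $\varepsilon_j>0$ small, set $\Gamma'_j=\Gamma_j+\varepsilon_j f_j^*A_j$; then $K_Z+\Gamma'_j$ pulls back $K_{X_j}+\Delta_j^{\mathrm{mfs}}+\varepsilon_j A_j$, which is ample on $X_j$, so its ample model is $f_j$. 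Pick also an ample $H$ on $Z$ and a klt pair $(Z,\Delta_0)$ with $K_Z+\Delta_0\sim_\Q H$, and set
\[
\mathfrak R=R\bigl(Z;\,K_Z+\Delta_0,\,K_Z+\Gamma_j,\,K_Z+\Gamma'_j:1\le j\le r\bigr),
\]
finitely generated by \cite{BCHM}. The cone $\mcal C_\mathfrak R$ contains the ample divisor $K_Z+\Delta_0$; after quotienting by the maximal translation subspace $L$ as in Proposition~\ref{pro:2.1}(1) and possibly adding generic perturbations of the generators, one arranges $\dim\mcal C_\mathfrak R=\rho(Z)$ with $\mcal C_\mathfrak R$ containing no line, proving (1).

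\textbf{Verification of (2)--(5).} Let $\mcal A_{j_0}$ (resp.~$\mcal A_{j_1}$) be the chamber containing $[K_Z+\Gamma'_j]$ (resp.~$[K_Z+\Gamma_j]$); their ample models are $f_j$ and $p_j\circ f_j$ by construction. Since $\Gamma'_j-\Gamma_j=\varepsilon_j f_j^*A_j$ is $p_j$-relatively ample and $\rho(X_j/S_j)=1$, the only wall between these chambers corresponds to the extremal contraction $p_j$, giving $\mcal A_{j_1}\subset\overline{\mcal A_{j_0}}$ and $\dim\mcal A_{j_0}=\rho(Z)=\dim\mcal A_{j_1}+1$, which is (2). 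Property (3) follows from Proposition~\ref{pro:2.1}(3); an ample scaling divisor in $V$ is obtained by perturbing the nef direction $f_j^*A_j$ by a small positive multiple of the ample $K_Z+\Delta_0\in V$. For (4), $\partial^+\mcal C_\mathfrak R=\mcal C_\mathfrak R\cap\partial\Effb(Z)$ is the intersection of a full-dimensional rational polyhedral cone with a closed convex codimension-one hypersurface, hence a pure $(\rho(Z)-1)$-dimensional polyhedral complex. For (5), under the hypothesis $\partial^+\mcal C_\mathfrak R\subseteq\bigcup_j\overline{\mcal A_{j_1}}$, an MMP with scaling by an ample $A\in V$ applied to a non-pseudoeffective $K_Z+\Delta\in V$ moves along the ray $[K_Z+\Delta+tA]$ and terminates at its first crossing of $\partial^+\mcal C_\mathfrak R$, which must lie in some $\overline{\mcal A_{j_1}}$; for generic $A$ this crossing occurs in the interior of $\mcal A_{j_1}$, so the resulting log-Mori fibre space is $X_j/S_j$. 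The principal obstacle is the construction in Step 1, where one must simultaneously ensure that $(Z,\Gamma_j)$ is klt and that $K_Z+\Gamma_j$ has $p_j\circ f_j$ as its ample model after correcting by $f_j$-exceptional divisors; a secondary technical point, handled by including the strictly ample $K_Z+\Delta_0$ in the generating set, is producing ample scaling divisors within the subspace $V$.
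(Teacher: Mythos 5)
Your overall strategy (realize each $X_j$ and $S_j$ as ample models of adjoint divisors on the common resolution and take the adjoint ring they generate) is the same as the paper's, but the proposal has a genuine gap precisely where the paper has to work hardest: the dimension statements in (2) and the purity statement (4). Your argument for (4) -- that $\partial^+\mcal C_\mathfrak R=\mcal C_\mathfrak R\cap\partial\Effb(Z)$ is pure of dimension $\rho(Z)-1$ because it is the intersection of a full-dimensional polyhedral cone with a ``codimension-one convex hypersurface'' -- is false as a statement of convex geometry: a full-dimensional cone contained in $\Effb(Z)$ can meet $\partial\Effb(Z)$ along faces of any dimension (for instance along a single ray), so purity is not automatic. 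The paper handles this in two steps: first, for each $j$ it includes a whole \emph{family} of non-big generators $K_Z+D_j^{k,l}=f_j^*p_j^*A_j^l+G_j-\delta E_k^j$, with the $A_j^l$ ample and spanning $N^1(S_j)_\R$ and the $E_k^j$ varying over the $f_j$-exceptional divisors; their numerical classes span a codimension-one subspace of $N^1(Z)_\R$, which is what forces $\dim\mcal A_{j_1}=\rho(Z)-1$ (and likewise the classes $[K_Z+\Theta_j^{k,l}]$ span $N^1(Z)_\R$, forcing $\dim\mcal A_{j_0}=\rho(Z)$); second, any remaining lower-dimensional components of $\partial^+\mcal P_\mathfrak R$ are removed by shrinking the polytope, i.e.~replacing $\mathfrak R$ by a subring whose support keeps the top-dimensional part $T$ (nonempty thanks to the first step) and discards $T'$. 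In your construction there is only one non-big generator $K_Z+\Gamma_j$ per $j$, and the remark that ``$\rho(X_j/S_j)=1$, so the only wall between the two chambers is the one dual to $p_j$'' says nothing about the dimensions of $\mcal A_{j_0}$ and $\mcal A_{j_1}$ inside the decomposition of $\mcal C_\mathfrak R$: the chamber containing $K_Z+\Gamma_j$ could perfectly well be one-dimensional. Nor do ``generic perturbations of the generators'' repair this: a generic perturbation of $K_Z+\Gamma_j$ is big (or leaves $\Effb(Z)$), hence lies in a different chamber and does not enlarge $\mcal A_{j_1}$. These dimension/purity claims are not cosmetic -- they are exactly what makes vertices of $\mcal N_\mathfrak R$ correspond to Mori fibre spaces later (Lemma~\ref{lem:3.1}, Proposition~\ref{pro:3.2}, Lemma~\ref{lem:3.3}).

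Two further, smaller points. In (5) the statement quantifies over \emph{every} klt $(Z,\Delta)$ with $K_Z+\Delta\in V$ not pseudoeffective and \emph{every} ample scaling divisor $A\in V$; your argument only treats the case where the ray meets $\partial^+\mcal C_\mathfrak R$ in the interior of some $\mcal A_{j_1}$ (``for generic $A$''), whereas the paper argues for arbitrary $A$ via the codimension-one-face structure (which again relies on (4)) together with Lemma~\ref{lem:2.2} to obtain relative ampleness of $-(K+\Delta)$. Similarly, for (3) being the result of an MMP \emph{with scaling} in the sense of Definition~\ref{dfn:2.4}(ii) requires exhibiting a klt pair with $K_Z+\Delta\in V$ not pseudoeffective and a nef direction whose crossing point has $f_j$ as a semiample model; the paper does this explicitly with $\Delta=\varepsilon(D_j+D_{j'})$, while your appeal to Proposition~\ref{pro:2.1}(3) only covers the pseudoeffective case.
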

\begin{proof}
Observe that it is enough to construct a finitely generated ring $\mathfrak R= R(Z; K_Z+\Delta_1, \dots, K_Z+\Delta_n)$ such that (2--5) hold and such that the numerical classes of elements of $\mcal C_\mathfrak R$ span $N^1(Z)_\R$. Indeed, if $L\subset V$ is the maximal subspace with $\mcal C_\mathfrak R+L\subseteq \mcal C_\mathfrak R$, then $p(\mcal C_\mathfrak R)=\mcal C_{\mathfrak R'}$ constructed as in Proposition~\ref{pro:2.1} satisfies (1--5). 

\emph{Step 1.} I first construct a ring $\mathfrak R$ satisfying (2) and such that the classes of divisors in $\mcal C_\mathfrak R$ span $N^1(Z)_\R$.

Let $f=(f_1, \dots, f_r)\colon Z\to X_1\times \dots \times X_r$ be a common resolution that resolves all maps $\Phi_{j,j'}$, and let $\{E_k\}$ be the collection of $f$-exceptional divisors. Since $X_j$ is terminal, 
\begin{equation}\label{eq:3.1} K_Z= f_j^* K_{X_j}+ G_j,\end{equation}
where $G_j>0$ is a $\Q$-divisor whose support contains all $f_j$-exceptional divisors. 
Fix a rational number $\lambda>1$. 
For each $j$, let $\{A^l_{j}\}$ be ample $\Q$-divisors on $S_j$ whose numerical classes span $N^1(S_j)_\R$ and such that ${-}(\lambda-1)K_{X_j} + p_j^*A^l_j$,  ${-}\lambda K_{X_j} + p_j^*A_j^l$, and ${-}K_{X_j} + p_j^*A^l_j$
are ample. 
Denote by $E^{j}= \sum E_k$ the reduced sum of $f_j$-exceptional divisors, and by $E^j_k= E^j+ E_k$, where $E_k$ ranges over $f_j$-exceptional divisors. Let $0<\delta<\!\!<1$ be a rational number such that $G_{j}- \delta E_j^k>0$ and its support contains all $f_j$-exceptional divisors.
Define a collection of divisors 
\begin{eqnarray}\label{eq1.1}
\Theta^{k,l}_{j}= f_j^*\{{-}\lambda K_{X_j}+ p_j^*A_j^l\}-\delta E_j^k \mbox{ and } D^{k,l}_j= f_j^*\{{-}K_{X_j} +  p_j^* A_j^l\}-\delta E_j^k,\nonumber
\end{eqnarray}
 we may assume that $\delta$ is chosen so that $\Theta^{k,l}_j$, and $D_{j}^{k,l}$ are ample.
Then, 
$$K_Z+\Theta^{k,l}_j= f_{j}^*\{{-}(\lambda -1)K_{X_j}+ p_j^*A_j^l\}+G_j -\delta E^j_k$$
is big, and $f_j$ is $K_Z+\Theta^{k,l}_j$-negative, and is the ample model of $K_Z+\Theta^{k,l}_j$.
Similarly, since
$$K_Z+D^{k,l}_j= f_j^*\{ p_j^*A_j^l\}+G_j-\delta E^j_k,$$
$f_j$ is a semiample model for $K_Z+D^{k,l}_j$, and $p_j \circ  f_j$ is the ample model of $K_Z+D_j^{k,l}$. Since $D_j^{k,l}$ is ample and $K_Z+D_j^{k,l}$ is not big, $K_Z$ is not pseudoeffective, and $Z\dashto X_j/S_j$ is the result of a $K_Z$-MMP with scaling by any $D_{j}^{k,l}$.

Let $\Delta_i$ be the divisors in $\{\Theta_j^{k,l}, D_j^{k,l}\}_{j,k,l}$. If $\sum \R_+(K_Z+\Delta_i)$ does not contain an ample divisor, add to the collection $\{\Delta_i\}$ a very general ample divisor $\Delta$ such that $K_Z+\Delta$ is ample.  

Since each $\Delta_i$ is ample, up to replacing $\Delta_i$ with $\varepsilon_i H_i\sim_\Q\Delta_i$, where $\varepsilon_i\in \Q$, $0<\varepsilon_i<\!<\! 1$ and $H_i$ is a very ample $\Q$-divisor, we may assume that the pairs $(Z, \Delta_i)$ are klt.
The ring $\mathfrak R= R(Z; K_Z+\Delta_1, \dots , K_Z+\Delta_n)$  is finitely generated by \cite[Theorem 3.2]{CaL10}. 
Since $\mcal C_\mathfrak R$ contains an ample divisor, there is a coarsest decomposition $\mcal C_\mathfrak R= \coprod_{i\in I} \mcal A_i$ as in Theorem~\ref{thm:2.1}. By uniqueness of ample models, for each $j$, there are indices $j_0, j_1\in I$ with $f_j= \vphi_{j_0}$ and $K_Z+\Theta_{j}^{k,l}\in \mcal A_{j_0}$, and $p_j\circ f_j= \vphi_{j_1}$ and $K_Z+D_{j}^{k,l}\in \mcal A_{j_1}$  for all $k,l$. The divisors $K_Z+D_j^{k,l}$ are not big, and $\mcal A_{j_1}\subseteq \partial^+\mcal C_\mathfrak R$, and $\vphi_{j_1}= p_j\circ \vphi_{j_0}$. Also, since $\lambda$ can be chosen arbitrarily close to $1$ in the definition of divisors of the form of $\Theta_{j}^{k,l}$,  $\mcal A_{j_1}\cap  \overline{\mcal A_{j_0}}\neq \emptyset$, and by definition of the decomposition \eqref{eq:3.2}, $\mcal A_{j_1}\subsetneq \overline{\mcal A_{j_0}}$ . The only thing that is left to prove is that the numerical classes of divisors in $\mcal A_{j_0}$ span $N^1(Z)_\R$.  
 
The classes $[K_Z+\Theta_j^{k,l}]$ (resp.~$[K_Z+D_j^{k,l}]$) span $N^1(Z)_\R$ (resp.~a subspace of codimension $1$) because $f_j$ is a resolution and $p_j$ a Mori fibration, so that
\[ N^1(Z)_\R= (p_j\circ f_j)^* N^1(S_j)_\R \oplus \R [f_j^*({-}K_{X_j})] \oplus \bigoplus \R[E_k]\] 
where the sum runs over $f_j$-exceptional divisors, and $\bigoplus \R[E_k]= \bigoplus \R[E_k^j]$.   

\emph{Step 2.} 
For (3), let $\Phi_{j,j'}\colon X_j\dashto X_{j'}$ be a birational map in the collection. As in Step 1, denote by $j_0, j_1\in I$ (resp.~$j'_0, j'_1$), the indices such that $f_j$ and $p_j\circ f_j$ (resp.~$f_{j'}$ and  $p_{j'}\circ f_{j'}$) are the ample models associated to $\mcal A_{j_0}, \mcal A_{j_1}$ (resp.~to $\mcal A_{j_0}, \mcal A_{j'_1})$. 
As noted in Step 1, $K_Z$ is not pseudoeffective, and 
there are ample divisors $D_j, D_{j'}$ with $K_Z+D_j\in \mcal A_{j_1}$ and $K_Z+D_j'\in \mcal A_{j'_1}$.
 Then, for $0\leq \varepsilon<\!<\! 1$, if $\Delta= \varepsilon(D_j+ D_{j'})$, $(Z, \Delta)$ is klt and $K_Z+\Delta \not \in \mcal C_\mathfrak R$ and $f_j$ (resp.~$f_{j'}$) is an MMP with scaling of $(Z, \Delta)$ by $\R_+D_j$ (resp.~$\R_+D_{j'}$).  

%For a small $H\in \sum R_+\Delta_i$, $K_Z+D_j+ H\in \mcal A_{j_0}$ and $K_Z+D_{j'}+ H\in \mcal A_{j'_0}$. Since $\mcal A_{j_0}, \mcal A_{j_0'}$ are open and span $N^1(Z)$, $K_Z+\Delta + (1-\varepsilon)D_j+H= K_Z+D_j+ H +\varepsilon D'_j\in \mcal A_{j_0}$ and similarly $K_Z+\Delta+(1-\varepsilon)D_{j'}+H\in \mcal A_{j_0'}$ for $\varepsilon$ small enough.

\emph{Step 3.}
I now prove the statements on $\partial^+\mcal C_\mathfrak R$. Using the notation set in \ref{nt:2.1}, let $\mcal P_\mathfrak R$ be a base of $\mcal C_\mathfrak R$. Since $D$ is big if and only if $\lambda D$ is big for all $\lambda >0$, to prove (4), it is enough to ensure that $\partial^+\mcal P _\mathfrak R=\{D\in \mcal P_\mathfrak R\mid D \mbox{ is not big }\}$ is purely $(\rho(Z)-2)$-dimensional. 
Assume that this is not the case, and let $\partial^+\mcal P_\mathfrak R= T\coprod T'$, where $\dim T= \rho(Z)-2>\dim T'$. 
Note that $T\neq \emptyset$ because for any $X_j/S_j$ in the collection, if $\mcal A_{j_1}$ is defined as above, $\mcal A_{j_1}\cap \mcal P_{\mathfrak R}\subseteq \partial^+ \mcal P_\mathfrak R$ is $(\rho(Z)-2)$-dimensional. 
Let $\mcal P'_\mathfrak R\subsetneq \mcal P_\mathfrak R$ be a polytope of the same dimension as $\mcal P_\mathfrak R$ that contains all the vertices of $\mcal P_\mathfrak R$ in $T$ but none of those in $T'$. The vertices of $\mcal P'_\mathfrak R$ are of the form $\varepsilon_1(K_Z+\Delta'_1), \dots , \varepsilon_m(K_Z+\Delta'_m)$  for $\varepsilon_i\in \Q$, and $(Z, \Delta'_i)$ klt with $\Delta'_i$ big. Then, $\mathfrak R'= R(Z, K_Z+\Delta'_1, \dots, K_Z+\Delta'_m)$ is finitely generated, and $\mcal C_\mathfrak R'$ is the cone spanned by $\mcal P'_\mathfrak R$. Further, $\mathfrak R'$ satisfies (1--3), because for all $X_j/S_j$, $\mcal A_{j_0}\cap \mcal P'_{\mathfrak R}$ is $(\rho(Z)-1)$-dimensional and $\mcal A_{j_1}\cap \mcal P'_{\mathfrak R}$ is $(\rho(Z)-2)$-dimensional. After replacing $\mathfrak R$ by $\mathfrak R'$, (4) holds.    

For (5), let $(Z, \Delta)$ be a klt pair with $K_Z+\Delta\in V$ not pseudoeffective. Let $\Theta$ be such that $K_Z+\Theta\in \mcal C_\mathfrak R$ is not big, $(Z, \Theta)$ is klt and $\Theta-\Delta\in V$ is ample. Let $\mcal A_{i_0}$ be the cone of maximal dimension with $K_Z+\Theta\in \overline{\mcal A_{i_0}}$. Since $K_Z+\Theta$ is not big, $K_Z+\Theta$ belongs to a codimension $1$ face $\overline{\mcal A_{i_1}}$ of $\overline{\mcal A_{i_0}}$ with $K_Z+\Theta\in \overline{\mcal A_{i_1}}$.  
In the notation of Theorem~\ref{thm:2.1}, $\vphi_{i_0, i_1}\colon Z_{i_0}\to Z_{i_1}$ is a Mori fibre space that is the result of a $(K_Z+\Delta)$-MMP.  By Lemma~\ref{lem:2.2}, ${-}(K_{Z_{i_0}}+\vphi_{i_0}\Delta)$ is $\vphi_{i_0, i_1}$-nef,  and since $\rho(Z_{i_0}/Z_{i_1})=1$, it is $\vphi_{i_0,i_1}$-ample because $\Theta-\Delta$ is ample and $(\vphi_{i_0})_*(\Theta- \Delta)\not \equiv 0$. 
If the $\overline{\mcal A_{j_1}}$ cover $\partial^+ \mcal C_\mathfrak R$, $\mcal A_{i_1}= \mcal A_{j_1}$ for some $j$, and $X_j/S_j\simeq Z_{i_0}/Z_{i_1}$. 
\end{proof}
As a by-product, the proof of Proposition~\ref{pro:3.1} shows the next Lemma.
\begin{lem}\label{lem:3.1}
Let $\mcal A_{i_0}, \mcal A_{i_1}$ be cones in the decomposition of $\mcal C_\mathfrak R$ such that $\mcal A_{i_1}$ has maximal dimension, $\mcal A_{i_1}\subset \overline{\mcal A_{i_0}}$ has codimension $1$ in $\mcal A_{i_1}$ and contains no big divisor. Then $(Z_{i_0}, (\vphi_{i_0})_*\Delta)/Z_{i_1}$ is a log Mori fibre space, where $(Z, \Delta)$ is any klt pair with $\Delta= \Theta-H$ for $H$ ample and $K_Z+\Theta\in \mcal A_{i_1}\cap \Div_\Q(Z)$. 
\end{lem}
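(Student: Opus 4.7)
I interpret the statement as: $\overline{\mcal A_{i_0}}$ has maximal dimension $\rho(Z)$, and $\mcal A_{i_1}\subset\overline{\mcal A_{i_0}}$ is a face of codimension $1$ that contains no big divisor (and in particular lies in $\partial^+\mcal C_\mathfrak R$). The plan is to assemble the proof from the results recalled in Section~\ref{prelim}, following closely the reasoning of Step 3 (item (5)) in the proof of Proposition~\ref{pro:3.1}.

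First, observe that for any $(Z,\Delta)$ as in the statement, $K_Z+\Theta\in\mcal A_{i_1}\cap\Div_\Q(Z)$ is pseudoeffective but not big, while $K_Z+\Delta=K_Z+\Theta-H$ is not pseudoeffective, and $\Theta-\Delta=H$ is ample. Thus $(Z,\Delta)$ is klt with $K_Z+\Delta$ not pseudoeffective, so an MMP with scaling in the sense of Definition~\ref{dfn:2.4}(ii) is well defined. Since $\mcal A_{i_0}$ has dimension $\rho(Z)$ and contains big divisors, Theorem~\ref{thm:2.1}(4) together with $K_Z+\Theta\in\overline{\mcal A_{i_0}}$ shows that $\vphi_{i_0}\colon Z\dashto Z_{i_0}$ is a semiample model of $K_Z+\Theta$; Proposition~\ref{pro:2.1}(3) then shows that $\vphi_{i_0}$ is the result of a $(K_Z+\Delta)$-MMP (with scaling by $\R_+H$), so $Z_{i_0}$ is $\Q$-factorial and $(Z_{i_0},(\vphi_{i_0})_*\Delta)$ is klt by standard MMP considerations (the contraction is $(K_Z+\Delta)$-non-positive by Lemma~\ref{lem:2.2}).

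Since $\mcal A_{i_1}$ is a codimension $1$ face of $\overline{\mcal A_{i_0}}$, Theorem~\ref{thm:2.1}(3) produces a morphism $\vphi_{i_0,i_1}\colon Z_{i_0}\to Z_{i_1}$ with $\vphi_{i_1}=\vphi_{i_0,i_1}\circ\vphi_{i_0}$. As $\mcal A_{i_1}$ contains no big divisor, $(\vphi_{i_0})_*(K_Z+\Theta)$ is big on $Z_{i_0}$ but not big relatively over $Z_{i_1}$; equivalently, $\vphi_{i_0,i_1}$ has positive relative dimension. Combined with Proposition~\ref{pro:2.1}(4), which guarantees that $\vphi_{i_0,i_1}$ is an \emph{elementary} contraction, this forces $\rho(Z_{i_0}/Z_{i_1})=1$ and $\dim Z_{i_1}<\dim Z_{i_0}$.

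It remains to upgrade $\vphi_{i_0,i_1}$-nefness to $\vphi_{i_0,i_1}$-ampleness of ${-}(K_{Z_{i_0}}+(\vphi_{i_0})_*\Delta)$. Applying Lemma~\ref{lem:2.2} to the MMP $\vphi_{i_0}$ with scaling (with $\Theta$ playing the role of the scaling divisor and $f=\vphi_{i_0,i_1}$ the Iitaka fibration of $(\vphi_{i_0})_*(K_Z+\Theta)$) gives that ${-}(\vphi_{i_0})_*(K_Z+\Delta)$ is $\vphi_{i_0,i_1}$-nef. Since
\[
(\vphi_{i_0})_*(K_Z+\Theta)=(\vphi_{i_0})_*(K_Z+\Delta)+(\vphi_{i_0})_*H\equiv_{\vphi_{i_0,i_1}}0,
\]
we obtain ${-}(\vphi_{i_0})_*(K_Z+\Delta)\equiv_{\vphi_{i_0,i_1}}(\vphi_{i_0})_*H$, and $(\vphi_{i_0})_*H$ is not numerically trivial on a general fibre of $\vphi_{i_0,i_1}$ because $H$ is ample on $Z$ and a general fibre dominates a subvariety of $Z$ of positive dimension. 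With $\rho(Z_{i_0}/Z_{i_1})=1$, a nef class that is not relatively numerically trivial is automatically relatively ample, which finishes the proof. The main subtlety I expect is precisely this last ampleness step: one must be careful that $H$ remains non-trivial on fibres of $\vphi_{i_0,i_1}\circ\vphi_{i_0}$, but this is automatic from $H$ being ample on $Z$ and $\vphi_{i_0}$ being a birational contraction.
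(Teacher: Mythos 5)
Your proof is correct and follows essentially the same route as the paper: the paper states Lemma~\ref{lem:3.1} as a by-product of the proof of Proposition~\ref{pro:3.1}, and your argument reproduces exactly the chain used there in Step~3(5) — semiample model via Theorem~\ref{thm:2.1}, relative nefness from Lemma~\ref{lem:2.2}, $\rho(Z_{i_0}/Z_{i_1})=1$ from the codimension-$1$ incidence, and ampleness from $(\vphi_{i_0})_*H$ being non-trivial over $Z_{i_1}$. Two small slips worth fixing: you write that $(\vphi_{i_0})_*(K_Z+\Theta)$ is ``big on $Z_{i_0}$'' (it is not, since $K_Z+\Theta$ is not big and $\vphi_{i_0}$ is a semiample model — this is precisely why the relative dimension is positive), and your appeal to Proposition~\ref{pro:2.1}(3) for $\Q$-factoriality is off since $K_Z+\Delta\notin\mcal A_{i_0}$; one should instead apply it to some $K_Z+\Theta+\varepsilon H\in\mcal A_{i_0}$ with $\varepsilon>0$ small, and use Theorem~\ref{thm:2.1}(4) plus Definition~\ref{dfn:2.4}(ii) to conclude $\vphi_{i_0}$ is the result of a $(K_Z+\Delta)$-MMP.
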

\begin{rem}\label{rem:3.1} In Proposition~\ref{pro:3.1}, $K_Z$ is not effective and all $Z_{i_0}/Z_{i_1}$ that are results of $K_Z$-MMPs with scaling by nef divisors $\Theta$ with $K_Z+\Theta\in V$ are Mori fibre spaces.
\end{rem}
\begin{rem}\label{rem:3.2}
By construction, the movable part of any $D\in\sum  \R_+(K_Z+\Delta_i)$ is basepoint free, hence, in the notation of Theorem~\ref{thm:2.1}, $f$ is an isomorphism and all $\vphi_i$ are morphisms.  
\end{rem}
\begin{rem}\label{rem:3.7}
If $\mcal A_{i_1}\cap \B(Z)= \emptyset$ and $\mcal A_{i_1}\subseteq \overline{\mcal A_{i_0}}$ with $\dim \mcal A_{i_0}= \rho(Z)$, then $\vphi_{i_0, i_1}\colon Z_{i_0}\to Z_{i_1}$ has $1\leq \rho(Z_{i_0}/Z_{i_1})\leq \dim \mcal A_{i_0}- \dim \mcal A_{i_1}$.
\end{rem}
\subsection{Polyhedral complexes}
\label{subsec:3.2}
In this section, I define a polyhedral complex supported on $\partial^+ \mcal C_\mathfrak R$, where  $\mcal C_\mathfrak R$ is as in Proposition~\ref{pro:3.1}.
\begin{nt}\label{nt}
Let $\{X_1/S_1, \dots, X_r/S_r; \Phi_{j,j'}\}$ be a collection of Mori fibre spaces and birational maps between them and $\mathfrak R= R(Z; K_Z+\Delta_1, \dots, K_Z+\Delta_n)$ the ring constructed in Proposition~\ref{pro:3.1}. Denote by $V= \sum \R(K_Z+D_i)$. Fix the coarsest decomposition 
\begin{equation} \label{eq:3.2} \mcal C_\mathfrak R= \coprod_{i\in I} \mcal A_i
\end{equation}
as in Theorem~\ref{thm:2.1}. As above, $\vphi_i\colon Z\lto X_i$ is the ample model of all $D\in \mcal A_i$. 
Let $\mcal P_\mathfrak R$ be a base of $\mcal C_\mathfrak R$ and $\mcal Q_i= \mcal A_i\cap \mcal P_{\mathfrak R}$ the induced decomposition as in Notation~\ref{nt:2.1}. 

Enlarging the collection if necessary, assume that $X_j/S_j$ exhaust the possible results of $(K_Z+\Delta)$-MMPs with scaling by ample divisors $A\in V$ for non-effective $K_Z+\Delta \in V$.  
Let $\mcal B^+(I)=\{i\in I| \mcal A_i\cap \B(Z)= \emptyset\}$, where $I$ is the index set in \eqref{eq:3.2}, so that
\begin{equation}\label{eq:3.3}
\partial^+ \mcal C_\mathfrak R= \coprod_{i\in \mcal B^+(I)} \mcal A_i.
\end{equation}
By construction, $\partial^+\mcal C_\mathfrak R$ is purely $(\rho(Z)-1)$-dimensional and is the cone over $\partial^+\mcal P_\mathfrak R= \coprod_{i\in \mcal B^+(I)} \mcal Q_i$, where $\mcal Q_i$ are defined in Notation~\ref{nt:2.1}. 
\end{nt}
\begin{nt}\label{nt:3.2}
Let $Z$ and $\mathfrak R$ be as in \ref{nt}, and let $\mcal C_\mathfrak R$ be the polyhedral complex  of Definition-Lemma~\ref{dfnlm:2.1}. 
Let $\mcal B^+(\mcal C_R)$ be the subcomplex of $\mcal C_{\mathfrak R}$ with objects $\overline{\mcal A_i}$ for $i\in \mcal B^+(I)$; $\mcal B^+(\mcal C_{\mathfrak R})$ is a pure polyhedral complex of dimension $\rho(Z)-1$. 
Similarly, the polyhedral complex $\mcal B^+(\mcal Q_\mathfrak R)$ with objects $\overline{\mcal Q_i}$ for $i\in \mcal B^+(I)$ is pure of dimension $\rho(Z)-2$. 
Denote by $\mcal N_\mathfrak R$ the simplicial complex $\Ner \mcal B^+(\mcal Q_\mathfrak R)$.   
\end{nt}
\begin{rem}There is a natural identification between $\mcal N_\mathfrak R$ and the $(\rho(Z)-2)$-skeleton of $\Ner \mcal B^+(\mcal C_\mathfrak R)$. 
Any $\rho(Z)$ distinct vertices of $\Ner \mcal B^+(\mcal C_\mathfrak R)$ span a $(\rho(Z)-1)$-simplex dual to the cone $\{0\}$. \end{rem}

 \begin{rem}\label{rem:3.6} By definition, the geometric realisations of the complexes defined above are $|\mcal C_\mathfrak R|= \mcal C_\mathfrak R$, $|\mcal B^+(\mcal C_\mathfrak R)|= \partial^+\mcal C_\mathfrak R$ and $|\mcal B^+(\mcal Q_\mathfrak R)|= \partial^+ \mcal P_\mathfrak R$. 
 \end{rem}
In what follows, I always assume that $\rho(Z)\geq 3$, as the Sarkisov Program reduces to the $2$-ray game otherwise. In particular, the vertices and edges of $\mcal N_\mathfrak R$ are precisely the vertices and edges of $\Ner \mcal B^+(\mcal C_\mathfrak R)$. I use both constructions of $\mcal N_\mathfrak R$.  
The next proposition states some easy properties of  $\mcal C_\mathfrak R, \mcal B^+(\mcal C_\mathfrak R)$, and $\mcal N_\mathfrak R$.
\begin{pro}\label{pro:3.2}
If $\overline{\mcal A_i},\overline{\mcal A_j}$ are distinct facets of $\mcal C_\mathfrak R$, then $\vphi_i\not \simeq \vphi_j$.
If $\overline{\mcal A}$ is a facet of $\mcal B^+ (\mcal C_\mathfrak R)$, there is a unique facet $\overline{\mcal A'}$ of $\mcal C_\mathfrak R$ with $\mcal A\subseteq \overline{\mcal A'}$.

There is a $1$-to-$1$ correspondence between the vertices of $\mcal N_\mathfrak R$ and the Mori fibre spaces produced by MMPs with scaling by nef divisors $H\in V$. 
\end{pro}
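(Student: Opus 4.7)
My plan is to prove the three claims in order, using the fan-like decomposition of $\mcal C_\mathfrak R$ together with convexity of $\Effb(Z)$.

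For the first claim, every facet $\overline{\mcal A_i}$ is $\rho(Z)$-dimensional, and since $\mcal C_\mathfrak R$ contains an ample divisor and $\B(Z)$ is Euclidean-open in $V$, every $\rho(Z)$-dimensional cell meets $\B(Z)$, so contains big divisors. By Remark~\ref{rem:2.5}(iii), the decomposition is read off the regions of linearity of $D\mapsto \Fix|f^*dD|$ on the big locus after pulling back to a fixed resolution. If $\vphi_i\simeq\vphi_j$ as birational models from $Z$, then by Lemma~\ref{lem:2.1} together with Theorem~\ref{thm:2.1}(4) big divisors in $\mcal A_i$ and $\mcal A_j$ would share stable base loci and $\Fix$ data, placing the two cells in a common region of linearity; the coarsest-ness of the decomposition of Theorem~\ref{thm:2.1} then forces $\overline{\mcal A_i}=\overline{\mcal A_j}$, contradicting distinctness.

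The second claim is the geometric core, and the part I expect to be the main obstacle. Let $\overline{\mcal A}$ be a facet of $\mcal B^+(\mcal C_\mathfrak R)$, so $\dim\mcal A=\rho(Z)-1$ by Proposition~\ref{pro:3.1}(4), and $\mcal A$ spans an affine hyperplane $H$ in $V=N^1(Z)_\R$. I would show that $H$ is a supporting hyperplane of $\Effb(Z)$: for $p$ in the relative interior of $\mcal A$, we have $p\in\partial\Effb(Z)$, and any supporting hyperplane of $\Effb(Z)$ at $p$ must contain the flat codimension-one piece $\mcal A$ through $p$, hence must coincide with $H$. Convexity then forces $\Effb(Z)$, and \emph{a fortiori} $\mcal C_\mathfrak R$, to lie in one closed half-space bounded by $H$, so $\mcal A\subset\partial\mcal C_\mathfrak R$. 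A codimension-one wall of the fan structure lying on the boundary of $\mcal C_\mathfrak R$ is contained in exactly one $\rho(Z)$-dimensional cell, giving the unique facet $\overline{\mcal A'}$.

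The third claim follows by combining (1) and (2). By the identification in Notation~\ref{nt:3.2}, a vertex of $\mcal N_\mathfrak R$ is dual to a facet $\overline{\mcal A}$ of $\mcal B^+(\mcal C_\mathfrak R)$, which by (2) determines a unique top-dimensional cell $\overline{\mcal A'}$; Lemma~\ref{lem:3.1} and Remark~\ref{rem:3.1} then identify the induced contraction $Z_{\mcal A'}\to Z_{\mcal A}$ with a Mori fibre space obtained by a $K_Z$-MMP with scaling by a nef divisor in $V$, giving a well-defined map from vertices of $\mcal N_\mathfrak R$ to MFS. Injectivity uses (1): distinct facets $\overline{\mcal A}$ either sit in distinct $\overline{\mcal A'}$ and so give non-isomorphic total spaces, or sit in the same $\overline{\mcal A'}$ as distinct codimension-one faces, whence they contract distinct extremal rays of $Z_{\mcal A'}$. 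Surjectivity is Proposition~\ref{pro:3.1}(5): the enlargement assumed in Notation~\ref{nt} ensures every such MFS appears as some $X_j/S_j$ and hence corresponds to some $\mcal A_{j_1}\subset\partial^+\mcal C_\mathfrak R$.
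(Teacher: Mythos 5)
Your overall route matches the paper for the first and third assertions, but differs for the second, and two of your citations need repair. For the first assertion the paper argues exactly as you intend: if $\vphi_i\simeq\vphi_j$ then $D\mapsto\Fix|f^*dD|$ is linear on $\overline{\mcal A_i\cup\mcal A_j}$, contradicting that \eqref{eq:3.2} is the coarsest decomposition; however your appeal to Lemma~\ref{lem:2.1} is misplaced, since that lemma compares \emph{numerically equivalent} big divisors, whereas big divisors in $\mcal A_i$ and $\mcal A_j$ are not numerically equivalent. The point you actually need (and which the paper asserts) is that a single semiample model $\vphi$ for all of $\mcal A_i\cup\mcal A_j$ forces linearity of the fixed part there, because $\Fix$ is computed from $p^*D-q^*\vphi_*D$, which is linear in $D$ for fixed $\vphi$.

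For the second assertion you take a genuinely different path. The paper's argument is a one-liner: if $\mcal A$ were contained in two distinct facets $\overline{\mcal A_i},\overline{\mcal A_j}$ of $\mcal C_\mathfrak R$, any $D\in\mcal A$ could be written $D=D_i+D_j$ with $D_i\in\mcal A_i$, $D_j\in\mcal A_j$ both big, so $D$ would be big, contradicting $\mcal A\cap\B(Z)=\emptyset$. You instead show that the hyperplane spanned by $\mcal A$ supports $\Effb(Z)$ (the relative-interior-point argument is fine), hence $\mcal A$ lies on the topological boundary of the support of $\mcal C_\mathfrak R$, and then invoke the standard fact that a codimension-one wall of a polyhedral subdivision lying on the boundary of a convex support belongs to exactly one maximal cell. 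This is correct, and it makes explicit the useful picture that facets of $\mcal B^+(\mcal C_\mathfrak R)$ are flat pieces of $\partial\Effb(Z)$; the cost is an extra unproved (though standard) convex-geometry lemma, and you should run the span/supporting-hyperplane argument in $N^1(Z)_\R$ via $\pi$ (or inside the linear span of $\mcal C_\mathfrak R$), since $V$ may have dimension larger than $\rho(Z)$. The paper's version is shorter and entirely self-contained, resting only on ``big $+$ big $=$ big''.

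For the third assertion your construction of the map (wall $\mapsto$ unique facet $\mapsto$ $Z_{i_0}/Z_{i_1}$ via Lemma~\ref{lem:3.1} and Remark~\ref{rem:3.1}) and your surjectivity argument (Proposition~\ref{pro:3.1}(5) together with the exhaustion assumed in Notation~\ref{nt}) coincide with the paper's proof. Your injectivity step, however, overstates what the first assertion gives: $\vphi_i\not\simeq\vphi_j$ means there is no isomorphism commuting with the maps from $Z$, not that the underlying varieties are non-isomorphic (compare Example~\ref{exa:4.1}, where distinct facets yield $\F_{1a}\cong\F_{1b}$, and distinct walls yield the two rulings of $\PS^1\times\PS^1$). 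The correspondence is with Mori fibre spaces \emph{as outputs of MMPs from $Z$}, i.e.\ with the pairs of ample models $(\vphi_{i_0},\vphi_{i_1})$, for which distinctness of vertices is built into the construction; read as abstract isomorphism classes the statement would be false, so this part of your argument should be rephrased rather than strengthened.
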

\begin{proof} 
Let $\overline{\mcal A_i}$ and $\overline{\mcal A_j}$ be distinct facets of $\mcal C_\mathfrak R$. If $\vphi_i$ is the ample model for all $D\in \mcal A_i\cup \mcal A_j$, then $D\mapsto \Fix|dD|$ is linear on $\overline{\mcal A_i\cup \mcal A_j}$. This is impossible because the decomposition \eqref{eq:3.2} is assumed to be coarsest. 

The second assertion follows immediately from the definition of the decomposition \eqref{eq:3.2}. If $\mcal A_i, \mcal A_j$ are facets of $\mcal C_{\mathfrak R}$ with $\overline{\mcal A_{i,j}}=\overline{\mcal A_i}\cap \overline{\mcal A_j}$ and $\dim \mcal A_{i,j}= \rho(Z)-1$, any $D\in \mcal A_{i,j}$ can be written $D= D_i+D_j$ for $D_i\in \mcal A_i$ and $D_j\in \mcal A_j$. But then, $D$ is big because $D_i$ and $D_j$ both are, and $\mcal A_{i,j}\not \in \mcal B^+(\mcal C_\mathfrak R)$. In particular, a facet of $\mcal B^+(\mcal C_\mathfrak R)$ is not the intersection of two facets of $\mcal C_\mathfrak R$.

Last, let $v_i= (\overline{\mcal A_{i_1}})^*$ be a vertex of $\mcal N_\mathfrak R$, and $\overline{\mcal A_{i_0}}$ the facet of $\mcal C_\mathfrak R$ that contains $\mcal A_{i_1}$.
By Lemma~\ref{lem:3.1} and Remark~\ref{rem:3.1}, $Z_{i_0}/Z_{i_1}$ is a Mori fibre space that is the result of a $K_Z$-MMP by scaling in $V$, and all results of $K_Z$-MMPs with scaling in $V$ are of this form. This finishes the proof.
\end{proof}
\begin{rem}\label{rem:3.3}
If $\overline{\mcal A_i}, \overline{\mcal A_j}$ are distinct facets of $\mcal{B}^+(\mcal C_\mathfrak R)$, the associated ample models $\vphi_i$ and $\vphi_j$ need not be distinct. When $\vphi_i\simeq \vphi_j$, $\vphi_i$ has distinct factorisations through semiample models that are ample models for the (not necessarily distinct) facets $\mcal A_i', \mcal A_j'$, compare with Corollary~\ref{cor:3.1}. 
\end{rem} 
 
\begin{exa}\label{exa:4.1} Let $S$ be the blow up of $\PS^2$ in $2$ points $P_1\neq P_2$. Let $E_1,E_2$ be the $(-1)$-curves lying over $P_1, P_2$ and $L$ the proper transform of the line through $P_1$ and $P_2$.
Figure~\ref{fig:1.1} represents a base $\mcal P_{\mathfrak R}$ and the decomposition induced by \eqref{eq:3.2}, where $\mathfrak R= R(S, K_S+A+E_1, K_S+A+E_2, K_S+A+L)$ for an ample divisor $A\sim_{\Q}{-}K_S$. Then, $\mcal C_\mathfrak R=\Effb(S)= \R_+[L]+\R_+[E_1]+\R_+[E_2]$. 
If $\mcal A_i$ denotes the interior of the cones $\mcal C_i$ in Figure~\ref{fig:1.1}, the ample models associated to $\mcal A_0, \dots , \mcal A_4$ are $\vphi_0\colon S\stackrel{\simeq}\lto S$, $\vphi_1\colon S\to \PS^1_a\times \PS^1_b$, $\vphi_2\colon S\to  \F_{1a}$, $\vphi_3\colon S\lto \F_{1b}$ and $\vphi_4\colon S\lto \PS^2$. 
The complex $\mcal N_{\mathfrak R}$ is the graph with $5$ vertices and $5$ edges:
\begin{center}
\begin{pspicture}[psscale=.5](-0.5,-0.5)(2.5,2.5)
\psdots(1.25,0)(0,1)(2.5,1)(0.5,2)(2, 2)
\pspolygon[linewidth=0.4pt](1.25,0)(0,1)(0.5,2)(2, 2)(2.5,1)
\rput[t](1.25, -0.1){$[E_1,E_2]$}\rput[r](-0.2,1){$[L+E_1,E_1]$}\rput[r](0.3,2){$[L+E_1,L]$}\rput[l](2.2,2){$[L, L+E_2]$}\rput[l](2.7, 1){$[E_2, L+E_2]$}
\end{pspicture}
\end{center}
\end{exa}

\begin{dfn}\label{dfn:3.2} 
A birational map $\Phi\colon Z_i\dashto Z_j$ between the ample models associated to $\mcal A_i, \mcal A_j$ in \eqref{eq:3.2} is \emph{dominated by $Z$} if $Z$ is a resolution of $\Phi$, that is if $\Phi\simeq \vphi_{j}\circ \vphi_i^{-1}$. Equivalently, $\Phi$ is dominated by $Z$ when $\Phi\circ \vphi_i$ is the ample model of some nonzero $\Q$-divisor $D\in \mcal A_j$.  

An elementary Sarkisov link $L_{i,j}\colon X_i/S_i\dashto X_{j}/S_{j}$ as in \eqref{esl} is \emph{dominated by $Z$ in $V$} if $X_i/S_i$ and $X_j/S_j$ are the results of $(K_Z+\Delta)$-MMPs with scaling in $V$ for $(Z, \Delta)$ klt and $K_Z+\Delta\in V$. \end{dfn}
\begin{rem}
The map $\Phi\colon Z_i \dashto Z_j$ is dominated by $Z$ when $\Phi$ is induced by the geography of ample models associated to the decomposition \eqref{eq:3.2} of $\mcal C_\mathfrak R$. When $\Phi= L_{i,j}$ is a Sarkisov link, this is equivalent to requiring that $X_i\dashto X_j$ is dominated by $Z$ and that the induced map $Z\dashto T_{i,j}$ is the ample model for some nonzero $\Q$-divisor $D\in \mcal C_\mathfrak R$.\end{rem}

\subsection{Residual rings and complexes}
\label{subsec:3.3}
In this section, I define subrings of $\mathfrak R$ associated to affine subspaces $H\subset V$, and show that, for suitable choices of $H$, the geography of ample model induced by \eqref{eq:3.2} on the support of these subrings corresponds to running suitable relative Minimal Model Programs.  

\begin{dfn}\label{dfn:3.3}
An affine subspace $H=D+W\subset \Div_{\R}(Z)$ of dimension $d\leq \rho(Z)-1$ is in \emph{general position} with respect to $\mcal C_\mathfrak R$ if $D$ is a nonzero $\Q$-divisor and if $H \cap \mcal C_\mathfrak R$ is not contained in any union or intersection of cones $\mathcal{A}_i$ in \eqref{eq:3.2}. 

Let $\mcal A$ be a cone of dimension $d$ in \eqref{eq:3.3}. An affine subspace $H$ of dimension $\rho(Z)-1-d$ is in general position with respect to $\mcal C_\mathfrak R$ and $\mcal A$ if it is in general position with respect to $\mcal C_\mathfrak R$ and if $H\cap \mcal A\neq \emptyset$. 
\end{dfn}

The following two lemmas are reformulations of results in \cite{HM09}.
\begin{lem} \label{lem:3.2}Let $D\in V$ be a nonzero adjoint $\Q$-divisor.  
For any $k\leq \rho(Z)-1$ the subset $U\subset Gr(k,V)$ of $k$-dimensional affine subspaces $D+W\subset V$ in general position with respect to $\mcal C_\mathfrak R$ is Zariski dense. 

\end{lem}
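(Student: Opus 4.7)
The plan is to express the condition of being in general position as the complement of a finite union of proper closed subsets of the Grassmannian $\Gr(k,V)$, so that density follows from irreducibility of $\Gr(k,V)$. With $D$ fixed, I parametrise the candidate affine subspaces $D+W\subseteq V$ by $W\in \Gr(k,V)$.

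First I would enumerate the obstructions. Because the decomposition $\mcal C_\mathfrak R=\coprod_{i\in I}\mcal A_i$ from \eqref{eq:3.2} is finite, there are only finitely many closed rational polyhedral subcones of the form $\bigcup_{j\in J}\overline{\mcal A_j}$ or $\bigcap_{j\in J}\overline{\mcal A_j}$ that are proper in $\mcal C_\mathfrak R$. Label this finite list $\Sigma_1,\dots,\Sigma_m$. By the definition of general position, $D+W$ fails to be in general position precisely when $(D+W)\cap\mcal C_\mathfrak R\subseteq\Sigma_\ell$ for some $\ell$.

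Next, for each $\ell$ I would show that
\[
B_\ell=\{W\in\Gr(k,V)\mid (D+W)\cap\mcal C_\mathfrak R\subseteq\Sigma_\ell\}
\]
is a proper closed subset of $\Gr(k,V)$. Closedness is a standard semicontinuity argument in the Grassmannian: the complement $\Gr(k,V)\smallsetminus B_\ell$ consists of those $W$ admitting some $v\in (D+W)\cap \mcal C_\mathfrak R \smallsetminus \Sigma_\ell$, and such a $v$ can be perturbed to a nearby point in $(D+W')\cap \mcal C_\mathfrak R\smallsetminus\Sigma_\ell$ for $W'$ close to $W$, after arranging $v$ to lie in $\inte(\mcal C_\mathfrak R)$, which is possible because $\Sigma_\ell$ has codimension at least one in $\mcal C_\mathfrak R$ and $\mcal C_\mathfrak R$ has dimension $\rho(Z)$ by Proposition~\ref{pro:3.1}(1). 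For properness: pick any rational point $p_\ell\in \mcal C_\mathfrak R\smallsetminus \Sigma_\ell$, which exists since $\Sigma_\ell$ is a proper subcone; then any $W$ containing the nonzero vector $p_\ell-D$ satisfies $p_\ell \in (D+W) \cap \mcal C_\mathfrak R$ while $p_\ell \notin \Sigma_\ell$, so $W\notin B_\ell$, and such $W$ exist because $k\geq 1$.

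Finally, the set of $W$ in general position is $\Gr(k,V)\smallsetminus \bigcup_{\ell=1}^m B_\ell$, a finite intersection of nonempty open subsets of the irreducible Grassmannian, hence open and Zariski-dense. The main obstacle is the openness argument in step two: one must ensure that perturbing $W$ preserves the property of containing a point of $\mcal C_\mathfrak R$ outside $\Sigma_\ell$, which is where the ability to select $v$ in the interior of the full-dimensional cone $\mcal C_\mathfrak R$ becomes essential. The remainder is routine bookkeeping over the finite family $\{\Sigma_\ell\}$.
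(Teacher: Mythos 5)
Your overall strategy is the same as the paper's, which simply asserts the lemma as an ``immediate consequence'' of the finiteness of the decomposition \eqref{eq:3.2}: you make this precise by writing the bad locus as a finite union of proper subsets $B_\ell$ of the Grassmannian indexed by the finitely many proper subcones $\Sigma_\ell$, and then invoke irreducibility. That is the right idea, and it is more than the paper spells out.

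However, the closedness step as written has a gap. You claim that given $v \in (D+W)\cap\mcal C_\mathfrak R\smallsetminus\Sigma_\ell$ one can ``arrange $v$ to lie in $\inte(\mcal C_\mathfrak R)$ because $\Sigma_\ell$ has codimension at least one.'' Neither half of this is reliable: a proper subcone $\Sigma_\ell=\bigcup_{j\in J}\overline{\mcal A_j}$ may perfectly well contain full-dimensional chambers, so it need not have positive codimension; and, more to the point, if $(D+W)\cap\inte(\mcal C_\mathfrak R)=\emptyset$ there is simply no interior witness $v$ to move to, so your perturbation argument does not apply and does not show $\Gr(k,V)\smallsetminus B_\ell$ is open for that $W$. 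The correct observation that rescues the argument is different: $\partial\mcal C_\mathfrak R$, being the union of all the non-full-dimensional $\overline{\mcal A_i}$, is itself one of your $\Sigma_\ell$'s; hence any $W$ with $(D+W)\cap\inte(\mcal C_\mathfrak R)=\emptyset$ already lies in $\bigcup_\ell B_\ell$. On the complement, $(D+W)$ meets $\inte(\mcal C_\mathfrak R)$, and then convexity of $(D+W)\cap\mcal C_\mathfrak R$ together with the fact that $\mcal C_\mathfrak R\smallsetminus\Sigma_\ell$ is open in $\mcal C_\mathfrak R$ lets you push the witness into the interior and perturb $W$. This shows that $U=\Gr(k,V)\smallsetminus\bigcup_\ell B_\ell$ is open, which is the property you actually need. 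One still has to exhibit some $W\in U$ (the individual sets $\Gr(k,V)\smallsetminus B_\ell$ being nonempty is not, on its own, enough to intersect them, unless you have already secured their openness): a standard transversality or measure-zero argument over the finitely many linear spans of the $\overline{\mcal A_i}$ and their translates gives this. With these repairs your proposal goes through and remains the same in spirit as the paper's one-line proof.
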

\begin{proof}This is an immediate consequence of the existence of the finite decomposition \eqref{eq:3.2} into rational polyhedral cones. \end{proof}
\begin{lem}\label{lem:3.3}
There is a $1$-to-$1$ correspondence between edges of $\mcal N_\mathfrak R$ and elementary Sarkisov links dominated by $Z$.
\end{lem}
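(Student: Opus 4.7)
The plan is to translate the combinatorics of $\mcal N_\mathfrak R$ into birational data via the 2-ray game on an appropriate 2-dimensional slice of $\mcal C_\mathfrak R$, reducing both directions to the picture of Example~\ref{exa0}.

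First I would handle the forward direction. By the remark preceding Proposition~\ref{pro:3.2}, an edge of $\mcal N_\mathfrak R$ is a $1$-simplex $[\overline{\mcal A_{i_1}}^*,\overline{\mcal A_{j_1}}^*]$ of $\Ner \mcal B^+(\mcal C_\mathfrak R)$, dual to a common face $\overline{\mcal A_k}\subseteq \overline{\mcal A_{i_1}}\cap \overline{\mcal A_{j_1}}$ of dimension $\rho(Z)-2$ with $\mcal A_k\subseteq \partial^+\mcal C_\mathfrak R$. Proposition~\ref{pro:3.2} identifies each $\overline{\mcal A_{i_1}}$ with a unique facet $\overline{\mcal A_{i_0}}$ of $\mcal C_\mathfrak R$, yielding the Mori fibre space $X_i/S_i=Z_{i_0}/Z_{i_1}$, and analogously on the $j$-side. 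Setting $R=Z_k$, Theorem~\ref{thm:2.1}(3) supplies morphisms $Z_{i_1}\to R\leftarrow Z_{j_1}$ compatible with the Mori fibrations.

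To produce the link, I would apply Lemma~\ref{lem:3.2} to pick a $1$-dimensional affine subspace $H$ in general position with respect to $\mcal C_\mathfrak R$ and $\mcal A_k$, meeting both $\mcal A_{i_0}$ and $\mcal A_{j_0}$. The $2$-dimensional cone $\Pi=\Cone_0(H\cap \mcal C_\mathfrak R)$ then inherits a finite decomposition from \eqref{eq:3.2} whose two $2$-dimensional cells are $\mcal A_{i_0}\cap \Pi$ and $\mcal A_{j_0}\cap \Pi$, whose two non-apex rays $\mcal A_{i_1}\cap \Pi$ and $\mcal A_{j_1}\cap \Pi$ lie in $\partial^+\mcal C_\mathfrak R$, and whose apex corresponds to $\mcal A_k$. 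This recovers the $2$-ray configuration of Example~\ref{exa0}: by Proposition~\ref{pro:3.1}(5) and Lemma~\ref{lem:3.1}, both $X_i/S_i$ and $X_j/S_j$ arise as endpoints of a $(K_Z+\Delta)$-MMP with scaling directed by $\Pi$, and the resulting small birational map between $\Q$-factorial models fits into the diagram~\eqref{esl} over $R$.

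For the backward direction, let $L_{i,j}$ be an elementary Sarkisov link dominated by $Z$ in $V$ with base $R$. By the remark following Definition~\ref{dfn:3.2}, $R\simeq Z_k$ is the ample model of some nonzero $\Q$-divisor $D\in \mcal C_\mathfrak R$, so $D\in \mcal A_k$ for a unique cell of~\eqref{eq:3.2}. The common morphisms $Z_{i_1}\to R\leftarrow Z_{j_1}$ together with Theorem~\ref{thm:2.1}(3) force $\mcal A_k\subseteq \overline{\mcal A_{i_1}}\cap \overline{\mcal A_{j_1}}$. Finally, the constraint $\rho(\widetilde X/R)=\rho(\widetilde Y/R)=2$ in Definition~\ref{dfn2.6}, combined with Remark~\ref{rem:3.7} applied on both sides, pins down $\dim \mcal A_k=\rho(Z)-2$, giving the desired edge.

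The hard part will be identifying the $\Q$-factorial models $\widetilde X,\widetilde Y$ in the forward direction so that~\eqref{esl} holds as stated in Definition~\ref{dfn2.6}, in particular distinguishing the four link types I--IV by whether $Z_{i_1}=R$ and/or $Z_{j_1}=R$; when $Z_{i_1}=R$, one must realise $\widetilde X\neq X_i$ as a divisorial extraction coming from the $2$-ray game on $\Pi$, which relies on Proposition~\ref{pro:3.1}(2) and Remark~\ref{rem:3.2} to guarantee that the relevant extraction is visible in the restricted geography.
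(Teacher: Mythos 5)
Your backward direction follows the paper's argument, but the forward direction has a genuine gap: the slice you propose does not exist in general, and the ``$2$-ray game on $\Pi$'' built from it cannot produce the top row of \eqref{esl}. The point is that $\mcal A_k$ is typically contained in the closures of \emph{several} facets of $\mcal C_\mathfrak R$, not only $\overline{\mcal A_{i_0}}$ and $\overline{\mcal A_{j_0}}$. Since $D\in \mcal A_k$ is not big, $D$ lies on $\partial \mcal C_\mathfrak R$, so in the $2$-plane transverse to $\mcal A_k$ at $D$ the facets containing $\mcal A_k$ in their closure form a fan of sectors spanning a convex angle at most $\pi$, with $\mcal A_{i_0}$ and $\mcal A_{j_0}$ the two extreme sectors and, in general, intermediate chambers between them. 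A line through $D$ leaves $D$ in two antipodal transverse directions, so the two cells of $\Pi$ adjacent to the ray $\R_+D$ can be $\mcal A_{i_0}\cap \Pi$ and $\mcal A_{j_0}\cap \Pi$ only if no intermediate chamber is present (and the boundary is transversally flat at $D$); if instead $H$ picks up $\mcal A_{i_0}$ and $\mcal A_{j_0}$ away from $D$, then $\Pi$ acquires further maximal cells and the claimed two-cell decomposition again fails. Already in Example~\ref{exa:4.1}, at the point $L+E_1$ of Figure~\ref{fig:1.1} the chambers $\mcal C_2$ and $\mcal C_1$ are separated by $\mcal C_0$, and no line through $L+E_1$ meets both. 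Worse, the intermediate chambers are exactly what you discard but need: their ample models supply $\widetilde X$ and $\widetilde Y$ and the isomorphism in codimension one (in that example $\mcal C_0$ gives $\widetilde X\simeq \widetilde Y\simeq S$ for the type II link $\F_{1a}\dashto \PS^1\times\PS^1$ over $\PS^1$), so the step you defer as ``the hard part'' is not reachable from your $\Pi$.

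The paper's proof circumvents this precisely at that point: it takes a $2$-dimensional affine slice $H$ through $D$, and then a line $L$ \emph{not} through $D$ but a small translate of a supporting hyperplane of $\mcal P_\mathfrak R\cap H$ at $D$, so that $L$ crosses \emph{all} facets $\mcal A_1,\dots,\mcal A_n$ whose closures contain $\mcal A_k$. Remark~\ref{rem:3.7} gives $\rho(Z_l/T_{i,j})\leq 2$ for each of the corresponding models, one chooses $Z_{l_0}$ with $\rho(Z_{l_0})$ maximal, and Proposition~\ref{pro:2.1}(4) decomposes $Z_{l_0}\dashto X_i$ and $Z_{l_0}\dashto X_j$ into elementary contractions; this produces $\widetilde{X_i},\widetilde{X_j}$ isomorphic to $Z_{l_0}$ in codimension one, together with the divisorial contractions or isomorphisms onto $X_i, X_j$, and hence the diagram \eqref{esl} over $T_{i,j}$ with the correct relative Picard ranks. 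To repair your argument you should replace the line through $D$ by such a nearby transversal sweeping out all chambers around $\mcal A_k$ (equivalently, work with the residual complex of $\mcal A_k$); a one-dimensional affine slice through $D$ cannot record the intermediate models and so cannot yield the link.
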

\begin{proof} 
I use the notation of the proof of Proposition~\ref{pro:3.1}. 
First, I show that an edge of $\mcal N_\mathfrak R$ determines an elementary link dominated by $Z$.
Let $v_i, v_j$ be vertices of $\mcal N_\mathfrak R$ that span a $1$-simplex. Let $i_1,  j_1\in \mcal B^+(I)$ be such that $\overline{\mcal A_{i_1}}= v_i^*$ and $\overline{\mcal A_{j_1}}= v_j^*$, and $i_0, j_0\in I$ the indices of the facets of $\mcal C_\mathfrak R$ with $\mcal A_{i_1}\subseteq \overline{\mcal A_{i_0}}$ and $\mcal A_{j_1}\subseteq \overline{\mcal A_{j_0}}$.  Denote by $X_i/S_i$ (resp.~$X_j/S_j$) the Mori fibre spaces $Z_{i_0}/Z_{i_1}$ (resp.~$Z_{j_0}/Z_{j_1}$).

Let $\overline{\mcal A}=\overline{\mcal A_k}$ be the object of $\mcal B^+(\mcal C_\mathfrak R)$ dual to the $1$-simplex $\{v_i, v_j\}$ of $\mcal N_\mathfrak R$. Then, $\overline{\mcal A}= \overline{\mcal A_{i_1}}\cap \overline{\mcal A_{j_1}}$ and $\mcal A$ has dimension $\rho(Z)-2$. Let $\vphi\colon Z\lto T_{i,j}$ be the ample model associated to the face $\mcal A$. 
Then, by Theorem~\ref{thm:2.1}(3), there are morphisms $\vphi_{i_0,k}$ and $\vphi_{j_0,k}$ that decompose as 
\[ X_i \stackrel{\vphi_{i_0, i_1}}\lto S_{i} \stackrel{\vphi_{i_1,k}}\lto T_{i,j} \mbox{ and } X_i \stackrel{\vphi_{j_0, j_1}}\lto S_{j} \stackrel{\vphi_{j_1,k}}\lto T_{i,j} \]
such that $\vphi\simeq \vphi_{i_0,k}\circ \vphi_{i_0}\simeq \vphi_{j_0,k}\circ \vphi_{j_0}$.
 
Fix $D\in \mcal A\cap \Div_\Q(Z)$, and let $H=D+W$ be a $2$-dimensional affine subspace in general position with respect to $\mcal C_\mathfrak R$ and $\mcal A$. Then,  $\mcal C_\mathfrak R\cap H= \mcal P_\mathfrak R\cap H$ for a suitable base $\mcal P_\mathfrak R$, and $\mcal P_\mathfrak R\cap H$ is as in Figure 2, where $\mcal Q_i= \mcal A_i\cap H$. 
\begin{figure}[h]\begin{center}
\label{fig:2}
\begin{pspicture}(-4,-0.3)(4, 1.2)
\psdot (0,0)
\rput[t](0,-0.1){$D$}
\rput[t](-1.5, -0.1){$\mcal Q_i$}\rput[t](1.5, .1){$\mcal Q_j$}\rput(0.1,.4){$\cdots$}
\rput[b](-2.7, 0.1){$\mcal Q_{i_0}$}\rput[b](2.7, 0.45){$\mcal Q_{j_0}$}
\psline[linewidth=0.4pt](0,0)(-4,0)\psline[linewidth=0.4pt](0,0)(-3.2,1) \psline[linewidth=0.4pt](0,0)(-2.5, 1)\psline[linewidth=0.4pt](0,0)(-1.5,1)\psline[linewidth=0.4pt](0,0)(3, 1)\psline[linewidth=0.4pt](0,0)(4,0.6)
\end{pspicture}
\vspace{-10pt}
\caption{}
\end{center}
\vspace{-10pt}
\end{figure}
Since $D$ is an exposed point of $\mcal P_\mathfrak R\cap H$ and since the decomposition \eqref{eq:3.2} is finite, there is a line $L$ such that $(\mcal P_\mathfrak R\cap H)\cap L$ contains no other vertices of $\mcal P_{\mathfrak R}\cap H$, and the only $2$-dimensional polytopes $\overline{\mcal Q_i}$ that $L$ intersects contain $D$ (e.g.~a small translation of the supporting hyperplane of $\mcal P_\mathfrak R\cap H$ at $D$).
Let $\mcal A_1, \cdots, \mcal A_n$ be the facets of $\mcal C_\mathfrak R$ that have $1$-dimensional intersection with $L$, where $\mcal A_i\cap H= \mcal Q_i$. Then, $\mcal A \subset \overline{\mcal A_l}$ for $l=1, \dots, n$, there is a morphism $Z_l\to T_{i,j}$ for all $l$, with $\rho(Z_l/T_{i,j})\leq 2$ by Remark~\ref{rem:3.7} because $\dim \mcal A= \dim \mcal A_l-2$. 

Let $l_0$ be such that $\rho(Z_{l_0})$ is maximal. Then, again by Proposition~\ref{pro:2.1}, both $Z_{l_0}\dashto X_i$ and $Z_{l_0}\dashto X_{j}$ are compositions of finitely many elementary contractions and by definition of $l_0$, these are birational contractions. Since $\rho(X_i)$ and $\rho(X_j)$ are both $\geq \rho(T_{i,j})+1$, there are indices $1\leq i',j'\leq n$ and elementary contractions $Z_{i'}=\widetilde{X_i}\to X_{i}$ and $Z_{j'}=\widetilde{X_j}\to X_{j}$ with $Z_{l_0},\widetilde{X_i},\widetilde{X_j}$ isomorphic in codimension $1$, that fit in the diagram: 
\begin{equation*}
\xymatrixrowsep{0.15in}
\xymatrixcolsep{0.2in}
\xymatrix{\widetilde{X_i} \ar@{-->}[rr] \ar[d] & &\widetilde{X_{j}}\ar[d]\\
X_{i}\ar[d] & &X_{j} \ar[d]\\
S_{i}\ar[dr] & & S_{j}\ar[dl] \\
 & T_{i,j} & }\end{equation*} 
This shows that every edge of $\mcal N_{\mathfrak R}$ defines a Sarkisov link dominated by $Z$. 

Conversely, let $L_{i,j}\colon X_i/S_i\dashto X_j/S_j$ be an elementary Sarkisov link dominated by $Z$. Let $v_i$ be the vertex of $\mcal N_\mathfrak R$ associated to $X_i/S_i$ and $v_j$ that associated to $X_j/S_j$, and as above, denote by $\overline{\mcal A_{i_1}}, \overline{\mcal A_{j_1}}$ the dual facets of $\mcal B^+(\mcal C_\mathfrak R)$. By Definition~\ref{dfn:3.2}, $\overline{\mcal A_{i_1}}\cap \overline{\mcal A_{j_1}}\neq \emptyset$. But then, by definition of the decomposition \eqref{eq:3.2}, there is an index $k\in \mcal B^+(I)$ with $\overline{\mcal A_{k}}= \overline{\mcal A_{i_1}}\cap \overline{\mcal A_{j_1}}$, and $\dim \mcal A_k= \rho(Z)-2$. The vertices $\{v_i,v_j\}$ span a $1$-simplex dual to $\overline{\mcal A_k}$.
\end{proof}

\begin{cor}\label{cor:3.1}
Let $[v_i, v_j]$ be a $1$-simplex of $\mcal N_\mathfrak R$, $\overline{\mcal A}\in \Ob \mcal B^+(\mcal C_\mathfrak R)$ its dual face and let $\vphi\colon Z\lto T_{i,j}$ be the ample model associated to $\mcal A$.
The elementary Sarkisov link associated to $[v_i,v_j]$ is of:
\begin{enumerate}
\item[] type I (resp. type III) if $\vphi_i\simeq \vphi\not \simeq \vphi_j$(resp. $\vphi_i\not \simeq \vphi \simeq \vphi_j$), 
\item[] type II if $\vphi_i\simeq \vphi_j$, and type IV if $\vphi_i\not \simeq  \vphi \not \simeq \vphi_j$.
\end{enumerate}
\end{cor}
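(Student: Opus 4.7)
The strategy is to match the conditions on the isomorphism classes of $\vphi_i, \vphi_j, \vphi$ with the isomorphism pattern of the four morphisms $f, g, p, q$ in the Sarkisov diagram~\eqref{esl}, then invoke the type classification of Definition~\ref{dfn2.6}. The construction in the proof of Lemma~\ref{lem:3.3} already identifies the Sarkisov link associated to the edge $[v_i, v_j]$: its bottom row is determined by $R = T_{i,j}$, $S = S_i$, $T = S_j$, $X = X_i$ and $Y = X_j$, and by Theorem~\ref{thm:2.1}(3) there are morphisms $p \colon S_i \to T_{i,j}$ and $q \colon S_j \to T_{i,j}$ satisfying $\vphi \simeq p \circ \vphi_i \simeq q \circ \vphi_j$.

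The heart of the proof consists of three equivalences: (a) $\vphi_i \simeq \vphi$ iff $p$ is an isomorphism; (b) $\vphi_j \simeq \vphi$ iff $q$ is an isomorphism; and (c) $\vphi_i \simeq \vphi_j$ iff both $p$ and $q$ are isomorphisms. Statements (a) and (b) follow immediately from the factorisations above together with the uniqueness of ample models (Lemma~\ref{lem:2.1}). For (c), the ``if'' direction is a consequence of (a) and (b). For the converse, an iso $\sigma\colon S_i \to S_j$ compatible with the maps from $Z$ yields $p = q\circ \sigma$, so $p$ and $q$ are simultaneously isomorphisms or not. If neither is an isomorphism, then by Definition~\ref{dfn2.6} both $f$ and $g$ must be isomorphisms, placing us in Type IV; but in this case $X_i \dashto X_j$ is an isomorphism in codimension $1$ with $\rho(X_i/T_{i,j})=2$, so the two distinct extremal rays of $X_i/T_{i,j}$ produce two distinct Mori fibrations onto $S_i$ and $S_j$, contradicting a canonical iso $S_i \simeq S_j$.

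Granted (a)--(c), the four cases of Definition~\ref{dfn2.6} translate directly. In Type I ($p$ and $g$ iso), (a) gives $\vphi_i \simeq \vphi$, while the condition that exactly one of $(g, q)$ is an isomorphism forces $q$ to be non-iso, hence $\vphi_j \not\simeq \vphi$ by (b). Type II ($p, q$ both iso) yields $\vphi_i \simeq \vphi \simeq \vphi_j$, in particular $\vphi_i \simeq \vphi_j$. Type III ($f, q$ iso) is symmetric to Type I after interchanging $i$ and $j$. Type IV ($f, g$ iso) has neither $p$ nor $q$ an iso, so $\vphi_i \not\simeq \vphi \not\simeq \vphi_j$. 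The main obstacle is the non-trivial direction of (c): this is what guarantees the four conditions in the statement are mutually exclusive, and it is precisely at this point that the $2$-ray game structure of the link must be used.
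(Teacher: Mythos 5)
Your translation of the statement into conditions on the bottom maps of the link is exactly how the paper reads it: the paper's own proof is the one-line remark that this is a reformulation of Definition~\ref{dfn2.6}, i.e.\ your equivalences (a) and (b) (``$p$ is an isomorphism iff $\vphi_i\simeq\vphi$'', and likewise for $q$) together with the observation that, since one of $f,p$ and one of $g,q$ is always an isomorphism, the type is determined by $p$ and $q$ alone. Up to that point your proposal and the paper agree.

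The gap is in the one step you yourself flag as the main obstacle, the nontrivial direction of (c). Your argument assumes that in the type IV configuration the two Mori fibrations onto $S_i$ and $S_j$ are the contractions of the two extremal rays of $\NEb(X_i/T_{i,j})$; this is only true when $X_i\dashto X_j$ is an isomorphism, whereas in a general type IV link the second ray of $X_i$ over $T_{i,j}$ is a small (flipping/flopping) ray and the second Mori fibration lives on $X_j$, not on $X_i$. Moreover, even in the case $X_i\simeq X_j$, the deduction ``two distinct extremal rays produce two distinct fibrations, contradicting $S_i\simeq S_j$'' is circular: an isomorphism $\sigma$ with $\sigma\circ\vphi_i=\vphi_j$ forces the two fibrations to contract the same curves, i.e.\ the two rays to coincide, and the distinctness of the rays is precisely what is in question under the hypothesis $\vphi_i\simeq\vphi_j$. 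What is actually needed is to rule out that two \emph{distinct} vertices of $\mcal N_\mathfrak R$ give the same fibration $Z\to S$: if $\mcal A_{i_1},\mcal A_{j_1}$ lie in the same facet of $\mcal C_\mathfrak R$ this follows from a convexity argument (the map $D\mapsto\Mob(dD)$ is linear on the closed chamber, so a segment joining the two faces would be sent into the proper face $\pi^*\Nef(S)$ of $\Nef(X_i)$ while passing through big divisors whose models are $X_i$); if they lie in distinct facets, one must show that the small map $X_i\dashto X_j$, being compatible with the two fibrations over $S_i\simeq S_j$ and with $-K$ relatively ample on both sides, is an isomorphism (both are the relative anticanonical $\Proj$ over the common base), contradicting Proposition~\ref{pro:3.2}. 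Neither of these points is supplied by your sketch, so the mutual exclusivity of the type II and type IV cases is not established by your argument.
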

\begin{proof} This is a reformulation of the definition of the types of links.
\end{proof}

The next Proposition is an extension of Lemma~\ref{lem:3.3} to affine spaces of arbitrary dimension. Explicitly, if
$H$ is in general position with respect to $\mcal C_\mathfrak R$, there is a subring $\mathfrak R_H$ whose support $\mcal C_{\mathfrak R,H}$ is the cone over $\mcal C_\mathfrak R\cap H$. There is a natural geography of models associated to the convex geometry of $\mcal C_{\mathfrak R,H}$, which behaves like the geography of models of a divisorial ring on some $Z'$ of Picard rank $\rho(Z')= \dim H+1$. 

\begin{pro}\label{pro:3.3} Let $H$ be an affine space of dimension $d\leq \rho(Z)-1$ in general position with respect to $\mcal C_\mathfrak R$. There is a finitely generated ring $\mathfrak R_H$ whose support $\mcal C_{\mathfrak R, H}$ has dimension $d+1$ and a decomposition  
\begin{equation}\label{eq:3.4}
\mcal C_{\mathfrak R,H}=\coprod \mcal A'_i
\end{equation}
into cones $\mcal A'_i$ with base $\mcal A_i\cap H$, where $\mcal A_i$ are the cones in  \eqref{eq:3.2}; the cones $\mcal A'_i$ in \eqref{eq:3.4} satisfy (1--4) in Theorem~\ref{thm:2.1}. 

For every $\mcal A_i$ in \eqref{eq:3.2}, when $\{0\}\neq\mcal A'_i$, the codimension of $\mcal A'_i$ in $\mcal C_{\mathfrak R,H}$ is that of $\mcal A_i$ in $\mcal C_\mathfrak R$. 
If $\vphi_i\colon Z\lto Z_i$ is the ample model associated to $\mcal A'_i$ with $\dim \mcal A'_i= d+1$, $Z_i$ is $\Q$-factorial. 

There are polyhedral complexes $\mcal C_{\mathfrak R,H}$, $\mcal B^+(\mcal C_{\mathfrak R, H})$ naturally associated to \eqref{eq:3.4}; $ \mcal N_{\mathfrak R, H} =\Ner (\mcal B^+(\mcal C_{\mathfrak R, H}))^{(d)}$ is a subcomplex of $\mcal N_\mathfrak R$.  
 \end{pro}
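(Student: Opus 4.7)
My plan is to construct $\mathfrak R_H$ directly from the finitely many rational vertices cut out by $H$ on $\mcal C_\mathfrak R$, inherit the decomposition by intersecting each $\mcal A_i$ with $H$, and then use general position to guarantee that codimensions are preserved. Once that is set up, $\Q$-factoriality and the subcomplex statement follow from Proposition~\ref{pro:2.1} and Theorem~\ref{thm:2.1} applied to the top-dimensional cones of the new decomposition.

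First I would observe that $\mcal P_H := H \cap \mcal C_\mathfrak R$ is a rational polytope, since $\mcal C_\mathfrak R$ is cut out by finitely many rational half-spaces and $H$ is defined over $\Q$. Because $H$ does not pass through the origin, the vertices $v_1, \dots, v_m$ of $\mcal P_H$ are positive rational multiples of klt adjoint divisors $K_Z + \Delta'_j \in \mcal C_\mathfrak R$, and I would set
\[
\mathfrak R_H \;=\; R(Z;\,K_Z+\Delta'_1,\,\dots,\,K_Z+\Delta'_m).
\]
By \cite[Theorem 3.2]{CaL10}, $\mathfrak R_H$ is finitely generated and its support $\mcal C_{\mathfrak R,H}$ is the cone over $\mcal P_H$, which has dimension $d+1$.

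Next, I define the candidate decomposition by $\mcal A'_i = \R_+ \cdot (\mcal A_i \cap H)$, understood as $\{0\}$ when the intersection is empty. General position of $H$ with respect to $\mcal C_\mathfrak R$ ensures that $H$ meets the relative interior of each $\mcal A_i$ transversally, so whenever $\mcal A_i \cap H \neq \emptyset$, the codimension of $\mcal A_i \cap H$ in $\mcal P_H$ equals the codimension of $\mcal A_i$ in $\mcal C_\mathfrak R$. Passing to cones yields the codimension assertion of the proposition, and the disjointness of the $\mcal A'_i$ follows from that of the $\mcal A_i$. Properties (1--4) of Theorem~\ref{thm:2.1} for \eqref{eq:3.4} then transfer: for any $D \in \mcal A'_i \cap \Div_\Q(Z)$ the section ring $R(Z,D)$ is unchanged, so the ample model of $D$ is still the original $\vphi_i$ associated to $\mcal A_i$; the morphisms $\vphi_{ij}$ between the $Z_i$ and the semiample-model property over cones containing a big divisor descend in the same way. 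For the $\Q$-factoriality claim, if $\dim \mcal A'_i = d+1$, then codimension preservation forces $\dim \mcal A_i = \rho(Z)$, and Proposition~\ref{pro:2.1}(3) gives that $\vphi_i$ is the result of a $(K_Z+\Delta)$-MMP, so $Z_i$ is $\Q$-factorial.

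Finally, I would take $\mcal C_{\mathfrak R, H}$ and $\mcal B^+(\mcal C_{\mathfrak R, H})$ to be the polyhedral complexes with objects $\overline{\mcal A'_i}$ (for $i \in I$ and $i \in \mcal B^+(I)$ respectively) and face inclusions as morphisms; codimension preservation makes them pure of dimensions $d+1$ and $d$. The assignment $\mcal A_i \mapsto \mcal A'_i$ respects codimensions and inclusions, so it sends the facets of $\mcal B^+(\mcal C_{\mathfrak R, H})$ to those facets of $\mcal B^+(\mcal C_\mathfrak R)$ that meet $H$, and more generally codim-$k$ faces to codim-$k$ faces. Dualising, this embeds the $d$-skeleton $\mcal N_{\mathfrak R, H}$ as a simplicial subcomplex of $\mcal N_\mathfrak R$. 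The main obstacle I foresee is making general position precise enough to deliver codimension preservation for \emph{every} face $\mcal A_i$ simultaneously rather than only the top-dimensional ones; the finiteness of the decomposition \eqref{eq:3.2} together with Lemma~\ref{lem:3.2} reduces this to excluding finitely many special affine subspaces, but writing the transversality bookkeeping carefully is the step that deserves the most attention.
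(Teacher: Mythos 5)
Your proposal is correct and follows essentially the same route as the paper: define $\mathfrak R_H$ from the vertices of the rational polytope $H\cap\mcal C_\mathfrak R$, invoke finite generation via \cite[Theorem 3.2]{CaL10}, then use general position to preserve codimensions of faces and to transfer properties (1--4) and the subcomplex structure. Your closing worry about transversality bookkeeping for \emph{all} faces at once is exactly what Definition~\ref{dfn:3.3} of general position is calibrated to deliver, and the paper treats it as immediate for the same reason.
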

\begin{proof} Since $H$ is in general position with respect to $\mcal C_\mathfrak R$, $H\cap \mcal C_\mathfrak R$ is a rational polytope of dimension $\rho(Z)-d-1$ whose vertices are of the form $\varepsilon_1(K_Z+\Delta_1), \dots, \varepsilon_m(K_Z+\Delta_m)\in \mcal C_\mathfrak R$ for $\varepsilon_i\in \Q$, $(Z,\Delta_i)$ klt and $\Delta_i$ big $\Q$-divisors. Define $\mathfrak R_H= R(Z; K_Z+\Delta_1, \dots, K_Z+\Delta_m)$; then $\mathfrak R_H$ is finitely generated and its support is the cone over $H\cap \mcal C_\mathfrak R$.
The properties of the cones in the decomposition of $\mathfrak R_H$ then follow from the fact that $H$ is in general position. Also, if $\mcal A_i\subset \overline{\mcal A_j}$ and $\mcal A_i\cap H\neq \emptyset$, then $\mcal A_j\cap H\neq \emptyset$ and $\dim \mcal A_j-\dim \mcal A_i= \dim \mcal A'_j- \dim \mcal A'_i$, so that $\mcal N_{\mathfrak R, H}$ is a subcomplex of $\mcal N_\mathfrak R$.  

 \end{proof}
\begin{dfn} Let $\mcal C$ be a polyhedral complex, and $\overline{\mcal A}\in \Ob(\mcal C)$. The \emph{residual complex} $\resi_\mcal A(\mcal C)$ is the minimal subcomplex of $\mcal C$ containing all objects $c\in \Ob \mcal C$ such that there is a map $\{f\colon \mcal A\lto c\} \in \Mor \mcal C$. \end{dfn}

\begin{lem} \label{lem:3.4}Let $\mcal A= \mcal A_{i_0}$ be a cone in \eqref{eq:3.3}, where $i_0\in \mcal B^+(I)$. 
\begin{enumerate}
\item[1.] Let $\overline{\mcal A_1}, \overline{\mcal A_2}$ be facets of $\resi_\mcal A \mcal C_\mathfrak R$ (and hence of $\mcal C_\mathfrak R$). Then, $Z_1\dashto Z_2$ is the composition of a finite number of maps $Z_j\dashto Z_{j'}$ that are either isomorphisms in codimension $1$, morphisms that contract a single divisor, or inverses of morphisms contracting a single divisor and for all $j,j'$, the diagram 
\[
\xymatrixrowsep{0.2in}
\xymatrixcolsep{0.3in}
\xymatrix{ 
Z_j \ar@{-->}[rr] \ar[dr] & \quad & Z_{j'}
  \ar[dl]\\
\quad & Z_{i_0} & \quad
}\]
commutes.  
\item[2.] If $H$ is an affine subspace in general position with respect to $\mcal C_\mathfrak R$ and $\mcal A$, then $\resi_\mcal A\mcal C_{\mathfrak R}\subseteq \mcal C_{\mathfrak R, H}$.
\item[3.] If $\mcal A_1, \dots , \mcal A_n$ are the cones in \eqref{eq:3.2} such that $H$ is in general position with respect to $\mcal A_1, \dots , \mcal A_n$, then $ \mcal C_{\mathfrak R, H}= \cup \resi_{\mcal A_i}(\mcal C_\mathfrak R)$. 
\end{enumerate}
\end{lem}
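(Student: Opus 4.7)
I would prove the three parts in the order (2), (3), (1), since the identification between residuals and affine sections drives everything.

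For (2), the key point is a monotonicity: if $\mcal A\subseteq\overline{\mcal A_j}$ and $\mcal A\cap H\neq\emptyset$, then $\mcal A_j\cap H\neq\emptyset$. This follows from the general position assumption: if $\mcal A_j\cap H$ were empty, then near any point $p\in H\cap\mcal A$, the intersection $H\cap\mcal C_\mathfrak R$ would lie entirely in $\bigcup_{k\neq j}\overline{\mcal A_k}$, i.e.~in a strict union of cones of the decomposition \eqref{eq:3.2}, contradicting general position of $H$. Applied to every $\overline{\mcal A_j}$ in $\resi_\mcal A\mcal C_\mathfrak R$, this yields $\mcal A_j\cap H\neq\emptyset$, and the assignment $\overline{\mcal A_j}\mapsto\overline{\mcal A_j\cap H}$ realizes $\resi_\mcal A\mcal C_\mathfrak R$ as a subcomplex of $\mcal C_{\mathfrak R,H}$, with dimensions and face relations preserved by Proposition~\ref{pro:3.3}.

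For (3), the inclusion $\supseteq$ is immediate from (2) applied to each $\mcal A_i$. Conversely, every non-trivial cone $\mcal A_j'$ of $\mcal C_{\mathfrak R,H}$ arises from a cone $\mcal A_j$ of \eqref{eq:3.2} with $\mcal A_j\cap H\neq\emptyset$; by hypothesis $\mcal A_j$ then appears in the list $\mcal A_1,\ldots,\mcal A_n$, and trivially $\mcal A_j\in\resi_{\mcal A_j}\mcal C_\mathfrak R$, so $\mcal C_{\mathfrak R,H}\subseteq\bigcup_i\resi_{\mcal A_i}\mcal C_\mathfrak R$.

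For (1), choose $H$ of dimension $\rho(Z)-1-\dim\mcal A$ in general position with respect to $\mcal C_\mathfrak R$ and $\mcal A$, available by Lemma~\ref{lem:3.2}. By (2), both facets $\overline{\mcal A_1},\overline{\mcal A_2}$ of $\resi_\mcal A\mcal C_\mathfrak R$ correspond to facets of $\mcal C_{\mathfrak R,H}$ with the same ample models $Z_1,Z_2$, since every element of $\mcal A_j\cap H$ already lies in $\mcal A_j$. The ring $\mathfrak R_H$ of Proposition~\ref{pro:3.3} satisfies the hypotheses of Definition-Lemma~\ref{dfnlm:2.1}: its support is a rational polyhedral cone containing no line. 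So edge paths in $\Ner\mcal C_{\mathfrak R,H}$ from $v_1$ to $v_2$ correspond to decompositions of $Z_1\dashto Z_2$ into elementary contractions and their inverses. I take such an edge path inside the subcomplex $\Ner(\resi_\mcal A\mcal C_\mathfrak R)\subseteq\Ner\mcal C_{\mathfrak R,H}$; this is possible because $|\resi_\mcal A\mcal C_\mathfrak R|$ is star-shaped about any point of $\mcal A$, so a generic piecewise linear path from the interior of $\overline{\mcal A_1}$ to the interior of $\overline{\mcal A_2}$ crossing only codimension one faces stays inside the residual. Each codimension one face crossed contains $\mcal A$ in its closure; by Theorem~\ref{thm:2.1}(3), the associated ample model factors $\vphi_{i_0}$, and consequently every elementary step of the decomposition commutes with the morphism to $Z_{i_0}$.

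The main obstacle is the monotonicity step in (2)—ruling out that $H$ meets $\overline{\mcal A_j}$ only along its boundary rather than in the relative interior $\mcal A_j$. Once this local argument is secured via the contradiction with general position, the remainder is combinatorial bookkeeping on face incidences together with a direct application of the ample-model machinery developed in Sections~\ref{prelim} and~\ref{subsec:3.1}.
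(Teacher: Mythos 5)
Your proposal is correct but takes a genuinely different route for part (1). The paper's proof of (1) never invokes an affine subspace $H$: by Theorem~\ref{thm:2.1}(3) there are morphisms $Z_1\to Z_{i_0}$ and $Z_2\to Z_{i_0}$ factoring $\vphi_{i_0}$, and decomposing $\vphi_1\colon Z\to Z_1$ and $\vphi_2\colon Z\to Z_2$ into elementary contractions via Proposition~\ref{pro:2.1}(4) immediately gives the required chain for $Z_1\dashto Z_2$; every intermediate variety in that chain maps to $Z_1$ or $Z_2$ and hence to $Z_{i_0}$, so the commuting diagrams come for free. You instead prove (2) and (3) first and then find an edge path inside $\Ner(\resi_\mcal A\mcal C_\mathfrak R)$, viewed as a subcomplex of $\Ner\mcal C_{\mathfrak R,H}$. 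That is a valid and more ``geometric'' route, closer in spirit to the path arguments of Lemma~\ref{lem:4.1}, but it builds in machinery the paper avoids for this statement. For (2)--(3) you are essentially rederiving the monotonicity the paper already records inside the proof of Proposition~\ref{pro:3.3} (if $\mcal A_i\subset\overline{\mcal A_j}$ and $\mcal A_i\cap H\neq\emptyset$ then $\mcal A_j\cap H\neq\emptyset$, with codimension preserved), which is why the paper treats (2)--(3) as immediate.

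One step of your (1) deserves tightening: star-shapedness of $|\resi_\mcal A\mcal C_\mathfrak R|$ about a point $x_0\in\mcal A$ does not by itself yield a piecewise linear path from $\mcal A_1$ to $\mcal A_2$ that crosses only codimension one faces \emph{and} stays in the residual, since the obvious path through $x_0$ passes through a face of high codimension. You need to perturb $x_0$ into the relative interior of a maximal chamber and then argue that the perturbed path remains in $|\resi_\mcal A\mcal C_\mathfrak R|$; this uses that the $\mcal A_i$ form a fan-like decomposition, so that every point sufficiently close to $\mcal A$ lies in a chamber containing $\mcal A$ in its closure, i.e.~a neighbourhood of $\mcal A$ in $\mcal C_\mathfrak R$ is contained in the residual. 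Once that local statement is in place, your argument goes through. Similarly, your contradiction argument for the monotonicity in (2) is clean when $\mcal A_j$ is a facet of $\mcal C_\mathfrak R$, but for lower-dimensional $\mcal A_j$ the set $\bigcup_{k\neq j}\overline{\mcal A_k}$ covers $\mcal C_\mathfrak R$, so the contradiction with general position is not immediate; the cleaner route is again the fan structure and a codimension count as in Proposition~\ref{pro:3.3}.
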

\begin{proof}
For (1), recall that by (3) in Theorem~\ref{thm:2.1}, the diagram  
\[
 \xymatrixrowsep{0.15in}
\xymatrixcolsep{0.25in}
\xymatrix{\quad &Z\ar[dl]_{\vphi_1}\ar[dr]^{\vphi_{2}}& \quad \\ 
Z_1 \ar@{-->}[rr] \ar[dr]_{\vphi_1, i_0} & \quad & Z_{2}
  \ar[dl]^{\vphi_{2, i_0}}\\
\quad & Z_{i_0} & \quad}
\]
commutes; the result follows immediately from the decomposition of the morphisms $\vphi_1, \vphi_2$ into elementary contractions of Proposition~\ref{pro:2.1}(4).
(2--3) follow from the fact that $H$ is in general position and from the definition of the decomposition \eqref{eq:3.2}. 
\end{proof}

\begin{dfnlm} \label{dfnlm:3.1}Let $\mcal A=\mcal A_{i_0}$ be a cone in \eqref{eq:3.3} of dimension $d$, where $i_0\in \mcal B^+(I)$. There is an $(\rho(Z)-d-1)$-dimensional affine subspace $H_\mcal A$ in general position with respect  to $\mcal C_\mathfrak R$ such that the cones $\mcal A'_i$ in the decomposition  \eqref{eq:3.4} of $\mcal C_{\mathfrak R,{H_\mcal A}}$ are in $1$-to-$1$ correspondence with the objects of $\resi_\mcal A \mcal C_\mathfrak R$. 
The ring $\resi_\mcal A \mathfrak R=  \mathfrak R_{H_\mcal A}$ is \emph{residual to $\mcal A$}. 
\end{dfnlm}
\begin{proof} 
Let $H$ be a $(\rho(Z)-d)$-dimensional affine subspace in general position with respect to $\mcal C_\mathfrak R$ such that $\mcal A\cap H$ is $0$-dimensional and $\mcal C_\mathfrak R\cap H$ is $(\rho(Z)-d)$-dimensional. The intersection $\mcal A\cap H$ is convex, so that $\mcal A\cap H= \{D\}$ is a single point. 

Let $\mcal A_1,\dots, \mcal A_n$ be the cones distinct from $\mcal A$ in \eqref{eq:3.3} that are of dimension $d$ and that intersect $H$. Denote by $D_1, \dots , D_n$ the points $\mcal A_1\cap H, \dots , \mcal A_n\cap H$: $D$ and $D_1, \dots, D_n$ are vertices of the polytope $\mcal C_\mathfrak R\cap H$.  Therefore, we may choose $H_\mcal A\subseteq H$ a suitably small translation of the supporting hyperplane of $\mcal C_\mathfrak R\cap H$ at $D$ that is in general position with respect to $\mcal C_\mathfrak R$, and such that $D$ belongs to one of the half-space of $H$ defined by $H_\mcal A$, the vertices $D_1, \dots, D_n$ to the other half-space, and the only cones $\mcal A_i'$ that have nonempty intersection with $H_\mcal A$ contain $D$ (i.e.~are of the form $\mcal A'_i= \mcal A_i\cap H$, with $\mcal A\subset \overline{\mcal A_i}$). 

Let $ \mcal C_{\mathfrak R,{H_\mcal A}}= \coprod \mcal A"_i$
be the decomposition induced by \eqref{eq:3.2} on the support of $\mathfrak R_{H_\mcal A}$. Then, by construction and by (2) in Lemma~\ref{lem:3.4}, there is a $1$-to-$1$ correspondence between cones $\{0\}\neq \mcal A"_i$ and objects $\overline{\mcal A}\neq \overline{\mcal A_i}$ of $\resi_\mcal A \mcal C_\mathfrak R$; this is what we wanted. 
\end{proof}
The next Lemma is a reformulation of a key idea in \cite{HM09}, and implies Theorem~\ref{thm:0.2}. It shows that, similarly to Definition-Lemma~\ref{dfnlm:2.1}, decompositions of birational maps between Mori fibre spaces correspond to edge paths on $\mcal N_\mathfrak R$--or equivalently, as in Remark~\ref{rem:2.6}, to suitable paths on $\partial^+\mcal C_\mathfrak R$.
\begin{lem} \label{lem:4.1}
Assume the setup of Notation~\ref{nt}. 
Let $\Phi\colon X/S\dashto Y/T$ be a map in the collection ($\Phi=\Phi_{j,j'}$ for some $j,j'$).
If $v_{X/S}$ and $v_{Y/T}$ are the vertices of $\mcal N_\mathfrak R$ associated to $X/S$ and $Y/T$, there is an edge path $(v_0, \dots, v_n)$ on $\mcal N_\mathfrak R$ with $v_0= v_{X/S}$ and $v_n=v_{Y/T}$ on $\mcal N_\mathfrak R$. Further,
$\Phi\simeq L_{n-1, n}\circ \dots \circ L_{0,1},$
where $L_{j,j+1}$ is the elementary Sarkisov link associated to the $1$-simplex $\{v_j, v_{j+1}\}$.

There is a $1$-to-$1$ correspondence between edge paths on $\mcal N_\mathfrak R$ and the decompositions of birational maps $X_j/S_j\dashto X_{j'}/S_{j'}$ into elementary Sarkisov links dominated by $Z$. 
\end{lem}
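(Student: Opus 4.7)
The plan is to produce a decomposition of $\Phi$ by walking along the non-big boundary $\partial^+\mcal P_\mathfrak R$ and reading off the crossings as Sarkisov links via Lemma~\ref{lem:3.3}, and then to note that the same dictionary yields the bijection between edge paths and decompositions. By Proposition~\ref{pro:3.1}(2), $X/S$ and $Y/T$ correspond to facets $\overline{\mcal A_{j_1}},\overline{\mcal A_{j'_1}}$ of $\mcal B^+(\mcal C_\mathfrak R)$ dual to the vertices $v_{X/S}$ and $v_{Y/T}$. Pick $\Q$-divisors $D_X\in\mcal A_{j_1}$ and $D_Y\in\mcal A_{j'_1}$; both lie in $\partial^+\mcal C_\mathfrak R$.

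I would then connect $D_X$ and $D_Y$ by a piecewise-linear path $\gamma$ in $\partial^+\mcal C_\mathfrak R$ that is transverse to the walls of the decomposition. To do this, choose a $2$-dimensional affine subspace $H\subset V$ containing scalar multiples of $D_X$ and $D_Y$ and in general position with respect to $\mcal C_\mathfrak R$ (using Lemma~\ref{lem:3.2}, possibly enlarging $H$ to a higher-dimensional generic affine subspace and reducing to a $2$-dim general slice through both points). By Proposition~\ref{pro:3.3}, $\mcal C_{\mathfrak R,H}$ inherits a finite polyhedral decomposition whose nerve $\mcal N_{\mathfrak R,H}$ is a subcomplex of $\mcal N_\mathfrak R$; in the $2$-dimensional slice, $\partial^+\mcal C_{\mathfrak R,H}$ is a $1$-dimensional simplicial complex on which $D_X$ and $D_Y$ both lie, and the path $\gamma$ is taken as a chain of consecutive edges joining them (connectedness of the relevant portion of $\partial^+\mcal C_{\mathfrak R,H}$ is guaranteed because the assumption in Notation~\ref{nt} that the collection $\{X_j/S_j\}$ exhausts all MMPs with scaling in $V$ forces every maximal boundary edge to correspond to some $X_j/S_j$, and one may enlarge $H$ if necessary to bypass any big chamber separating $D_X$ from $D_Y$).

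The path $\gamma$ traverses a sequence of maximal faces of $\mcal B^+(\mcal C_{\mathfrak R,H})$, each sitting inside a unique facet of $\mcal B^+(\mcal C_\mathfrak R)$ by Proposition~\ref{pro:3.2}; this records a sequence of vertices $v_0=v_{X/S},v_1,\ldots,v_n=v_{Y/T}$ of $\mcal N_\mathfrak R$. Each transverse crossing happens at a face of dimension $\rho(Z)-2$ in $\partial^+\mcal C_\mathfrak R$, which is the dual of an edge of $\mcal N_\mathfrak R$, and by Lemma~\ref{lem:3.3} determines an elementary Sarkisov link $L_{j,j+1}$ dominated by $Z$. To see that $\Phi\simeq L_{n-1,n}\circ\cdots\circ L_{0,1}$: both sides are birational maps between the same ample models $Z_{j_0}$ and $Z_{j'_0}$, and both factor through $Z$ as $\vphi_{j'_0}\circ\vphi_{j_0}^{-1}$ (the composition on the right because each $L_{j,j+1}$ is realised inside the geography of ample models of $\mcal C_\mathfrak R$, compatibly with the ample models at the intermediate vertices); therefore they coincide. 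Finally, the same argument applied at each $1$-simplex shows that any edge path on $\mcal N_\mathfrak R$ yields a decomposition of the induced birational map between the MFSs at its endpoints, and any decomposition of $\Phi_{j,j'}$ into elementary links dominated by $Z$ determines, by reading off the intermediate MFSs, an edge path on $\mcal N_\mathfrak R$; this gives the asserted bijection. The main obstacle is the construction of $\gamma$: ensuring that the generic $2$-dimensional slice $H$ can be chosen so that $D_X$ and $D_Y$ are connected in $\partial^+\mcal C_{\mathfrak R,H}$, which is where the hypothesis that the collection exhausts all MMPs with scaling in $V$ (Notation~\ref{nt}) together with Lemma~\ref{lem:3.2} must be used carefully.
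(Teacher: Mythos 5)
Your overall strategy matches the paper's: pass to a $2$-dimensional affine slice $H$ in general position, walk along the non-big boundary of the polytope $\mcal C_\mathfrak R\cap H$, and read off a Sarkisov link at each wall crossing via Lemma~\ref{lem:3.3}. You also correctly identify the crux: showing $\mcal Q_S$ and $\mcal Q_T$ lie in the same connected component of the non-big boundary of the slice.

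However, this connectivity step is a genuine gap in your argument, and the fix you gesture at does not work. Knowing that every $(\rho(Z)-2)$-dimensional face on $\partial^+\mcal C_\mathfrak R$ corresponds to some Mori fibre space in the collection tells you nothing about whether the non-big boundary of a generic $2$-dimensional slice through $D_X$ and $D_Y$ is connected: a big chamber of $\mcal C_\mathfrak R\cap H$ could perfectly well separate the arcs $\mcal Q_S\cap H$ and $\mcal Q_T\cap H$, and ``enlarge $H$ if necessary to bypass any big chamber'' is an aspiration, not an argument. The ingredient the paper uses instead is Proposition~\ref{pro:3.1}(3): there exist a single klt pair $(Z,\Delta)$ with $K_Z+\Delta\in V$ \emph{not} pseudoeffective and ample $\Q$-divisors $A_X,A_Y\in V$ such that both $f\colon Z\dashto X$ and $g\colon Z\dashto Y$ arise as $(K_Z+\Delta)$-MMPs with scaling by $A_X$ and $A_Y$ respectively. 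Taking $H=(K_Z+\Delta)+\R_+A_X+\R_+A_Y$ (perturbed to be in general position via Lemma~\ref{lem:3.2}), the sweep $\mcal T=\{K_Z+\Delta+\lambda(tA_X+(1-t)A_Y)\mid\lambda>0,\,t\in[0,1]\}\cap\partial\mcal P_{\mathfrak R,H}$ is $1$-dimensional, meets both $\mcal Q_S$ and $\mcal Q_T$ in $1$-dimensional sets, and --- because $K_Z+\Delta$ is outside $\Effb(Z)$ and because the scaling condition puts $K_Z+\Delta+A_X$, $K_Z+\Delta+A_Y$ on $\partial\Effb$ --- no point of $\mcal T$ can be big. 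Hence $\mcal T\subseteq\partial^+\mcal P_{\mathfrak R,H}$ and connects $\mcal Q_S$ to $\mcal Q_T$. Without an argument of this kind, the existence of the edge path (and therefore the asserted decomposition) is unproved. The remainder of your argument --- matching crossings to edges, identifying the composite of links with $\Phi$ via $\vphi_{j_0'}\circ\vphi_{j_0}^{-1}$, and the converse direction --- is fine once connectivity is in hand.
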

\begin{proof} Let $\overline{\mcal A_S}$ and $\overline{\mcal A_T}$ be the facets of $\mcal B^+(\mcal C_\mathfrak R)$ dual to $v_{X/S}$ and $v_{Y/T}$, and $\overline{\mcal A_X}, \overline{\mcal A_Y}$ the associated facets of $\mcal C_\mathfrak R$. Denote by $f\colon Z\lto X$ and $g\colon Z\lto Y$ the ample models associated to $\mcal A_X, \mcal A_Y$. 

I first show that if $\Phi$ is dominated by $Z$, there is a well defined edge path on $\mcal N_\mathfrak R$ from $v_{X/S}$ to $v_{Y/T}$ that decomposes $\Phi$ into elementary links. It is enough to prove that there is a $2$-dimensional affine subspace $H$ of $V$ in general position with respect to $\mathfrak R$  such that $v_{X/S}$ and $v_{Y/T}$ belong to the same connected component of $\mcal N_{\mathfrak R,H}$.  Dually, this is equivalent to showing that $\mcal A_S\cap H$ and $\mcal A_T\cap H$ belong to the same connected component of $ \partial^+ \mcal P_\mathfrak R\cap H$. 

By (3) in Proposition~\ref{pro:3.1}, there is a klt pair $(Z, \Delta)$, with $K_Z+\Delta\in V$ not pseudoeffective, and ample $\Q$-divisors $A_X$ and $A_Y$ such that $f$ and $g$ are results of log-MMPs for $(Z, \Delta)$. These results of log-MMPs are MMPs with scaling by $A_X$ and $A_Y$, i.e.~$K_Z+\Delta+A_X\in \mcal A_S$ and $K_Z+\Delta+A_Y\in \mcal A_T$; this forces $K_Z+\Delta+(1+\varepsilon)A_X\in \mcal A_X$ and $K_Z+\Delta+(1+\varepsilon)A_Y\in \mcal A_T$ for $0<\varepsilon <\!<\!1$. 
By Lemma~\ref{lem:3.2}, up to small perturbation of $A_X, A_Y$, and $\Delta$, we may further assume that $H= (K_Z+\Delta) + \R_+A_X+ \R_+ A_Y \subseteq V$ is in general position with respect to $\mcal C_\mathfrak R$. Denote by $\mcal P_{\mathfrak R,H}= \mcal C_\mathfrak R\cap H$ and by $\mcal Q_i= \mcal A_i\cap H$. By construction, $\mcal Q_S$ and $\mcal Q_T$ and hence also $\partial^+ \mcal P_{\mathfrak R,H}$ are $1$-dimensional. 
Since $K_Z+\Delta\not \in \mcal P_{\mathfrak R,H}$, $$\mcal T= \{K_Z+\Delta +\lambda (tA_X+ (1-t)A_Y)\mid \lambda>0, t\in [0,1]\} \cap \partial \mcal P_{\mathfrak R,H}$$ is $1$-dimensional, and intersects $\mcal Q_S$ and $\mcal Q_T$ in a $1$-dimensional locus. A divisor $D\in \mcal T$ is not big, as this would imply that there is $t\leq 1$ such that $K_Z+\Delta+ tA_X$ or $K_Z+\Delta+tA_Y$ is big. Therefore, $\mcal T \subseteq \partial^+ \mcal P_{\mathfrak R,H}$ and $\mcal Q_S$ and $\mcal Q_T$ belong to the same component of $\partial^+\mcal P_{\mathfrak R,H}$.     

Since $v_{X/S}$ and $v_{Y/T}$ belong to the same path-connected component of $\mcal N_\mathfrak R$, there is an edge path $(v_0, \dots, v_n)$ on $\mcal N_{\mathfrak R}$ with $v_0=v_{X/S}$ and $v_n=v_{Y/T}$. Since $\Phi\simeq f_{n}\circ f_{0}^{-1}$ and since each $L_{j,j-1}\simeq f_{j}\circ f_{j-1}^{-1}$, $\Phi\simeq L_{n, n-1}\circ \dots \circ L_{0,1}$ is a decomposition of $\Phi$ into elementary Sarkisov links. 

Conversely, an edge path of $\mcal N_\mathfrak R$ defines a birational map $\Phi$ which is the composition of the elementary Sarkisov links associated to its edges, and $\Phi$ is dominated by $Z$ by construction. 
 \end{proof}

\section{Relations in the Sarkisov Program}
\label{relations}
Let $\Phi_{j,j'}\colon X_j/S_j \dashto X_{j'}/S_{j'}$ be a birational map between Mori fibre spaces;  by Theorem~\ref{thm:0.2}, $\Phi_{j,j'}$ is the composition of a finite number of elementary Sarkisov links. However, in general, this decomposition needs not be unique. In this section, I study \emph{relations in the Sarkisov program}, i.e.~distinct factorisations of this form of a given $\Phi_{j,j'}$. More precisely, I show that a factorisation of $\Phi_{j,j'}$ into elementary Sarkisov links corresponds to an \emph{edge path} on a suitable simplicial complex $\mcal N_\mathfrak R$. As a consequence, \emph{relations} are determined by the fundamental group of $\partial^+\mcal C_\mathfrak R$, the locus of $\mcal C_\mathfrak R$ that consists of non-zero divisors that are not big, where $\mathfrak R$ is a suitable divisorial ring.

\subsection{Sarkisov Program and edge paths on $\mcal N_{\mathfrak R}$} \label{subsec:4.1}
In this Section, I define elementary relations in the Sarkisov program and show that they correspond to generators of the edge path group of $\mcal N_\mathfrak R$.

\begin{dfn} \label{dfn:4.2}
A \emph{non-trivial relation in the Sarkisov Program} is a composition of $r>2$ Sarkisov links
\begin{equation}\label{eq:4.1} \id \simeq L_{r-1,r}\circ \dots \circ L_{0,1}\end{equation}
which defines an automorphism of $X_0\simeq X_r$ that commutes with $X_0\to S_0$.

The relation \eqref{eq:4.1} is \emph{elementary} if, in addition, no proper subchain of links in \eqref{eq:4.1} forms a relation, i.e.~$L_{j, j-1}\circ \dots \circ L_{i+1, i}\not \simeq \id$ for any $1< i<j\leq r$, and if for all $i$, $L_{i+1,i}\circ L_{i,i-1}$ is not an elementary Sarkisov link. 

The relation is \emph{dominated by} a normal projective variety $Z$ if $Z$ dominates all the elementary Sarkisov links $L_{j,j-1}$. \end{dfn}

Consider a relation in the Sarkisov Program, and let $Z$ and $\mathfrak R$ be associated to the collection $\{X_1/S_1, \dots, X_{r-1}/S_{r-1}; L_{j+1,j}\}$ as in Proposition~\ref{pro:3.1}. 
By Lemma~\ref{lem:4.1}, we may assume that $\mcal N_\mathfrak R$ is connected (or equivalently that $\partial^+\mcal C_\mathfrak R$ is connected and purely $(\rho(Z)-1)$-dimensional) to study birational maps dominated by $Z$. This is not restrictive: given a map $\Phi\colon X/S\dashto Y/T$ dominated by $Z$, $\mathfrak R$ may be replaced by a subring to get rid of components of $\partial^+\mcal C_\mathfrak R$ that do not contain $v_{X/S}$ and $v_{Y/T}$. This is achieved as in the proof of (4) in Proposition~\ref{pro:3.1}.
The next result is a simple consequence of Lemma~\ref{lem:4.1}.
\begin{cor}\label{cor:4.1} There is a $1$-to-$1$ correspondence between elementary relations in the Sarkisov Program dominated by $Z$ and equivalence classes of edge paths $(v_0, \dots , v_n)$ on $\mcal N_{\mathfrak R}$ with $v_0= v_n$ and $v_i\neq v_j$ for any $0<i<j\leq n$. Equivalently, there is a $1$-to-$1$ correspondence between elementary relations in the Sarkisov Program dominated by $Z$ and generators of the fundamental group $\pi_1(\partial^+\mcal P_\mathfrak R)$. When $\rho(Z)\geq 4$, $\pi_1(\partial^+ \mcal P_\mathfrak R)\simeq \pi_1(\partial^+ \mcal C_\mathfrak R)$. 
\end{cor}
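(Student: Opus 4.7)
The plan is to argue that the two elementariness conditions in Definition~\ref{dfn:4.2} translate exactly into two combinatorial conditions on closed edge paths in $\mcal N_\mathfrak R$, and then to use the simplicial identification of $E(\mcal N_\mathfrak R,v)$ with $\pi_1$. The main workhorse is Lemma~\ref{lem:4.1}, which provides the bijection between edge paths on $\mcal N_\mathfrak R$ and factorisations into elementary Sarkisov links dominated by $Z$.

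First I would set up the basic correspondence. An edge loop $(v_0,\dots,v_n)$ with $v_0=v_n$ on $\mcal N_\mathfrak R$ gives, via Lemma~\ref{lem:4.1} applied with $X/S=Y/T=X_0/S_0$, a composition $L_{n-1,n}\circ\cdots\circ L_{0,1}$ which is an automorphism of $X_0/S_0$ commuting with the fibration; this is a relation in the sense of \eqref{eq:4.1}. Conversely, any relation dominated by $Z$ admits, again by Lemma~\ref{lem:4.1}, a decomposition corresponding to a closed edge path.

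Next I would translate the two elementariness conditions. Condition (i) (no proper subchain is itself a relation) is equivalent to the vertex-distinctness $v_i\neq v_j$ for $0<i<j\leq n$: a repeat $v_i=v_j$ gives a proper closed subpath, and, by Lemma~\ref{lem:4.1}, a proper subchain which is a relation. Condition (ii) (no $L_{i+1,i}\circ L_{i,i-1}$ is itself an elementary Sarkisov link) is equivalent to the condition that no triple $\{v_{i-1},v_i,v_{i+1}\}$ spans a $2$-simplex of $\mcal N_\mathfrak R$. For the forward direction: by Lemma~\ref{lem:3.3} the composition $L_{i+1,i}\circ L_{i,i-1}$ being elementary produces an edge $\{v_{i-1},v_{i+1}\}$, and by the commutative diagram of Lemma~\ref{lem:3.4}(1) the three ample models attached to $v_{i-1},v_i,v_{i+1}$ all factor through a single ample model; by the definition of the decomposition \eqref{eq:3.2}, the three facets $\overline{\mcal A_{(i-1)_1}}$, $\overline{\mcal A_{i_1}}$, $\overline{\mcal A_{(i+1)_1}}$ of $\mcal B^+(\mcal C_\mathfrak R)$ share a common $(\rho(Z)-3)$-dimensional face dual to a $2$-simplex of $\mcal N_\mathfrak R$. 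The converse direction uses Lemma~\ref{lem:3.3} applied to the edge $\{v_{i-1},v_{i+1}\}$ dual to the codimension-$1$ face of that common face. The combination of (i) and (ii) is exactly what it means for an equivalence class in $E(\mcal N_\mathfrak R,v)$ (as given by Definition~\ref{dfn:4.1}) to be representable by a ``simple'' edge loop that cannot be shortened by any elementary equivalence, i.e.\ a primitive generator of the edge path group.

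To conclude, I would identify $E(\mcal N_\mathfrak R,v)\simeq \pi_1(|\mcal N_\mathfrak R|,v)$ by Definition~\ref{dfn:4.1}, and apply Lemma~\ref{lem:2.0} to the pure polyhedral complex $\mcal B^+(\mcal Q_\mathfrak R)$ to obtain the homotopy equivalence $|\mcal N_\mathfrak R|=|\Ner\mcal B^+(\mcal Q_\mathfrak R)|\simeq |\mcal B^+(\mcal Q_\mathfrak R)|=\partial^+\mcal P_\mathfrak R$. The identification with $\pi_1(\partial^+\mcal C_\mathfrak R)$ when $\rho(Z)\geq 4$ follows from the fact that $\partial^+\mcal C_\mathfrak R\setminus\{0\}$ deformation retracts radially onto the base $\partial^+\mcal P_\mathfrak R$, which is connected and of dimension $\geq 2$ in that range, so the retraction preserves $\pi_1$. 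The main obstacle will be the verification in Step 3 that an elementary link of the form $L_{i+1,i}\circ L_{i,i-1}$ forces an actual $2$-simplex of $\mcal N_\mathfrak R$ (and not merely the third edge); this requires combining Lemmas~\ref{lem:3.3} and~\ref{lem:3.4}(1) with the structure of the ample-model decomposition \eqref{eq:3.2} to produce the common face of $\mcal B^+(\mcal C_\mathfrak R)$ of the correct dimension.
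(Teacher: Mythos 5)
Your proposal follows essentially the same route as the paper: Lemma~\ref{lem:4.1} supplies the dictionary between relations dominated by $Z$ and edge loops, condition (i) of Definition~\ref{dfn:4.2} becomes simplicity of the loop, condition (ii) becomes the absence of a simplex on consecutive triples (so the loop is a minimal representative of its equivalence class), and the identification with $\pi_1(\partial^+\mcal P_\mathfrak R)$ comes from $E(\mcal N_\mathfrak R,v)\simeq \pi_1(|\mcal N_\mathfrak R|)$ together with Lemma~\ref{lem:2.0} and Remark~\ref{rem:3.6}. The paper's own proof is terser at the step you single out: it simply asserts that if $L_{i+1,i}\circ L_{i,i-1}$ is not a Sarkisov link then $\{v_{i-1},v_i,v_{i+1}\}$ spans no simplex, so the extra work you propose via Lemmas~\ref{lem:3.3} and~\ref{lem:3.4}(1) is a filling-in of the same step rather than a different idea, and the ``main obstacle'' you flag is left implicit in the paper as well. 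The one genuine divergence is the final assertion: you deduce $\pi_1(\partial^+\mcal C_\mathfrak R\setminus\{0\})\simeq\pi_1(\partial^+\mcal P_\mathfrak R)$ from the radial deformation retraction of the punctured cone onto its base (valid in every Picard rank; note that a deformation retraction induces an isomorphism on $\pi_1$ irrespective of the connectivity or dimension clause you add), whereas the paper instead compares $\mcal N_\mathfrak R$ with $\Ner\mcal B^+(\mcal C_\mathfrak R)$ and uses that the edge path group depends only on the $2$-skeleton; the hypothesis $\rho(Z)\geq 4$ is precisely what makes those two $2$-skeletons agree, since for $\rho(Z)=3$ the simplices dual to the cone $\{0\}$ intervene. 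Your retraction argument is arguably cleaner and more general, but it matches the statement only under the reading, used elsewhere in the paper, that $\partial^+\mcal C_\mathfrak R$ is considered away from the apex.
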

\begin{proof}
Lemma~\ref{lem:4.1} associates to any non-trivial relation dominated by $Z$ an edge loop $(v_0, \dots , v_n)$ on $\mcal N_\mathfrak R$ based at $v_0$. 
When the relation is elementary, $(v_0, \dots, v_n)$ is a simple edge loop, because $v_i=v_j$ for $0\leq i<j<N$ would contradict the definition. Also, since $L_{i+1, i}\circ L_{i,i-1}$ is not a Sarkisov link, $\{v_i, v_{i+1}, v_i\}$ never spans a simplex, and $(v_0, \dots, v_n)$ is the ``minimal" edge loop in its equivalence class.
This shows that an elementary relation defines the equivalence class of a generator of $E(\mcal N_\mathfrak R)$.The converse is clear by constuction of $\mcal N_\mathfrak R$. 
Since $\mcal N_\mathfrak R$ is connected, $E(\mcal N_\mathfrak R, v)$ is independent of the choice of $v=v_0$.

For the last assertion, recall that $E(\mcal N_\mathfrak R)\simeq \pi_1(|\mcal N_\mathfrak R|)$, and $\pi_1(|\mcal N_\mathfrak R|)\simeq \pi_1(|\mcal B^+(\mcal Q_\mathfrak R)|)$ by Lemma~\ref{lem:2.0}. But, as noted in Remark~\ref{rem:3.6}, $\partial^+\mcal P_\mathfrak R$ is the underlying space of $\mcal B^+(\mcal Q_\mathfrak R)$ . Similarly, $E(\Ner \mcal B^+(\mcal C_\mathfrak R))\simeq \pi_1(\partial^+\mcal C_\mathfrak R)$, and since the edge path group of a simplicial complex only depends on its $2$-skeleton, $E(\mcal N_\mathfrak R)= E(\Ner \mcal B^+(\mcal C_\mathfrak R))$ when $\rho(Z)\geq 4$.
\end{proof}

Standard results in Algebraic Topology give the following description of $E(\mcal N_\mathcal R)$. The $1$-skeleton of $\mcal N_\mathfrak R$ is a connected combinatorial graph. Let  $T_\mathfrak R$ be a \emph{spanning tree}, i.e.~an edge path $(v_0, \dots, v_n)$ containing all vertices of $\mcal N_\mathfrak R$ but no cycle, and let $A$ be the set of edges of $\mcal N_{\mathfrak R}$ that are not in $T_\mathfrak R$. Then, $E(\mcal N_\mathfrak R)$ is the set of equivalence classes of the group freely generated by the \emph{fundamental cycles} $\{\gamma_{i,j}\mid \{v_i, v_j\}\in A\}$, where $\gamma_{i,j}$ is the class of the cycle obtained by adding $\{v_i, v_j\}\in A$ to $T_{\mathfrak R}$.

\begin{cor}\label{cor:4.2} If $\dim \mcal C_\mathfrak R= 3$, $E(\mcal N_\mathfrak R)$ is either trivial or generated by a single element $\gamma$. 
\end{cor}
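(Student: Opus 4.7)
The plan is to unwind the definitions in the low-dimensional case and reduce the statement to a topological observation about subsets of the circle. By Corollary~\ref{cor:4.1} combined with Lemma~\ref{lem:2.0}, we have $E(\mcal N_\mathfrak R)\simeq \pi_1(|\mcal N_\mathfrak R|)\simeq \pi_1(\partial^+\mcal P_\mathfrak R)$, so it suffices to show that $\partial^+\mcal P_\mathfrak R$ is homotopy equivalent either to a point or to $S^1$.

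First I would fix a base $\mcal P_\mathfrak R$ of $\mcal C_\mathfrak R$ as in Notation~\ref{nt:2.1}. Since $\dim \mcal C_\mathfrak R=3$, the polytope $\mcal P_\mathfrak R$ is $2$-dimensional and its topological boundary $\partial \mcal P_\mathfrak R$ is homeomorphic to $S^1$. By construction in Proposition~\ref{pro:3.1}(4), the subset $\partial^+\mcal C_\mathfrak R$ is purely $(\rho(Z)-1)=2$-dimensional, and consequently $\partial^+\mcal P_\mathfrak R=\coprod_{i\in \mcal B^+(I)}\mcal Q_i$ is a purely $1$-dimensional closed subset of the circle $\partial \mcal P_\mathfrak R$, i.e.\ a finite union of closed arcs.

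Next I would invoke the standing assumption (justified in the paragraph following Definition~\ref{dfn:4.2}) that $\mcal N_\mathfrak R$ is connected; by Lemma~\ref{lem:2.0} this is equivalent to $\partial^+\mcal P_\mathfrak R$ being connected. The key topological observation is then immediate: a connected closed subset of $S^1$ which is a finite union of closed arcs is either a single closed arc (hence contractible) or the whole circle $S^1$. In the first case $\pi_1(\partial^+\mcal P_\mathfrak R)=0$ and $E(\mcal N_\mathfrak R)$ is trivial; in the second case $\pi_1(\partial^+\mcal P_\mathfrak R)\simeq \Z$, generated by a single class $\gamma$.

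There is no serious obstacle here, since the statement is essentially a topological corollary of the machinery already set up: the only point that deserves care is verifying that $\partial^+\mcal P_\mathfrak R$ is indeed $1$-dimensional and a subset of the topological circle $\partial\mcal P_\mathfrak R$ (rather than, say, meeting the interior of $\mcal P_\mathfrak R$), which follows from the fact that elements of $\partial^+\mcal C_\mathfrak R$ are non-big and that interior points of $\mcal P_\mathfrak R$ represent big divisors under the cone/base correspondence.
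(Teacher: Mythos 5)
Your proof is correct and follows essentially the same route as the paper: identify $E(\mcal N_\mathfrak R)$ with $\pi_1(\partial^+\mcal P_\mathfrak R)$, observe that $\partial\mcal P_\mathfrak R\simeq S^1$ when $\dim\mcal C_\mathfrak R=3$, and note that the (connected, pure $1$-dimensional) set $\partial^+\mcal P_\mathfrak R$ is therefore a single arc or all of $S^1$. The paper's proof is terser, stating only the dichotomy $\partial^+\mcal P_\mathfrak R\subsetneq\partial\mcal P_\mathfrak R$ versus $\partial^+\mcal P_\mathfrak R=\partial\mcal P_\mathfrak R$; you have simply made explicit the two ingredients it leaves implicit (the standing connectedness assumption and the fact that non-big classes in a base of the cone land on $\partial\mcal P_\mathfrak R$).
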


\begin{proof} When $\dim \mcal C_\mathfrak R= 3$, $\partial \mcal P_\mathfrak R$ is homeomorphic to $S^1$, and $E(\mcal N_\mathfrak R)$ is trivial when $\partial^+\mcal P_{\mathfrak R}\subsetneq \partial\mcal P_\mathfrak R$ and generated by an element $\gamma$ otherwise. The path $\gamma$ is naturally dual to the complex $\mcal B^+(\mcal Q_\mathfrak R)$. 
\end{proof}
\begin{exa}\label{exa:4.1'} Recall the setting of Example~\ref{exa:4.1}.
The group $E(\mcal N_\mathfrak R)$ is generated by the unique fundamental cycle through all vertices, and there is a unique elementary relation of the Sarkisov program dominated by $Z$, which is the relation \eqref{eq:1.2} mentionned in the introduction.\end{exa}
\begin{conv}
So far, I always assumed that $\dim \mcal C_\mathfrak R\geq 2$. When $\mcal C_\mathfrak R$ is $2$-dimensional, the situation is that studied in the $2$-ray game, and there is at most one Sarkisov link: $\mcal N_\mathfrak R$ consists of at most one vertex.  Set $E(\mcal N_\mathfrak R)$ to be the trivial group when $\dim \mcal C_\mathfrak R=2$.
\end{conv}

\subsection{Relations on $3$-dimensional rings}
In this section, I give a geometric description of $E(\mcal N_\mathfrak R)$ when $\rho(Z)=3$.  
There is a fairly explicit description of $2$-ray configurations on varieties $Z$ with $\rho(Z)=2$ and rational polyhedral pseudoeffective cone. This leads to the definition of $4$ types of elementary Sarkisov links. When $\rho(Z)=3$, there is no such description: any attempt at a \emph{classification} elementary relations in the Sarkisov Program is essentially qualitative.

 \begin{dfn}\label{dfn:4.3} 
 Let $\gamma= (v_0, \dots, v_n)$ be an edge path on $\mcal N_\mathfrak R$, and denote by $\overline{\mcal A_i}= v_i^*$, and by $Z\lto S_i$ the ample model associated to $\mcal A_i$. 
Assume that $\rho(S_0)=\min \{\rho(S_j)\}$, and set $S= S_0$ and $\rho=\rho(S_0)$.\\
The path $\gamma$ is \emph{of type A} if for some $n_1<n_2< n$ or $n_1=n_2=n$:
\begin{itemize} \item[] $\{v_{j-1}, v_j\}$ is of type II or IV unless $j=n_1$ or $n_2$, 
\item[] $\{v_{n_1-1},v_{n_1}\}$ is of type I, and $\{v_{n_2-1}, v_{n_2}\}$ is of type III.
\end{itemize}
Then, $\rho(S_j)= \rho+1$ for $n_1\leq j\leq n_2-1$, and $\rho(S_j)= \rho$ otherwise.\\
The path $\gamma$ is \emph{of type B} if there are $n_1<n_2< n$ with:
\begin{itemize} \item[] $\{v_{j-1}, v_j\}$ is of type II or IV for $j<n_1$ and $j>n_2$, \item[] $\{v_{n_1-1},v_{n_1}\}$ is of type I and $\{v_{n_2-1}, v_{n_2}\}$ of type III, \item[] $(v_{n_1}, \dots, v_{n_2-1})$ is the product of finitely many paths of type A.
\end{itemize} 
Then, $\rho(S_j)= \rho+1$ or $\rho+2$ for $n_1\leq j\leq n_2-1$, and $\rho(S_j)= \rho$ otherwise. 

\end{dfn}
\begin{cor}\label{cor:4.3} Assume that $\dim \mcal C_\mathfrak R=3$. A non-trivial edge loop $(v_0, \dots, v_n)\in E(\mcal N_\mathfrak R)$ is the composition of finitely many paths of type A or B.
An elementary relation $L_{n,n-1}\circ \dots \circ L_{1,0}$ of the Sarkisov Program dominated by $Z$ is the composition of finitely many chains of elementary links of type A or B.
\end{cor}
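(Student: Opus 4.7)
The plan is to invoke Corollary~\ref{cor:4.2} to reduce to a single generator of $E(\mcal N_\mathfrak R)$, and then analyse that generator by tracking the Picard ranks of the fibration bases $S_j$ along the loop.

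By Corollary~\ref{cor:4.2}, when $\dim \mcal C_\mathfrak R = 3$ the group $E(\mcal N_\mathfrak R)$ is either trivial, in which case there is nothing to prove, or infinite cyclic, generated by the unique simple edge loop $\gamma$ tracing once around $\partial^+ \mcal P_\mathfrak R \simeq S^1$. Any non-trivial edge loop is then, up to equivalence, a concatenation of copies of $\gamma^{\pm 1}$, so it suffices to decompose $\gamma$ itself. Write $\gamma = (v_0, \dots , v_n)$ with $v_0 = v_n$ and let $\vphi_j \colon Z \dashto X_j \to S_j$ be the Mori fibre space associated to $v_j$. Since $\rho(Z) = \dim \mcal C_\mathfrak R = 3$ and $X_j$ is the result of a $K_Z$-MMP with $\rho(X_j/S_j) = 1$, each base satisfies $\rho(S_j) \in \{0, 1, 2\}$. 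By Definition~\ref{dfn2.6} and Corollary~\ref{cor:3.1}, the link attached to an oriented edge $\{v_{j-1}, v_j\}$ changes the base Picard rank by $+1$, $-1$ or $0$ according as it is of type I, type III, or type II/IV.

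After cyclically reindexing the loop, which preserves its class in $E(\mcal N_\mathfrak R)$, assume that $v_0$ attains $\rho := \min_j \rho(S_j)$, and let $0 = j_0 < j_1 < \dots < j_k = n$ be the indices with $\rho(S_{j_i}) = \rho$. Then $\gamma$ is the concatenation of the subpaths $\gamma_i := (v_{j_{i-1}}, \dots, v_{j_i})$, and I claim each $\gamma_i$ is of type A or type B. If $\gamma_i$ is a single edge, its endpoints share the same Picard rank, so the edge is of type II or IV, matching the degenerate case $n_1 = n_2 = n$ of Definition~\ref{dfn:4.3}(A). Otherwise $\gamma_i$ leaves the minimum at its first step and returns only at the last, so its first link is of type I and its last of type III, and its interior vertices have Picard rank $\rho + 1$ or $\rho + 2$; no higher value is possible since $\rho + 2 \leq 2$. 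If all interior vertices sit at level $\rho + 1$, then $\gamma_i$ is of type A. Otherwise, applying the same splitting argument one level higher to the interior of $\gamma_i$ exhibits it as a concatenation of type A subpaths at base level $\rho + 1$ (either degenerate single type II/IV edges or genuine excursions to level $\rho + 2$), matching the structure required by Definition~\ref{dfn:4.3}(B).

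The second assertion is immediate from Corollary~\ref{cor:4.1} and Lemma~\ref{lem:4.1}: the bijection between edge paths on $\mcal N_\mathfrak R$ and chains of Sarkisov links dominated by $Z$ transports the above decomposition of $\gamma$ into the desired decomposition of $L_{n,n-1} \circ \dots \circ L_{1,0}$ as a composition of chains of type A or B. The principal bookkeeping obstacle is verifying the effect of each of the four Sarkisov link types on the base Picard rank and correctly accommodating the degenerate case $n_1 = n_2 = n$ of Definition~\ref{dfn:4.3}, which is precisely what allows a single type II/IV edge to qualify as a type A path and makes the splitting above exhaustive.
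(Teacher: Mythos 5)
Your proof is correct and takes essentially the same approach as the paper's: reduce via Corollary~\ref{cor:4.2} to a single simple generator of $E(\mcal N_\mathfrak R)$, track the Picard ranks $\rho(S_j)$ of the fibration bases along the loop (noting that types I and III shift the rank by $\pm 1$ while II and IV leave it fixed), and cut the loop at vertices attaining the minimum level $\rho$. The paper organizes this as a two-case split on whether $\rho\geq 1$ (giving type A decompositions directly) or $\rho=0$ (giving type B), whereas you handle both cases uniformly by a recursive splitting one level up; this is the same underlying idea, and your version is arguably a bit cleaner since it does not implicitly assume that every excursion from level $0$ reaches level $2$. One small point both your argument and the paper gloss over: because you split the loop exactly at every minimum-level vertex, your segments $\gamma_i$ have $n_1=1$ and $n_2=n$, which violates the literal constraint $n_1<n_2<n$ in Definition~\ref{dfn:4.3}, and the single-edge case $n_1=n_2=n$ forces the lone edge to be simultaneously of type I and III as written. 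Both read most naturally as minor imprecisions in the definition (allowing $n_2\leq n$ and dropping the I/III conditions in the degenerate case) rather than as gaps in the decomposition argument, and the paper's own proof relies on the same tacit reading.
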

\begin{proof}Let $X_j/S_j$ be the Mfs associated to each vertex $v_j$ and assume that $\rho(S_0)=\rho= \min \{\rho(S_j)\}$. 
As noted above, $E(N_\mathfrak R)$ is non trivial precisely when $\partial^+ \mcal P_\mathfrak R= \partial P_\mathfrak R$ is homeomorphic to $S^1$. Consider $\partial \mcal P_\mathfrak R$ equipped with the triangulation induced by the decomposition \eqref{eq:3.2}; then $\partial \mcal P_\mathfrak R$ is a simplicial complex dual to $\mcal N_\mathfrak R$. 
There are two cases. If $S_0$ is not a point, $\rho\geq1$ and the generator of $E(\mcal N_\mathfrak R)$ (i.e.~the dual of $\partial \mcal P_\mathfrak R$) is the composition of a finite number of edge paths of the form $L_1\cup L_2 \cup L_3$,
where $L_1,L_2$ and $L_3$ are connected, and for each $v_i\in L_1\cup L_3$ (resp. $v_i\in L_2$), $\rho(S_i)=1$ (resp. $\rho(S_i)=2$); $L_1\cup L_2\cup L_3$ is of type A.

If $\rho=0$, the generator of $E(\mcal N_\mathfrak R)$ is the composition of finitely many edge paths of the form $L_1\cup L_2 \cup L_3$,
where each $L_i$ is connected, $L_2$ is a path of type A, and $S_i$ is a point (resp. $\rho(S_i)=1$ or $2$) for all $v_i\in L_1\cup L_3$ (resp. $v_i\in L_2$); $L_1\cup L_2\cup L_3$ is of type B. 
\end{proof}

\begin{cor}\label{cor:4.4} 
Assume that $\dim \mcal C_\mathfrak R= \rho\geq 3$ and let $\mcal A$ be a cone in \eqref{eq:3.2} of dimension $\rho-3$ that contains no big divisor. 
Then, $E(\resi_\mcal A \mcal N_\mathfrak R)$ is trivial or is the free group generated by an edge-loop $(v_0, \dots , v_n)$ which is the composition of finitely many chains of type A or B.
\end{cor}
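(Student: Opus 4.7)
\textbf{Proof plan for Corollary~\ref{cor:4.4}.}
The plan is to reduce the statement to Corollary~\ref{cor:4.3} by passing to the residual ring $\resi_\mcal A \mathfrak R$, whose support turns out to be three-dimensional, and then transporting the conclusion via the identification of complexes given by Definition-Lemma~\ref{dfnlm:3.1}.

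First, I would apply Definition-Lemma~\ref{dfnlm:3.1} to $\mcal A$: since $\dim \mcal A = \rho - 3$, there is a $(\rho-1-(\rho-3))=2$-dimensional affine subspace $H_\mcal A$ in general position with respect to $\mcal C_\mathfrak R$ such that $\resi_\mcal A \mathfrak R = \mathfrak R_{H_\mcal A}$. By Proposition~\ref{pro:3.3}, the support $\mcal C_{\resi_\mcal A \mathfrak R}$ is a rational polyhedral cone of dimension $3$, and the cones $\mcal A''_i \neq \{0\}$ in its induced decomposition are in bijection with the objects $\overline{\mcal A_i}$ of $\resi_\mcal A \mcal C_\mathfrak R$ distinct from $\overline{\mcal A}$, the correspondence being $\mcal A''_i = \mcal A_i \cap H_\mcal A$.

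Next, I would check that this bijection identifies the simplicial complexes $\resi_\mcal A \mcal N_\mathfrak R$ and $\mcal N_{\resi_\mcal A \mathfrak R}$. The hypothesis $\mcal A\cap \B(Z) = \emptyset$ combined with general position of $H_\mcal A$ ensures that a face $\mcal A''_i$ lies on $\partial^+ \mcal C_{\resi_\mcal A \mathfrak R}$ precisely when the corresponding $\mcal A_i$ lies on $\partial^+ \mcal C_\mathfrak R$; Proposition~\ref{pro:3.3} then says that the codimensions of these faces, and hence the dimensions of the dual simplices in the respective nerves, agree. Furthermore, by Corollary~\ref{cor:3.1} the type (I-IV) of an edge is determined by which of the ample models at its endpoints and at its dual face coincide, and these identifications are preserved by restriction to $H_\mcal A$. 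Since Definition~\ref{dfn:4.3} defines types A and B purely combinatorially, in terms of the sequence of link-types and the minimal rank among the $\rho(S_j)$ along the path, types A and B descend unchanged to the residual setting.

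Finally, I would apply Corollary~\ref{cor:4.3} to the three-dimensional ring $\resi_\mcal A \mathfrak R$: the edge-path group $E(\mcal N_{\resi_\mcal A \mathfrak R})$ is either trivial or generated by a single edge-loop which decomposes as a composition of finitely many paths of type A or B. Via the identification established in the preceding paragraph, the same conclusion transfers to $E(\resi_\mcal A \mcal N_\mathfrak R)$, which is what is claimed. The main obstacle is the bookkeeping of the second step: one must check carefully that the locus $\partial^+$ and the distinctions of types I-IV (and thus A and B) are genuinely intrinsic to the combinatorics that survives under restriction to $H_\mcal A$. This boils down to the fact that $H_\mcal A$ is in general position, so the convex-geometric data (dimension, containment, bigness) of cones is faithfully recorded by their intersections with $H_\mcal A$; once this is unpacked, no further geometric input is needed and the reduction to Corollary~\ref{cor:4.3} is immediate.
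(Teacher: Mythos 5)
Your proposal follows essentially the same route as the paper: reduce to the three-dimensional support via Definition-Lemma~\ref{dfnlm:3.1}, note that Corollary~\ref{cor:4.2} forces $E(\resi_\mcal A \mcal N_\mathfrak R)$ to be trivial or cyclic, and then reuse the type A/B analysis of Corollary~\ref{cor:4.3}. The one place your account is lighter than the paper's is that you handle the transfer of types A and B by invoking the combinatorial invariance of Definition~\ref{dfn:4.3} under restriction to $H_\mcal A$, whereas the paper appeals to Lemma~\ref{lem:3.4} and says the proof of Corollary~\ref{cor:4.3} applies ``after replacing point by $S$'' --- the point being that every elementary contraction appearing in the residual complex factors through the ample model $Z\lto S$ of $\mcal A$, so the dichotomy driving the proof of Corollary~\ref{cor:4.3} (minimal base equal to or strictly above the minimum) is relativised over $S$. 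Your remark that types A/B are defined intrinsically in terms of the sequence of $\rho(S_j)$ and the minimal value does capture this, but it would be cleaner to cite Lemma~\ref{lem:3.4} explicitly, since that is the lemma guaranteeing that the ample models, their fibrations, and the link types you read off from $\mcal N_{\resi_\mcal A \mathfrak R}$ really are the ones occurring in the relative MMP over $S$ rather than artefacts of the ambient complex. With that citation added your argument and the paper's are the same.
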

\begin{proof}
Recall from Definition-Lemma~\ref{dfnlm:3.1} that $\resi_\mcal A \mathfrak R$ is a finitely generated ring with $3$-dimensional support. By Corollary~\ref{cor:4.2}, $E(\resi_\mcal A\mcal N_\mathfrak R)$ is trivial or generated by a single edge loop $(v_0, \dots, v_n)$. Denote by $Z\lto S$ the ample model associated to the cone $\mcal A$. By Lemma~\ref{lem:3.4}, the proof of Corollary~\ref{cor:4.3} applies to this situation after replacing ``point" by $S$ because all relevant contractions occur over $S$.
\end{proof}
The goal of Example~\ref{exa:4.2} is to show that both types of relations in the Sarkisov Program are realized among the end products of the MMP on a terminal $\Q$-factorial Fano $3$-fold $Z$ with $\rho(Z)=3$. Let $\{X_i/S_i\}$ be the Mori fibre spaces that are results of the MMP on $Z$. The $3$-fold $Z$ admits no small contraction and is a common resolution of all $X_i$. If $D_1, D_2, D_3$ is a basis of $\Effb(Z)$ and $A\sim_{\Q}{-}K_Z$, the ring $\mathfrak R(K_Z+A+D_1, K_Z+A+D_2, K_Z+A+D_3)$ is finitely generated and has support $\Effb_\R(Z)$. Then, $Z$, $\mathfrak R$ are associated to the collection of $X_i/S_i$ and Sarkisov links dominated by $Z$.  
\begin{exa}\label{exa:4.2}
The methods of \cite{MM86} can be used to classify the polytopes $\mcal P_\mathfrak R$ and complexes $\mcal Q_\mathfrak R$. The numbers between parentheses are those in the classification of \cite{MM82}, and $n$ is the number of chambers of maximal dimension in the coarsest decomposition \eqref{eq:3.2}. In the figures, the faces $\mcal Q_i$ of dimension $1$ and $2$ are labelled by the associated ample models except when this model is $Z\lto \{P\}$. 
\begin{enumerate}
\item[Case 1:] $Z$ admits no E$1$-contraction to a Fano $3$-fold (Figure 3). 
Either $Z$ admits no E$1$ contraction and $n=1$ (1,27), or $Z$ admits an E$1$-contraction to a weak Fano $3$-fold and $n=3$ (2,31). The relation is of type A. 
\begin{figure}[h]\begin{center}
\vspace{-10pt}
\begin{pspicture}[psscale=.5](-1,-0.5)(9,3.2)
\psset{unit=0.75cm}
\psdots[dotscale=1](0,0)(2,3)(4,0)
\pspolygon(0,0)(2,3)(4,0)
\rput[t](2,-0.2){$\PS^1\times \PS^1$}
\rput[br](0.8,1.5){$\PS^1\times \PS^1$}
\rput[bl](3.1,1.5){$\PS^1\times \PS^1$}
\rput[b](2,3.15){$\PS^1$}
\rput[tr](0, -0.2){$\PS^1$}
\rput[tl](4,-0.2){$\PS^1$}
\rput(2,1){$Z$}
\pspolygon(7,0)(9,3)(11,0)
\psline[linewidth=0.4pt](7,0)(9,1)(9,3)
\psline[linewidth=0.4pt](9,1)(11,0)
\rput[t](9,-0.2){$\PS^1\times \PS^1$}
\rput[br](7.8,1.5){$\PS^1$}
\rput[bl](10.1,1.5){$\PS^1$}
\rput[b](9,3.15){$P$}
\rput[tr](7, -0.2){$\PS^1$}
\rput[tl](11,-0.2){$\PS^1$}
\rput(9,0.5){$Z$}
\rput(8,1){$X_1$}
\rput(10,1){$X_2$}
\end{pspicture}
\end{center}
\vspace{-10pt}
\caption{}
\vspace{-10pt}
\end{figure}

\item[Case 2:] $Z$ is not primitive and admits a structure of Mori fibre space (Figure 4). There are $3$ cases:  $n=2$ (4,28), $n=3$ (3,8,17,24) and $n=4$ (6,10,25,30). Here, $\rho(X_i)=2$ and $\rho(V)=1$. The relation is of type A in the first two cases, and of type $B$ in the last one. 

\begin{figure}[h]\vspace{-10pt}
\begin{center}
\begin{pspicture}[psscale=.5](-1,-3.3)(10,2.5)\psset{unit=0.75cm}
\psdots[dotscale=1](0,0)(1,2)(4,2)(5,0)
\pspolygon(0,0)(1,2)(4,2)(5,0)
\psline[linewidth=0.4pt](1,2)(5,0)
\rput[br](0.5,1){$\PS^1$}
\rput[bl](4.7,1){$\F_1$}
\rput[b](2.5,2.2){$\PS^1\times \PS^1$}
\rput[t](2.5,-0.2){$\PS^2$}
\rput(1.5,0.75){$X_1$}
\rput(3.5,1.25){$Z$}
\psdots[dotscale=1](7,0)(8,2)(11,2)(12,0)(9.5,0)
\pspolygon(7,0)(8,2)(11,2)(12,0)
\psline[linewidth=0.4pt](8,2)(9.5,0)(11,2)
\rput[t](9.5,-0.2){$\PS^2$}
%\rput[tr](7, -0.2){$P$}
%\rput[tl](12,-0.2){$P$}
\rput(9.5,1){$Z$}
\rput(8.25,0.8){$X_1$}
\rput(10.75,0.8){$X_2$}
\rput[br](7.5,1){$\PS^1/\PS^2$}
\rput[bl](11.7,1){$\PS^1$}
\rput[b](9.5,2.1){$\PS^1\times \PS^1/\F_1$}
\pspolygon(3.5,-3)(4.5,-1)(7.5,-1)(8.5,-3)
\psline[linewidth=0.4pt](4.5,-1)(6,-2.25)(7.5,-1)
\psline[linewidth=0.4pt](3.5,-3)(6,-2.25)(8.5,-3)
\rput[br](4,-2){$\PS^1/\PS^2$}
\rput[bl](8.2,-2){$\PS^1$}
\rput[b](6,-0.9){$\PS^1\times \PS^1/\F_1$}
\rput[t](6,-3.2){$P$}
\rput(6, -2.7){$V$}
\rput(6, -1.5){$Z$}
\rput(5,-2){$X_1$}
\rput(7,-2){$X_2$}
\end{pspicture}
\end{center}\vspace{-10pt}
\caption{}
\vspace{-10pt}
\end{figure}
\item[Case 3:] $Z$ is not primitive and has no Mori fibre space structure (Figure 5). There are $2$ cases with $n=5$ that correspond to whether the interior chambers yield ample models with $\rho=2$ only (7,13) or not (11,12,15,16,20,26,22), one with $n=6$, and one with $n=8$. Again, $\rho(X_i)= 2$ and $\rho(V_j)=1$.  The relation is always of type A. 
\begin{figure}[h]\begin{center}
\vspace{-10pt}
\begin{pspicture}[psscale=.5](-1,-0.5)(9.5, 3)\psset{unit=0.75cm}
\psdots[dotscale=1](0,0)(1,1.5)(2,3)(3,1.5)(2,0)(4,0)
\pspolygon(0,0)(2,3)(4,0)
\pspolygon[linewidth=0.4pt](2,0)(3,1.5)(1,1.5)
\rput[t](2,-0.2){$\PS^1/\PS^2$}
\rput[br](0.8,1.5){$\PS^2$}
\rput[bl](3.3,1.5){$\PS^2$}
\rput(2,1){$Z$}
\rput(2,2){$X_1$}
%\rput[b](2,3.15){$P$}
%\rput[tr](0, -0.2){$P$}
%\rput[tl](4,-0.2){$P$}
\rput(1,0.5){$X_2$}
\rput(3,0.5){$X_3$}
\psdots[dotscale=1](7,0)(8,1.5)(9,3)(10,1.5)(11,0)
\psline(7,0)(9,3)(11,0)(7,0)
\psline[linewidth=0.4pt](7,0)(10,1.5)(8,1.5)(11,0)
%\rput[t](9,-0.2){$P$}
\rput[br](7.8,1.5){$\PS^2/\PS^1$}
\rput[bl](10.4,1.5){$\PS^2$}
%\rput[b](9,3.15){$P$}
\rput(9,0.5){$V$}
\rput(9,1.3){$Z$}
\rput(9,2){$X_1$}
\rput(8.2,1){$X_2$}
\rput(9.8,1){$X_3$}
\end{pspicture}
\end{center}
\begin{center}
\begin{pspicture}[psscale=.5](-1,-0.5)(9.5, 3)\psset{unit=0.75cm}
\psdots[dotscale=1](0,0)(2,3)(4,0)
\pspolygon(0,0)(2,3)(4,0)
\pspolygon[linewidth=0.4pt](2,0)(1.7,1.5)(2.3,1.5)
\pspolygon[linewidth=0.4pt](2,3)(1.7, 1.5)(2.3,1.5)
\psline[linewidth=0.4pt](0,0)(1.7,1.5)
\psline[linewidth=0.4pt](2.3,1.5)(4,0)
\rput[t](2,-0.2){$\PS^2/\PS^1$}
%\rput[br](0.8,-3){$P$}
%\rput[bl](3.1,-3){$P$}
\rput(1,0.5){$X_1$}\rput(3,0.5){$X_2$}
\rput(2,1){$Z$}\rput(2,2){$X_3$}
\rput(1.3,1.5){$V_1$}\rput(2.7,1.5){$V_2$}
\psdots[dotscale=1](7,0)(7,3)(9,3)(11,3)(11,0)
\psframe(7,0)(11,3)
\psline[linewidth=0.4pt](7,0)(9,3)(11,0)
\psline[linewidth=0.4pt](7,0)(9.5,1.75)(11,3)
\psline[linewidth=0.4pt](7,3)(8.5,1.75)(11,0)
%\rput[r](6.8,-2.5){$P$}
\rput(7.5,1.75){$V_1$}
%\rput[l](11.2,-2.5){$P$}
\rput(10.5,1.75){$V_2$}
\rput(8,2.5){$X_1$}\rput(10,2.5){$X_2$}
%\rput[b](9,-1.35){$P$}
\rput(9,2){$Z$}\rput(9,0.5){$V_3$}%\rput[t](9,-4.7){$P$}
\rput(8.25,1.25){$X_3$}\rput(9.75,1.25){$X_4$}
\end{pspicture}
\end{center}\vspace{-10pt}
\caption{}
\vspace{-10pt}
\end{figure}
\end{enumerate}
\end{exa}
\begin{rem} \label{rem:4.3}Example~\ref{exa:4.2} shows that both types of elementary relations in the Sarkisov Program occur among the end products of the MMP of Fano $3$-folds with $\rho=3$. Note that the end products of the MMP on a Picard rank $3$ del Pezzo surface only give an example of a relation of type A (see Example~\ref{exa:4.1'} and \eqref{eq:1.2}). Since (1) and (25) in the classification of \cite{MM82} are toric, both types of relations actually occur among the end products of the MMP on \emph{toric Fano $3$-folds}. Last, $\PS^3/\{P\}$ is among the end products of the MMP on (11) and of (25), and this shows that both types of relations are realized in the Cremona group of $\PS^3$.   
\end{rem}
\subsection{General case} I now make precise the statement that relations in the Sarkisov Program are \emph{generated} by relations of the type occuring among the end products of the MMP on Picard rank $3$ Fano $3$-folds. 
\begin{thm}\label{thm:4.1}
The edge path group $E(\mcal N_{\mathfrak R})$ is generated by edge loops $\{\gamma_\mcal A \mid E(\resi_{\mcal A}\mcal N_\mathfrak R)= \Z[\gamma_{\mcal A}] \}$ as $\mcal A$ ranges over $(\rho(Z)-3)$-dimensional cones in \eqref{eq:3.2} that contain no big divisor.  
\end{thm}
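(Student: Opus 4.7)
My plan is to deduce the theorem from a Van Kampen--type argument applied to a covering of $|\mcal N_\mathfrak R|$ by the geometric realisations of the residual subcomplexes. By Corollary~\ref{cor:4.1} we have $E(\mcal N_\mathfrak R)\simeq \pi_1(|\mcal N_\mathfrak R|)$, and since the fundamental group of a simplicial complex depends only on its $2$-skeleton, it suffices to produce a cover where every $0$-, $1$-, and $2$-simplex of $\mcal N_\mathfrak R$ lies in some $|\resi_\mcal A \mcal N_\mathfrak R|$ with $\dim \mcal A=\rho(Z)-3$ and $\mcal A\cap \B(Z)=\emptyset$.

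The first step is to verify this covering. Since bigness is an open condition on $N^1(Z)_\R$, the locus $\partial^+ \mcal C_\mathfrak R$ is closed in $\mcal C_\mathfrak R$, so every face of a cone in $\partial^+ \mcal C_\mathfrak R$ lies again in $\partial^+ \mcal C_\mathfrak R$. Under the nerve duality for $\mcal N_\mathfrak R= \Ner \mcal B^+(\mcal Q_\mathfrak R)$, a $2$-simplex is dual to a cone $\mcal A$ in $\partial^+\mcal C_\mathfrak R$ of dimension $\rho(Z)-3$, and hence automatically lies in $\resi_\mcal A \mcal N_\mathfrak R$; an edge (resp.~vertex) is dual to a cone of dimension $\rho(Z)-2$ (resp.~$\rho(Z)-1$) in $\partial^+ \mcal C_\mathfrak R$, any codimension-$1$ (resp.~codimension-$2$) face of which is a $(\rho(Z)-3)$-dimensional cone in $\partial^+ \mcal C_\mathfrak R$, furnishing the required $\mcal A$.

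The second step is to understand each $|\resi_\mcal A \mcal N_\mathfrak R|$. By Definition-Lemma~\ref{dfnlm:3.1} and Proposition~\ref{pro:3.3}, $\resi_\mcal A \mathfrak R$ is a finitely generated ring with $3$-dimensional support, corresponding to the Sarkisov Program for Mori fibre spaces over the ample model $\vphi\colon Z\lto W_\mcal A$ associated to $\mcal A$; applying the analog of Lemma~\ref{lem:4.1} in this relative setting shows that each $|\resi_\mcal A \mcal N_\mathfrak R|$ is path-connected. Corollary~\ref{cor:4.4} then tells us that $\pi_1(|\resi_\mcal A \mcal N_\mathfrak R|)$ is either trivial or the infinite cyclic group generated by $\gamma_\mcal A$.

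Finally, I would thicken each $|\resi_\mcal A \mcal N_\mathfrak R|$ to an open neighborhood in $|\mcal N_\mathfrak R|$ that deformation retracts onto it. A Van Kampen--type theorem then identifies $\pi_1(|\mcal N_\mathfrak R|)$ as being generated by the images of the $\pi_1$'s of these open neighborhoods, and hence by the $\gamma_\mcal A$'s. The main technical hurdle is the Van Kampen step itself, since pairwise intersections of residual subcomplexes need not be path-connected; I would handle this by fixing a base-point $v_0\in|\mcal N_\mathfrak R|$ and, for each $\mcal A$, enlarging $|\resi_\mcal A \mcal N_\mathfrak R|$ along a chosen edge path from $v_0$ to a vertex of $\resi_\mcal A \mcal N_\mathfrak R$. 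Conjugation by such connecting paths does not enlarge the generating set, so generation of $\pi_1(|\mcal N_\mathfrak R|)$ by the $\gamma_\mcal A$'s survives.
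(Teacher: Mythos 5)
Your proposal is correct and follows essentially the same strategy as the paper: cover $\mcal N_\mathfrak R$ (you use the $2$-skeleton, the paper uses the $1$-skeleton $\mcal N_\mathfrak R^{(1)}$, exploiting that $E(\mcal N_\mathfrak R)$ is a quotient of $E(\mcal N_\mathfrak R^{(1)})$) by the residual subcomplexes $\resi_\mcal A\mcal N_\mathfrak R$ associated to $(\rho(Z)-3)$-dimensional non-big cones, identify each $E(\resi_\mcal A\mcal N_\mathfrak R)$ via Corollary~\ref{cor:4.4}, and conclude by Van Kampen. The one place the paper hedges and you do not is the covering step: the paper introduces the index set $I''$ of $(\rho(Z)-2)$-dimensional cones with no $(\rho(Z)-3)$-dimensional non-big face, whereas you argue (correctly, since the decomposition \eqref{eq:3.2} is a fan and non-bigness is closed) that every such cone already has a $(\rho(Z)-3)$-dimensional non-big face, so $I''$ is empty.
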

\begin{cor}\label{cor:4.5}
Relations in the Sarkisov Program are generated by elementary relations. These are compositions of finitely many chains of type A or B, and there are examples of relations of type A and B induced by the MMP on nonsingular Fano $3$-folds of Picard rank $3$. 
\end{cor}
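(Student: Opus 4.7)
The theorem reduces, by Corollary~\ref{cor:4.1}, to showing that the edge path group $E(\mcal N_\mathfrak R)$ is generated by the loops $\gamma_\mcal A$ described in the statement, since every elementary relation in the Sarkisov Program dominated by $Z$ corresponds to an edge loop in $\mcal N_\mathfrak R$. Because $\pi_1$ of a simplicial complex depends only on its $2$-skeleton and, by construction of $\mcal N_\mathfrak R$, a $2$-simplex is dual precisely to a $(\rho(Z)-3)$-dimensional face of $\mcal B^+(\mcal C_\mathfrak R)$, the problem becomes one of generating $\pi_1(|\mcal N_\mathfrak R|)$ from the $2$-simplices and their adjacencies.

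By Definition-Lemma~\ref{dfnlm:3.1} applied with $d=\rho(Z)-3$, each residual ring $\resi_\mcal A \mathfrak R$ has $3$-dimensional support, so $\resi_\mcal A \mcal N_\mathfrak R$ is a $2$-dimensional simplicial complex to which Corollary~\ref{cor:4.4} applies: its edge path group is either trivial or infinite cyclic, generated by a single loop $\gamma_\mcal A$. My plan is to apply Van Kampen's theorem to the cover of $|\mcal N_\mathfrak R|$ by open neighbourhoods $\{U_\mcal A\}$ that deformation retract onto $|\resi_\mcal A \mcal N_\mathfrak R|$. The $2$-simplex $\sigma_\mcal A$ dual to $\mcal A$ lies in $\resi_\mcal A \mcal N_\mathfrak R$ (and, since $\mcal A'\subseteq \overline{\mcal A}$ with both of codimension $3$ forces $\mcal A'=\mcal A$, in no other codim-$3$ residual), so these open sets cover the $2$-skeleton; after thickening along the $1$-skeleton they cover all of $|\mcal N_\mathfrak R|$. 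Van Kampen then presents $\pi_1(|\mcal N_\mathfrak R|)$ as generated by the images of $\pi_1(U_\mcal A)\cong \pi_1(\resi_\mcal A\mcal N_\mathfrak R)$, which by Corollary~\ref{cor:4.4} are the $\gamma_\mcal A$.

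The main obstacle will be verifying the hypotheses of Van Kampen's theorem, in particular the path-connectedness of pairwise (and higher) intersections $U_\mcal A\cap U_{\mcal A'}$. A clean way to sidestep this is to argue directly on the combinatorial edge-path presentation: fix a spanning tree $T_\mathfrak R$ of the $1$-skeleton of $\mcal N_\mathfrak R$, express $E(\mcal N_\mathfrak R)$ by fundamental cycles modulo the boundary relations $\partial \sigma_\mcal A =1$ coming from the $2$-simplices, and then show inductively that every fundamental cycle can, via these boundary relations, be rewritten as a product of fundamental cycles each supported in a single $\resi_\mcal A \mcal N_\mathfrak R$; each such cycle is then a power of some $\gamma_\mcal A$ by Corollary~\ref{cor:4.4}. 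This combinatorial route has the advantage of making explicit the role played by each $2$-simplex of $\mcal N_\mathfrak R$.

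Corollary~\ref{cor:4.5} then follows at once: Theorem~\ref{thm:4.1} provides generation by the $\gamma_\mcal A$'s, Corollary~\ref{cor:4.4} together with Corollary~\ref{cor:4.3} classifies each $\gamma_\mcal A$ as a composition of chains of elementary Sarkisov links of type A or B, and Example~\ref{exa:4.2} together with Remark~\ref{rem:4.3} exhibit both types as elementary relations occurring among the end products of the MMP on nonsingular Fano $3$-folds of Picard rank $3$.
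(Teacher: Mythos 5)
Your overall strategy is the paper's: cover $\mcal N_\mathfrak R$ by residual complexes, invoke Van Kampen to get generation of the edge path group by the loops $\gamma_\mcal A$, and then conclude via Corollaries~\ref{cor:4.3}--\ref{cor:4.4}, Example~\ref{exa:4.2} and Remark~\ref{rem:4.3}; your final paragraph assembles the corollary exactly as the paper does. The gap is in the covering step. You take only the residual complexes $\resi_\mcal A\mcal N_\mathfrak R$ for $\mcal A$ of dimension $\rho(Z)-3$, and you justify coverage by observing that each $2$-simplex dual to such an $\mcal A$ lies in $\resi_\mcal A\mcal N_\mathfrak R$. But the $2$-skeleton of $\mcal N_\mathfrak R$ is not the union of its $2$-simplices: as Remark~\ref{rem:2.1} warns, a face need not admit a dual simplex, so an edge of $\mcal N_\mathfrak R$ (dual to a $(\rho(Z)-2)$-dimensional non-big cone) need not be a face of any $2$-simplex, and such an edge is covered by your sets only if some $(\rho(Z)-3)$-dimensional non-big cone of the decomposition lies in the closure of its dual cone --- a fact you neither state nor prove. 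The paper handles precisely this by enlarging the cover: its proof covers the $1$-skeleton $\mcal N_\mathfrak R^{(1)}$ by the residual complexes of the cones in $I'$ (dimension $\rho(Z)-3$) \emph{and} in $I''$ (dimension $\rho(Z)-2$, not containing any such codimension-$3$ cone), notes that the $I''$ pieces contribute trivially by Corollary~\ref{cor:4.2}, and then uses that $E(\mcal N_\mathfrak R)$ is a quotient of $E(\mcal N_\mathfrak R^{(1)})$. Your proposed fix, ``thickening along the $1$-skeleton'', does not repair this: if a whole uncovered edge is absorbed into some $U_\mcal A$, that set no longer deformation retracts onto $|\resi_\mcal A\mcal N_\mathfrak R|$ and may acquire loops not of the form $\gamma_\mcal A^k$; if the edge bridges two different $U_\mcal A$'s, the union acquires generators coming from the nerve of the cover that no $\pi_1(U_\mcal A)$ accounts for (compare a circle covered by two contractible arcs).

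Your alternative ``combinatorial'' route does not close the gap either. Presenting $E(\mcal N_\mathfrak R)$ by fundamental cycles modulo the boundary relations of $2$-simplices is fine, but the asserted inductive step --- that every fundamental cycle can be rewritten, using only these relations, as a product of cycles each supported in a single $\resi_\mcal A\mcal N_\mathfrak R$ --- is essentially the statement of Theorem~\ref{thm:4.1} itself, and it is exactly where the edges not lying in any codimension-$3$ residual complex must be confronted; announcing the induction does not discharge it. To make your argument work you would either have to prove that every $(\rho(Z)-2)$-dimensional cone of \eqref{eq:3.2} containing no big divisor has in its closure a $(\rho(Z)-3)$-dimensional non-big cone of the decomposition (so that the codimension-$3$ residual complexes really do cover all edges), or, as the paper does, add the codimension-$2$ residual complexes of Definition-Lemma~\ref{dfnlm:3.1} to the cover and work on the $1$-skeleton, where generation descends to $E(\mcal N_\mathfrak R)$. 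You rightly flag the path-connectedness of intersections as the delicate hypothesis in Van Kampen, but sidestepping it by an unproved rewriting claim leaves the main point unestablished.
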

\begin{proof}
Let $I', I''\subset \mcal B^+(I)$ be such that
$I'= \{ i\in \mcal B^+(I) \mid \dim \mcal A_i= \rho(Z)-3\}$, and  
$I''= \{ i\in \mcal B^+(I) \mid \dim \mcal A_i= \rho(Z)-2, \mcal A_j\not \subset \overline{\mcal A_i}  \mbox{ for } j\in I'\}$. 
For each $i\in \mcal B^+(I)$ and cone $\mcal A_i$ in \eqref{eq:3.4}, let $\resi_{\mcal A_i} \mathfrak R$ be the $(\rho(Z)-d)$-dimensional residual ring as in Definition-Lemma~\ref{dfnlm:3.1}. Then, the $1$-skeleton of $\mcal N_\mathfrak R$ is:
 $$\mcal N_{\mathfrak R}^{(1)}= \bigcup_{i\in I'} \resi_{\mcal A_i} \mcal N_\mathfrak R \cup \bigcup_{i\in I''} \resi_{\mcal A_i}\mcal N_\mathfrak R.$$
By Corollary~\ref{cor:4.2}, $E(\resi_{\mcal A_i}\mcal N_\mathfrak R)$ is trivial for all $i\in I"$, and by Corollary~\ref{cor:4.3}, $E(\resi_{\mcal A_i}\mcal N_\mathfrak R)$ is either trivial or of the form $\Z[\gamma_{\mcal A_i}]$ for $i\in I'$. 
Since $\mcal N_\mathfrak R$ is connected, by the Van Kampen theorem, $E(\mcal N_\mathfrak R^{(1)})$ is generated by the groups $E(\resi_{\mcal A_i} \mcal N_\mathfrak R)$. 
Corollary~\ref{cor:4.5} then follows immediately from Corollary~\ref{cor:4.4}, Example~\ref{exa:4.2} and Remark~\ref{rem:4.3}. 
\end{proof}
\begin{rem}\label{rem:4.2}
Note that $E(\mcal N_\mathfrak R)$ is not in general \emph{freely generated} by the loops $\gamma_{\mcal A_i}$ when $\dim \mcal C_\mathfrak R>3$. The loops $\gamma_{\mcal A_i}$ generate the free group $E(\mcal N_\mathfrak R^{(1)})$. When $\dim \mcal C_\mathfrak R>3$, $\mcal N_{\mathfrak R}\neq \mcal N_\mathfrak R^{(1)}$, and the relations in $E(\mcal N_\mathfrak R)$ depend on the $2$-skeleton of $\mcal N_\mathfrak R$, that is by faces $\mcal A$ of dimension $\rho(Z)-4$.  
For example, there is a relation between $\gamma_{\mcal A_1}$ and $\gamma_{\mcal A_2}$ when $\overline{\mcal A_3}=\overline{\mcal A_1}\cap \overline{\mcal A_2}$ has dimension $\rho(Z)-4$, so that $\resi_{\mcal A_1}\mcal N_\mathfrak R$ and $\resi_{\mcal A_2}\mcal N_\mathfrak R$ both are subcomplexes of $\resi_{\mcal A_3}\mcal N_\mathfrak R$. 
In general, these relations between relations in the Sarkisov Program can be more complicated, but they are accounted for by divisorial rings with $4$-dimensional support. 
\end{rem}
\begin{exa}\label{exa:4.3}
To conclude, I determine the edge loops on $\mcal N_{\Effb S}$, for $S$ a smooth del Pezzo surface $S$ with $\rho(S)=4$. 
Let $S$ be the blow up of $\PS^2$ in $3$ points $\{P_1,P_2,P_3\}$; then $\mcal C_\mathfrak R=\Effb(S)=\Eff(S)$ is a rational polyhedral cone of dimension $4$.  Let $E_1, E_2, E_3$ be the $(-1)$-curves and $L_1, L_2,L_3$ the proper transforms of lines through two of the $3$ points $P_i$ on $\PS^2$. Let $S\to S_i$ (resp~$S\to T_i$) be the contraction of $E_i$ (resp~of $L_i$). 
The vertices of $\mcal N_{\mathfrak R}$ are:
\begin{itemize}
\item[] $(1)$ (resp.~$(14)$) corresponds to the ample model $S\lto \PS^2\to {pt}$ factoring through the three maps $S\lto S_i$ (resp.~$S\lto T_i$),
\item[] $(2,3,4)$ (resp.~$(11,12,13)$) correspond to the three ample models $S\lto \F_1\to \PS^1$ factoring through $S\lto S_i$ (resp. $S\lto T_i$),
\item[] $(5,6)$ (resp.~$(7,8)$, resp.~$(9,10)$) are the ample models $S\lto \PS^1\times \PS^1\to \PS^1$ factoring through $S\lto S_1$ and $S\lto T_1$ (resp.~$S_2, T_2$ and $S_3,T_3$).
\end{itemize}
Figure 6 illustrates the $1$-skeleton of $\mcal N_\mathfrak R$. 
\begin{figure}\begin{center}
\vspace{-10pt}
\xygraph{
!{<0cm,0cm>; <1cm,0cm>:<0cm,1cm>::}
!{(0,0) }*+{\bullet_{1}}="1"
!{(-1,1) }*+{\bullet_{2}}="2"
!{(1,.7) }*+{\bullet_{3}}="3"
!{(0.3,-1) }*+{\bullet_{4}}="4"
!{(-.8,2) }*+{\bullet_{5}}="5"
!{(1,1.8) }*+{\bullet_{6}}="6"
!{(-2.2,0.2) }*+{\bullet_{7}}="7"
!{(-1,-1) }*+{\bullet_{8}}="8"
!{(2,0.1) }*+{\bullet_{9}}="9"
!{(1.5,-1.2) }*+{\bullet_{10}}="10"
!{(3.5,-1.2) }*+{\bullet_{14}}="11"
!{(4.5,-2) }*+{\bullet_{11}}="12"
!{(3,0.1) }*+{\bullet_{12}}="13"
!{(2.8,-2) }*+{\bullet_{13}}="14"
"1"-"2" "1"-"3" "1"-"4"
"2"-"5" "2"-"7" "3"-"6" "3"-"9" "5"-"6" "4"-"8" "4"-"10" "7"-"8" "9"-"10" "9"-"13" "11"-"13" "11"-"14" "11"-"12" "10"-"14"
"5"-@/^1.8cm/"12" "6"-@/^/"13" "8"-@/_/"14" "12"-@/^1.45cm/"7" }
\end{center}
\vspace{-10pt}
\caption{}\vspace{-10pt}
\end{figure}
Here, there are $9$ faces of $\mcal B^+(\mcal C_\mathfrak R)$ of dimension $1$, $\mcal N_\mathfrak R= \cup_{i\in I'}\rest_{\mcal A} \mcal N_{\mathfrak R}$ and the associated cycles have the following interpretation:
\begin{itemize}
\item[] $(1,2,5,6,3)$ (resp. $(1,3,9,10,4)$, resp. $((1,2,7,8,4))$) corresponds to relations among the end products of the MMP on $S_1$ (resp.~on $S_2$, resp.~on $S_3$), 
\item[] $(12,9,10,13,14)$ (resp. $(11,5,6,12,14)$, resp. $((11,7,8,13,14)$) corresponds to relations among the end products of the MMP on $T_1$ (resp.~on $T_2$, resp.~on $T_3$), 
\item[] $(2,5,7,11), (3,9,12,6)$ and $(4,8,10,13)$ correspond to relations among the end products of the MMP of $S$ over each of the three $\PS^1$s that arise as ample models $S\lto \PS^1$ for some $D\in \Effb(S)$, 
\end{itemize}
As a by-product, this example shows how to recover all factorisations of the quadratic involution $\PS^2\dashto \PS^2$; these correspond to edge-loops through the $2$ vertices of $\mcal N_{\mathfrak R}$ associated to the $2$ distinct maps $S\lto \PS^2\to \{P\}$. 
\end{exa}

\bibliographystyle{amsalpha}
 \bibliography{biblio}
 \end{document}